\documentclass[11pt]{amsart}
\usepackage{amsmath,amssymb,amsthm,mathtools}
\usepackage{mathrsfs}
\usepackage{xcolor}
\usepackage[colorlinks=true,linkcolor=blue,citecolor=blue,urlcolor=blue]{hyperref}
\usepackage[margin=1.2in]{geometry}
\newtheorem{theorem}{Theorem}
\newtheorem{lemma}[theorem]{Lemma}
\newtheorem{proposition}[theorem]{Proposition}
\newtheorem{corollary}[theorem]{Corollary}
\theoremstyle{definition}
\newtheorem{definition}[theorem]{Definition}
\newtheorem{assumption}{Assumption}
\newtheorem{remark}[theorem]{Remark}
\newtheorem{example}[theorem]{Example}
\newcommand{\R}{\mathbb{R}}

\newcommand{\rmd}{\mathrm{d}}
\newcommand{\La}{\mathscr{L}_a}
\newcommand{\Ls}{\mathscr{L}_s}
\newcommand{\Lo}{\mathscr{L}_{\mathrm{o}}}
\newcommand{\Lgen}{\mathscr{L}}
\newcommand{\Am}{\mathscr{A}_m}
\newcommand{\Pv}{\Pi_v}
\newcommand{\Lmu}[1][\mu]{L^2(#1)}
\newcommand{\Lmuz}[1][\mu]{L^2_0(#1)}
\newcommand{\ip}[3][\mu]{\langle #2,\, #3\rangle_{L^2(#1)}}
\newcommand{\nrm}[2][\mu]{\lVert #2\rVert_{L^2(#1)}}
\newcommand{\mux}{\mu_x}
\newcommand{\II}{\mathrm{II}}
\DeclareMathOperator{\Ran}{Ran}
\DeclareMathOperator{\Spec}{Spec}
\DeclareMathOperator{\diam}{diam}

\DeclareMathOperator{\supp}{supp}

\providecommand{\dist}{\operatorname{dist}}
\providecommand{\reach}{\operatorname{reach}}

\title[Hypocoercivity for specularly reflected Langevin dynamics]{Sharp hypocoercive convergence estimates for underdamped Langevin dynamics with specular reflection}
\author{Hengrong Du}
\author{Qi Feng}
\author{Lingjiong Zhu}
\date{\today}
\begin{document}
\begin{abstract}
We study the underdamped (kinetic) Langevin dynamics confined to a bounded domain $\Omega\subset\mathbb{R}^d$ by specular reflection of the velocity at the boundary. This process is the natural momentum-based analogue of the normally reflected overdamped Langevin diffusion, and it is used in practice for constrained sampling.  Assuming only that the position marginal $\mu_x\propto e^{-U}$ satisfies a Poincar\'e inequality on $\Omega$ with constant $m>0$, $\nabla^2U\succeq-K\,\mathrm{Id}$ and $\Omega$ is a convex domain, we prove that the law converges to the Gibbs measure exponentially fast in $L^2$, with an explicit rate that scales like $\sqrt m$, which is optimal when $U$ is convex. Since the normally reflected overdamped dynamics converges exactly at rate $m$, this establishes a square-root acceleration for constrained sampling in the small-gap regime when $m$ is small, matching the acceleration known in the unconstrained case. The explicit rate is the same in the unconstrained setting of Fan--Li--Lu. The proof adapts the modified $L^2$ hypocoercivity method of Dolbeault--Mouhot--Schmeiser with the gap-shifted corrector of Fan--Li--Lu. The specular symmetry makes the transport operator antisymmetric, and that the corrector automatically selects the Neumann realization of the overdamped generator, which is precisely the boundary condition that keeps every auxiliary function inside the specular class.  The Bochner identity used in the whole-space argument is replaced by a weighted Reilly formula, whose boundary contribution involves the second fundamental form of $\partial\Omega$ and is nonnegative for convex $\Omega$. Finally, we extend our results to the setting where the domain $\Omega$ is non-convex. We obtain an explicit contraction rate that depends on the domain.
\end{abstract}
\maketitle
\section{Introduction}\label{sec:intro}
Consider the problem of sampling a distribution $\mu_{x}$
on a constrained domain $\Omega\subset\mathbb{R}^{d}$
with probability density function
\begin{equation}
\mu_{x}(x)\propto\exp(-U(x)),\qquad x\in \Omega,
    \label{eq-target}
\end{equation}
for a function $U:\overline\Omega \to \mathbb{R}$.
Both constrained sampling (with domain $\Omega\subset\mathbb{R}^{d}$) and unconstrained sampling (with domain $\Omega=\mathbb{R}^{d}$) are fundamental in many machine learning applications, such as Bayesian statistical inference, Bayesian formulations of inverse problems, and Bayesian classification and regression tasks \cite{gelman1995bayesian,stuart2010inverse,andrieu2003introduction,teh2016consistency,DistMCMC19,GHZ2022,GIWZ2024}.

To the best of our knowledge, the first Langevin-based algorithm
for the constrained sampling problem \eqref{eq-target}
was the \textit{projected Langevin Monte Carlo} (PLMC) algorithm proposed and studied by \cite{bubeck2015finite, bubeck2018sampling}. PLMC can be viewed as the discretization of the \textit{reflected Langevin dynamics} (RLD),
i.e.\ the continuous-time overdamped Langevin stochastic differential equation (SDE) with (normal) reflection at the boundary:
\begin{equation}\label{reflected:SDE}
dX_{t}=-\nabla U(X_{t})\,\rmd t+\sqrt{2}\,\rmd W_{t}-n(X_{t})\ell(\rmd t),
\end{equation}
where $W_{t}$ is a standard $d$-dimensional Brownian motion, $n(x)$ is the \emph{outward} unit normal vector for $x\in\partial\Omega$ (and can be chosen arbitrarily off $\partial\Omega$),
and $-\int_{0}^{t}n(X_s)\,\ell(ds)$ is a bounded-variation reflection term that enforces the constraint
$X_t\in\overline\Omega$ for all $t\ge0$ whenever $X_0\in\overline\Omega$. The measure $\ell(\rmd t)$ is nonnegative with $\ell([0,t])<\infty$ for each $t$,
and it charges only boundary times, i.e.
$\supp(\ell)\subseteq \{t\ge0:\,X_t\in\partial\Omega\}$.
The reflected Langevin dynamics \eqref{reflected:SDE} can be viewed as a Skorokhod problem on $\Omega$ \cite{LionsSznitman84,Saisho87}:
one first follows the unconstrained Langevin increment, and whenever the trajectory hits the boundary,
a bounded-variation correction is added so that $X_t \in \overline\Omega$ for all $t\ge 0$.
Geometrically, this correction acts only at boundary times (i.e., the measure $\ell(\rmd t)$ is supported on
$\{t: X_t\in \partial \Omega\}$) and pushes the trajectory inward along $-n$.
In smooth domains, the reflection term is the \emph{minimal} correction that prevents leaving $\Omega$; existence and uniqueness for \eqref{reflected:SDE} on convex or smooth domains are classical \cite{LionsSznitman84,Saisho87}.

There have been many variants of Langevin algorithms for constrained sampling since the seminal works of \cite{bubeck2015finite, bubeck2018sampling}.
\cite{Lamperski2021} considers the \textit{projected stochastic gradient Langevin dynamics} in the setting of non-convex smooth Lipschitz $U$ on a convex body where the gradient noise is assumed to have finite variance with a uniform sub-Gaussian structure; see also \cite{zheng2022constrained}.
\cite{Brosse} proposed the \textit{proximal Langevin Monte Carlo} for constrained sampling; see also \cite{SR2020} for a study on the proximal stochastic gradient Langevin algorithm
from a primal-dual perspective and \cite{2026IslamZhu} for a study in the decentralized setting.
\textit{Mirror descent-based Langevin algorithms} (see e.g. \cite{hsieh2018mirrored,Chewi2020,Zhang2020,TaoMirror2021,Ahn2021})
are another alternative for constrained sampling, first proposed in \cite{hsieh2018mirrored}, inspired by the classical mirror descent in optimization.
More recently, inspired by the penalty method in the optimization literature,
\textit{penalized Langevin Monte Carlo algorithms} were proposed and studied in \cite{GHZ2022},
where the objective $U$ can be non-convex in general. In addition, constrained non-convex exploration combined with replica-exchange Langevin dynamics was proposed and studied in \cite{constraint_replica}.

In the literature, it is known
that in the setting of unconstrained sampling,
by \textit{breaking reversibility},
a \textit{non-reversible} Langevin dynamics
can converge to the target distribution faster on the Euclidean space $\mathbb{R}^{d}$.
This phenomenon is well understood through non-asymptotic
convergence analysis; see e.g. \cite{HHS93,HHS05,reyLDP,HWGGZ20,GGZ2}.
Motivated by this, in the setting of constrained sampling, non-reversible Langevin algorithms with skew-reflected boundary were
proposed and studied in \cite{DFTWZ2025}.
\cite{DFTWZ2025} showed that the discretized algorithm based on \textit{skew-reflected non-reversible Langevin dynamics} (SRNLD),
obtained by adding a \textit{skew-symmetric} matrix $J$
to the drift term in \eqref{reflected:SDE} and replacing
the normal reflection by a \textit{skew reflection},
can accelerate the convergence
of the (reversible) projected Langevin Monte Carlo.
However, how to choose the skew-symmetric matrix $J$ in practice is a challenging problem.
Very recently,
\cite{wang2026constrained} showed acceleration for constrained sampling through the lens of large deviations and asymptotic variance for SRNLD under the
additional constraint that $J(x)n(x)=0$. Indeed, under this additional condition,
skew-reflection becomes a normal reflection. Extensive numerical experiments using both synthetic and real data in \cite{wang2026constrained} demonstrated superior performance for the choice of $J$ that satisfies this additional condition.

One of the most popular non-reversible Langevin dynamics in the literature
is the \textit{underdamped Langevin dynamics} (ULD), which is a kinetic, momentum-based variant of the Langevin SDE
that can converge to the stationary distribution
faster than the overdamped Langevin dynamics \cite{Eberle,Ma2019,GGZ,CLW23,FLL26}.
Constrained versions of the underdamped dynamics have received considerably less attention than their constrained overdamped or unconstrained underdamped counterparts, and the available guarantees are of a different nature from ours. The most extensively studied model is the penalized underdamped Langevin \cite{GHZ2022} that replaces the hard constraint by a smooth penalty and analyses both penalized overdamped and penalized underdamped Langevin Monte Carlo algorithms, obtaining non-asymptotic iteration complexity bounds for possibly non-convex $U$ whose dimension dependence improves from $d$ to $\sqrt d$ upon passing to the underdamped scheme. For log-concave targets on a convex body, \cite{RandMidpointConstrained} analyses randomized-midpoint discretizations of kinetic dynamics under constraints. On the modeling side, \cite{NordenhogSharma25} builds a constrained score-based generative model on exactly the process \eqref{eq:sde} studied here --- underdamped Langevin with specular reflection of the velocity --- and compares it numerically with reflected (local-time) diffusions; but the analysis there concerns the discretization error rather than the mixing rate. On the kinetic partial differential equation (PDE) side, well-posedness of \eqref{eq:sde} and the associated trace theory are treated in \cite{BossyJabir15,Carrillo98,Mischler10}, constructive $L^2$ hypocoercivity in bounded domains under general Maxwell boundary conditions --- including the vanishing-accommodation (specular) endpoint --- is established in \cite{BCMT23} for the linearized Boltzmann and Landau collision operators in the potential-free setting, with mass-only-conserving operators such as relaxation and Fokker--Planck indicated there as straightforward adaptations, and the diffusion limit of the specularly reflected kinetic Fokker--Planck equation towards the normally reflected diffusion is derived in \cite{CesbronHutridurga16}.

It is also well known that when $U$ is $m$-strongly convex,
overdamped Langevin dynamics converges to the stationary distribution
exponentially fast in time $t$ with the speed $O(m)$; this is the Bakry--\'Emery criterion, see e.g.\ \cite[Ch.~4]{BGL14}.
Moreover, the same speed $O(m)$ can be achieved with the normal reflection \eqref{reflected:SDE}; see e.g. \cite{eberle-2016}.
However, even though several works have studied
the constrained underdamped Langevin dynamics with specular boundary,
an explicit exponential decay rate for \eqref{eq:sde} itself is missing from the literature,
and whether it enjoys the same acceleration as in the unconstrained case \cite{Eberle,CLW23,FLL26}
compared to the overdamped counterpart \eqref{reflected:SDE} has been an open problem until
a very recent work \cite{EberleLorlerJFA26}, 
that was the first to establish the $O(\sqrt m)$ rate for
underdamped Langevin dynamics with specular reflection --- on a Riemannian manifold
with locally convex boundary, and for randomized Hamiltonian Monte Carlo as well ---
by way of a quantitative space-time divergence lemma and the theory of second-order
lifts. Our paper is complementary to \cite{EberleLorlerJFA26}: we obtain the same scaling by an
entirely different route, adapting the modified $L^2$ method of
Dolbeault--Mouhot--Schmeiser \cite{DMS09,DMS15} with the gap-shifted corrector of
\cite{FLL26} directly to the boundary-value problem \eqref{eq:bke}, and this yields
three distinctions from \cite{EberleLorlerJFA26}. First, the universal constants improve
substantially, i.e. our Theorem~\ref{thm:main} gives the contraction rate $\Lambda=\frac{2-\sqrt2}{12}\sqrt m$
when $U$ is convex, hence a relaxation time at most $31.8/\sqrt m$, against the bound
$(1773+\pi)/\sqrt m$ of \cite[Cor.~17]{EberleLorlerJFA26}. Second, we prove that the $\sqrt m$
scaling cannot be improved for a bounded domain (Proposition~\ref{prop:sharp}), by computing the exact
eigenfunctions of the specularly reflected generator on an interval in a one-dimensional example. Third, in the Euclidean space, 
\cite{EberleLorlerJFA26} requires the convexity of $\Omega$, 
whereas we are able to work with non-convex domains (Section~\ref{sec:nonconvex})
by leveraging the fact that the geometry of $\Omega$ enters our argument
through a single inequality, the sign of the boundary term in a weighted Reilly
formula, so that replacing that sign condition by a lower bound on the second
fundamental form yields an explicit rate on domains that are not convex.

Quantitative hypocoercivity for kinetic equations in bounded domains is by now an active subject; in particular Bernou--Carrapatoso--Mischler--Tristani \cite{BCMT23} established constructive $L^2$ hypocoercivity for linear kinetic equations (linearized Boltzmann and Landau; mass-only-conserving models such as relaxation and Fokker--Planck by adaptation) under general Maxwell boundary conditions, including specular reflection, and boundary trace theory for kinetic Fokker--Planck goes back at least to \cite{Carrillo98}. For the underdamped Langevin dynamics itself, explicit whole-space rates with the optimal $O(\sqrt m)$ scaling were obtained in \cite{CLW23} via a space-time Poincar\'e inequality and revisited via the modified $L^2$ method in \cite{FLL26}; see also \cite{RS18,Herau06,Villani09} for background on the $L^2$ and hypocoercivity frameworks. To the best of our knowledge, the present paper is the first to record that the gap-shifted Dolbeault--Mouhot--Schmeiser corrector of \cite{FLL26} is fully compatible with specular boundary conditions, with no degradation of the rate.

Let $\Omega\subset\R^d$ be a bounded convex domain with $C^{2,1}$ boundary, with outward unit normal $n(x)$ at $x\in\partial\Omega$, and let $U\in C^2(\overline\Omega)$. We study the underdamped Langevin dynamics $(X_t,V_t)\in\overline{\Omega}\times\R^d$ confined to $\Omega$ by \emph{specular reflection}:
\begin{subequations}\label{eq:sde}
\begin{align}
\rmd X_t&=V_t\,\rmd t,\\
\rmd V_t&=-\nabla U(X_t)\,\rmd t-\gamma V_t\,\rmd t+\sqrt{2\gamma}\,\rmd W_t,\qquad X_t\in\Omega,\\
V_{t+}&=R_{X_t}V_{t-},\qquad X_t\in\partial\Omega,\label{eq:sde-reflect}
\end{align}
\end{subequations}
where $\gamma>0$ is the friction coefficient, $W_t$ a standard $d$-dimensional Brownian motion, and
\begin{equation}\label{eq:reflection-map}
R_xv:=v-2\,(v\cdot n(x))\,n(x)
\end{equation}
is the specular reflection map, which flips the normal component of the velocity and preserves the tangential component and the kinetic energy. Well-posedness of \eqref{eq:sde} has been studied in \cite{BossyJabir15,BossyJabirMaftei17,Carrillo98}; the same process is the object of the numerical schemes of \cite{LST24}. See Sections~\ref{sec:sde-pde}--\ref{sec:wp} for the precise relationship between \eqref{eq:sde} and the boundary-value problem \eqref{eq:bke}, for what is available in the literature, and for the functional-analytic input we assume.

We record for later use the three elementary properties of $R_x$ that the whole paper rests on: for every $x\in\partial\Omega$,
\begin{equation}\label{eq:R-properties}
R_x^2=\mathrm{Id},\qquad |R_xv|=|v|,\qquad (R_xv)\cdot n(x)=-\,v\cdot n(x),
\end{equation}
and consequently $R_x$ is a linear isometry of $\R^d$ with $\det R_x=-1$, so $|\det R_x|=1$.
The law $\varrho(t,x,v)$ of \eqref{eq:sde} solves the kinetic Fokker--Planck equation on $\Omega\times\R^d$,
\begin{equation}\label{eq:kfp}
\partial_t\varrho=\left(-v\cdot\nabla_x+\nabla_xU\cdot\nabla_v\right)\varrho
+\gamma\left(\nabla_v\cdot(v\varrho)+\Delta_v\varrho\right),
\end{equation}
with the specular boundary condition, expressed in terms of boundary traces as
\begin{equation}\label{eq:specularBC}
\varrho(t,x,v)=\varrho(t,x,R_xv),\qquad x\in\partial\Omega,\ v\in\R^d.
\end{equation}
With the Hamiltonian $H(x,v):=\frac12|v|^2+U(x)$, the Gibbs density
\begin{equation}\label{eq:gibbs}
\mu(x,v):=\frac1Z\,e^{-H(x,v)},\qquad Z:=\int_{\Omega\times\R^d}e^{-H(x,v)}\,\rmd x\,\rmd v,
\end{equation}
is a stationary solution of \eqref{eq:kfp}--\eqref{eq:specularBC}: it satisfies \eqref{eq:specularBC} because $|R_xv|=|v|$. As in the whole-space case, $\mu$ is a product measure, $\rmd\mu=\rmd\mux(x)\,\rmd\kappa(v)$, where $\mux\propto e^{-U(x)}\mathbf{1}_\Omega\,\rmd x$ and $\kappa\propto e^{-\frac{1}{2}|v|^{2}}\,\rmd v$ is the standard Gaussian distribution on $\R^d$.
The ergodic behavior of \eqref{eq:sde} is encoded in the backward Kolmogorov equation:
\begin{equation}\label{eq:bke}
\partial_tf=\Lgen f:=\left(v\cdot\nabla_x-\nabla_xU\cdot\nabla_v\right)f
+\gamma\left(-v\cdot\nabla_v+\Delta_v\right)f\quad\text{in }\Omega\times\R^d,
\end{equation}
with the specular boundary condition:
\[
f(t,x,v)=f(t,x,R_xv),\qquad\text{for $x\in\partial\Omega$}.
\]

In a recent seminal paper \cite{FLL26}, Fan, Li and Lu showed that in the whole space $\R^d\times\R^d$ the modified $L^2$ method of Dolbeault--Mouhot--Schmeiser \cite{DMS09,DMS15}, with the standard corrector replaced by the \emph{gap-shifted corrector}:
\begin{equation}\label{eq:gap-corrector}
\Am=(m-\Lo)^{-1}(\La\Pv)^*,
\end{equation}
yields the optimal $O(\sqrt m)$ hypocoercive rate previously obtained via space-time Poincar\'e inequalities \cite{CLW23} --- themselves building on the variational approach of \cite{AAMN24} --- and via lifting techniques \cite{BrigatiLorlerWang25,EGHLM25,EberleLorler26,LiLu25}; here $m$ is the Poincar\'e constant of $\mux$, $\Lo$ the overdamped infinitesimal generator, $\La$ the Liouville operator and $\Pv$ the velocity average (all defined in Section~\ref{sec:setting}).
In our paper, we will show that this method extends, with essentially no loss and with identical constants in the contraction rate, to the boundary-value problem \eqref{eq:bke} with specular reflection on a bounded convex domain.
We will also extend our analysis to a non-convex domain by obtaining the contraction rate that depends on the domain, where a convexity assumption on the boundary is replaced by a lower bound on the second fundamental form.
We control the unfavorable
boundary contribution by interior quantities. This is closely related to
techniques used in \cite{Wang14modified,CTT18} for functional inequalities and reflecting diffusions
on manifolds with non-convex boundary. However, the conformal-change approach used in \cite{Wang14modified,CTT18} is not directly transferable to the present kinetic
problem without also changing the metric notion of specular reflection.
The point used here is instead that the boundary trace absorption is
compatible with the gap-shifted corrector and the hypocoercivity proof continues to hold, albeit with a modified constant.

\noindent\textbf{Contributions.} Our contributions can be summarized as follows.
\begin{itemize}
\item
First, we provide an explicit exponential convergence rate for underdamped Langevin dynamics with specular reflection on a bounded convex domain (Theorem~\ref{thm:main}), and we show that it exhibits the $\sqrt m$ scaling, which is optimal when $U$ is convex (Proposition~\ref{prop:sharp}), hence a square-root acceleration over the normally reflected overdamped dynamics. As a consequence, we obtain quantitative convergence rates in $\chi^{2}$-divergence,
total variation and Wasserstein distances (Corollary~\ref{cor:divergences}).
\item
Second, we identify the two structural mechanisms that make the modified $L^2$ method survive the presence of a boundary --- specular symmetry, which restores the antisymmetry of the transport operator, and the automatic selection of the \emph{Neumann} realization of $\Lo$ by the corrector, which is exactly the condition keeping every auxiliary function inside the specular class. Neither is a formality: if one insists on the Dirichlet realization of $\Lo$, the function $v\cdot\nabla_x h$ generated by the corrector is \emph{not} specular and every integration by parts in Lemma~\ref{lem:corrector} produces an uncontrolled boundary flux; and if one works with a test-function class not adapted to \eqref{eq:reflection-map}, then $\La$ is not antisymmetric and already the energy identity \eqref{eq:energy} fails.
\item
Third, we replace the Bochner identity, the only ingredient of \cite{FLL26} that sees the geometry of the spatial domain, by a weighted Reilly formula (Lemma~\ref{lem:reilly}) whose boundary term carries the second fundamental form of $\partial\Omega$; convexity makes this term nonnegative, so the flat-space second-order estimate survives with the \emph{same} constant. The net effect is that every constant in \cite{FLL26} carries over unchanged, with $m$ now the Neumann Poincar\'e constant of $\mux$ on $\Omega$.
\item
Finally, we extend our results to the setting where the domain
$\Omega$ is non-convex. We obtain an explicit contraction rate that depends
on the domain (Theorem~\ref{thm:main-nc}).
The analysis is based on the observation that
convexity enters the whole argument at a
single place, the sign of the boundary term
$\int_{\partial\Omega}\II(\nabla h,\nabla h)\,e^{-U}\rmd\sigma$ in the
weighted Reilly formula, and that this term is only ever needed for
Neumann functions $h$, whose gradients are tangential. If the boundary is
merely semiconvex, $\II\succeq-\sigma_0$, the unfavorable part of the term
is bounded by $\sigma_0\int_{\partial\Omega}|\nabla h|^2e^{-U}\rmd\sigma$,
and a trace inequality built from a Lipschitz extension of the normal
field (Lemma~\ref{lem:trace-nc}) controls it by
$\theta\|\nabla^2h\|^2+C_\theta\|\nabla h\|^2$ with $\theta<1$ as small as
we wish. Since the corrector bound of Lemma~\ref{lem:corrector} tolerates
a constant factor in front of the Hessian term, the second-order estimate
survives with $K$ replaced by an explicit
$K_\theta=K+\sigma_0^2/\theta+\sigma_0\Theta$, where $\Theta$ is
controlled by the reach of $\partial\Omega$ and $\|\nabla U\|_\infty$
(Remark~\ref{rem:Theta-size-nc}). The negative boundary curvature thus enters the rate in the
same way as the negative curvature $K$ of the potential.
\end{itemize}

\noindent\textbf{Organization.} The rest of the paper is organized as follows.
We introduce the technical background and preliminary results in Section~\ref{sec:setting}.
The main results of the paper are presented in Section~\ref{sec:main}.
The key technical lemmas that are used to prove the main result are provided in Section~\ref{sec:lemmas}.
Section~\ref{sec:proof} presents the proof of the main result.
Next, we extend our results to the non-convex domain in Section~\ref{sec:nonconvex}.
Finally, we conclude in Section~\ref{sec:conclusion}.

\section{Preliminaries}\label{sec:setting}

\subsection{Specular class and basic operators}
Throughout, $\Omega\subset\R^d$ is a bounded convex domain with $\partial\Omega\in C^{2,1}$ (or $C^3$) and $U\in C^2(\overline\Omega)$. We write $\rmd\mu=\rmd\mux\,\rmd\kappa$ as in Section~\ref{sec:intro}, $\ip{f}{g}=\int_{\Omega\times\mathbb{R}^{d}} fg\,\rmd\mu$ for the (real) $L^2(\mu)$ inner product, and $\Lmuz=\{f\in\Lmu:\int_{\Omega\times\mathbb{R}^{d}} f\rmd\mu=0\}$.
\begin{definition}[$C^{k,1}$ boundary]\label{def:Ck1}
Let $k\ge0$ be an integer. A bounded open set $\Omega\subset\R^d$ has
\emph{boundary of class $C^{k,1}$}, written $\partial\Omega\in C^{k,1}$, if for
every $x_0\in\partial\Omega$ there exist $r,h>0$, an orthonormal coordinate
system $y=(y',y_d)\in\R^{d-1}\times\R$ centered at $x_0$, and a function
$\varphi\in C^{k,1}(\overline{B'_r})$ with $\varphi(0)=0$ and
$\sup_{B'_r}|\varphi|<h$, such that, writing
$B'_r:=\{y'\in\R^{d-1}:|y'|<r\}$ and $V:=B'_r\times(-h,h)$,
\begin{equation}\label{eq:local-graph}
\Omega\cap V=\big\{(y',y_d)\in V:\ y_d>\varphi(y')\big\},
\qquad
\partial\Omega\cap V=\big\{(y',y_d)\in V:\ y_d=\varphi(y')\big\}.
\end{equation}
Here $C^{k,1}(\overline{B'_r})$ denotes the space of functions of class $C^k$
on $\overline{B'_r}$ all of whose partial derivatives of order $k$ are
Lipschitz continuous. By compactness of $\partial\Omega$, finitely many charts
\eqref{eq:local-graph} cover $\partial\Omega$, and the $C^{k,1}$ norms of the
corresponding $\varphi$'s may be taken uniformly bounded. On a compact
boundary, $C^{k+1}\subset C^{k,1}\subset C^k$; in particular a $C^3$ boundary
is $C^{2,1}$. This is the standard local-graph definition; see e.g.\ \cite[Def.~1.2.1.1]{Grisvard85} or \cite[\S6.2]{GilbargTrudinger01}.
\end{definition}

\begin{remark}\label{rem:C21}
We use Definition~\ref{def:Ck1} with $k=2$. In the chart
\eqref{eq:local-graph} the outward unit normal vector is
\[
n\left(y',\varphi(y')\right)
=\frac{\left(\nabla'\varphi(y'),\,-1\right)}{\sqrt{1+|\nabla'\varphi(y')|^2}}\,,
\]
so that $\partial\Omega\in C^{2,1}$ means exactly that $n\in C^{1,1}(\partial\Omega)$;
consequently, the shape operator $D_\tau n$, and hence the second fundamental
form $\II$, is well defined and Lipschitz on $\partial\Omega$, and in
particular $\|\II\|_{L^\infty(\partial\Omega)}<\infty$, as used in
Lemma~\ref{lem:reilly}. The $C^{2,1}$ regularity is precisely what the $H^3$
elliptic estimate in the Step~5 in the proof of Lemma~\ref{lem:reilly} requires: the Neumann
problem gains two derivatives ($H^{k+2}$ regularity) on domains with
$C^{k+1,1}$ boundary, here with $k=1$; see \cite[Thm.~2.5.1.1]{Grisvard85}.
\end{remark}
\begin{definition}[Specular class]\label{def:specular}
Let $\mathscr C$ denote the set of functions $f\in C^2(\overline\Omega\times\R^d)$ such that
\begin{equation}\label{eq:poly-growth}
|\partial^\alpha f(x,v)|\le C_\alpha(1+|v|)^{N}
\quad\text{for all }(x,v)\in\overline\Omega\times\R^d\text{ and all multi-indices }\alpha\text{ with }|\alpha|\le2,
\end{equation}
where $\partial^\alpha$ ranges over all mixed derivatives in $(x,v)$, for some $N\in(0,\infty)$ and constants $C_\alpha<\infty$ depending on $f$, and
\begin{equation}\label{eq:specular-class}
f(x,v)=f(x,R_xv)\qquad\text{for all }x\in\partial\Omega,\ v\in\R^d.
\end{equation}
We write $\mathscr C_0:=\mathscr C\cap\Lmuz$.
\end{definition}
The infinitesimal generator of \eqref{eq:sde} decomposes as
\begin{equation}\label{eq:decomp}
\Lgen=\La+\gamma\Ls,\qquad
\La:=v\cdot\nabla_x-\nabla_xU\cdot\nabla_v,\qquad
\Ls:=-v\cdot\nabla_v+\Delta_v .
\end{equation}
Denoting by $\nabla_x^*$ and $\nabla_v^*$ the formal adjoints of $\nabla_x,\nabla_v$ in $\Lmu$, we have $\Ls=-\nabla_v^*\nabla_v$; since $\Ls$ involves no $x$-derivatives, it is symmetric and nonpositive on $\mathscr C$.
Indeed,
for fixed $x$, let us integrate by parts in $v$ against the Gaussian $\rmd\kappa\propto e^{-|v|^2/2}\rmd v$. Since $\nabla_v^{*}=-\nabla_v+v$ in $L^2(\kappa)$, we get for each fixed $x\in\Omega$,
\begin{align}\label{by:integrating}
\int_{\R^d}(\Ls g)\,f\,\rmd\kappa
&=\int_{\R^d}\left(\Delta_vg-v\cdot\nabla_vg\right)f\,\rmd\kappa
\\
&=\int_{\R^d}\nabla_v\cdot\left(e^{-|v|^2/2}\nabla_vg\right)\frac{f}{e^{-|v|^2/2}}\rmd\kappa
=-\int_{\R^d}\nabla_vf\cdot\nabla_vg\,\rmd\kappa,
\nonumber
\end{align}
where the last step is the divergence theorem on $\{|v|<R\}$ together with the vanishing of 
\[
\int_{|v|=R}f\,(\nabla_vg\cdot v/|v|)\,e^{-|v|^2/2}\rmd S\to0,\qquad\text{as $R\to\infty$}, 
\]
which is where \eqref{eq:poly-growth} is used.
Finally, by integrating \eqref{by:integrating} with respect to $\mux$, we get
\begin{equation}\label{eq:Ls-sym}
\ip{f}{\Ls g}=\ip{\Ls f}{g}=-\int_{\Omega\times\mathbb{R}^{d}}\nabla_vf\cdot\nabla_vg\,\rmd\mu .
\end{equation}
The transport part requires the boundary condition:
\begin{lemma}[Antisymmetry of $\La$ on the specular class]\label{lem:antisym}
For all $f,g\in\mathscr C$,
\begin{equation}\label{eq:antisym}
\ip{\La f}{g}=-\ip{f}{\La g}.
\end{equation}
\end{lemma}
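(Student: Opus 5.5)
The plan is to compute $\ip{\La f}{g}+\ip{f}{\La g}=\int \La(fg)\,\rmd\mu$, using that $\La$ is a first-order derivation, and to show this integral vanishes. Write $\rmd\mu=Z^{-1}e^{-H}\rmd x\,\rmd v$ with $H=\frac12|v|^2+U(x)$. The key structural facts are that $\La$ is the Hamiltonian vector field $v\cdot\nabla_x-\nabla_xU\cdot\nabla_v$, which annihilates $H$ ($\La H=v\cdot\nabla U-\nabla U\cdot v=0$), and which is divergence-free in Lebesgue measure. Hence, with $\phi:=fg$,
\[
\La\phi\,e^{-H}=\nabla_x\cdot\big(v\,\phi\,e^{-H}\big)-\nabla_v\cdot\big(\nabla_xU\,\phi\,e^{-H}\big).
\]
I integrate this over $\Omega\times\{|v|<R\}$ and apply the divergence theorem in $x$ (using $C^{2,1}$, in fact Lipschitz, regularity of $\partial\Omega$ and $\phi\in C^2(\overline\Omega\times\R^d)$) and in $v$. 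The $v$-boundary term on $|v|=R$ is bounded by $C\|\nabla U\|_\infty(1+R)^{2N}R^{d-1}e^{-R^2/2}\to0$ by the polynomial growth \eqref{eq:poly-growth} and $U\in C^2(\overline\Omega)$. Dominated convergence then handles the interior integrals, since $|\La\phi|e^{-H}$ is integrable by the same growth bound.

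What remains is the spatial boundary flux
\[
\int_{\partial\Omega}\int_{\R^d}(v\cdot n(x))\,f(x,v)g(x,v)\,e^{-|v|^2/2}\,\rmd v\,e^{-U(x)}\,\rmd\sigma(x),
\]
and the main step is to show it is zero. For fixed $x\in\partial\Omega$, I change variables $v=R_xw$ in the inner integral. By \eqref{eq:R-properties}, $|\det R_x|=1$, $|R_xw|=|w|$ and $(R_xw)\cdot n=-w\cdot n$. By the specular condition \eqref{eq:specular-class} applied to both $f$ and $g$, $f(x,R_xw)g(x,R_xw)=f(x,w)g(x,w)$. So the inner integral equals its own negative and vanishes for every $x$; Fubini is justified by the integrability of $|v|(1+|v|)^{2N}e^{-|v|^2/2}$ uniformly in $x$. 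This gives \eqref{eq:antisym}.

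The only delicate point, and the step I expect to be the main obstacle, is justifying the divergence theorem on the unbounded product domain with traces. I would handle it by truncating in $v$ as above: on the bounded Lipschitz domain $\Omega\times B_R$ the divergence theorem applies to the $C^1$ vector field, and the limit $R\to\infty$ is then controlled by the Gaussian weight together with the polynomial growth bound. The regularity $f,g\in C^2$ up to the boundary makes the traces classical, so no trace theory is needed.
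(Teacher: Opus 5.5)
Your proposal is correct and is essentially the paper's proof: you reduce to $\int\La(fg)\,\rmd\mu=0$ and kill the spatial boundary flux by the change of variables $v=R_xw$, using specular symmetry of $fg$, $|\det R_x|=1$, $|R_xw|=|w|$ and $(R_xw)\cdot n=-w\cdot n$. The only difference is that you do the $x$- and $v$-integrations by parts in one divergence theorem on $\Omega\times B_R$ and then let $R\to\infty$. The paper instead does them separately with Fubini and cancels the two interior terms $\pm\int fg\,(v\cdot\nabla_xU)\,\rmd\mu$. This is a cosmetic difference.
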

\begin{proof}
Since $\La$ is a first-order differential operator it satisfies the Leibniz rule 
\[
\La(fg)=(\La f)g+f(\La g), 
\]
and hence \eqref{eq:antisym} is equivalent to
\begin{equation}\label{eq:antisym-reduced}
\int_{\Omega\times\R^d}\La\varphi\,\rmd\mu=0,\qquad \varphi:=fg .
\end{equation}
Note that $\varphi\in\mathscr C$: it is $C^2$, it obeys \eqref{eq:poly-growth} (with $N$ doubled), and it satisfies \eqref{eq:specular-class} because $f$ and $g$ do, the condition \eqref{eq:specular-class} being stable under products.

\emph{Step 1: the $x$-integration by parts.} Fix $v\in\R^d$ and apply the divergence theorem to the vector field $x\mapsto \varphi(x,v)\,e^{-U(x)}\,v$ on $\Omega$, whose divergence is $e^{-U}\left(v\cdot\nabla_x\varphi-\varphi\,v\cdot\nabla_xU\right)$. With $n$ the \emph{outward} normal,
\[
\int_\Omega \left(v\cdot\nabla_x\varphi\right)e^{-U}\rmd x
=\int_\Omega\varphi\,(v\cdot\nabla_xU)\,e^{-U}\rmd x
+\int_{\partial\Omega}\varphi(x,v)\,(v\cdot n(x))\,e^{-U(x)}\rmd\sigma(x),
\]
where $\rmd\sigma$ is the $(d-1)$-dimensional surface measure on $\partial\Omega$. Multiplying by $Z^{-1}e^{-|v|^2/2}$ and integrating in $v$ (legitimate by \eqref{eq:poly-growth} and Fubini's theorem, the integrand being dominated by $C(1+|v|)^{2N+1}e^{-|v|^2/2}$ up to a constant) gives
\begin{equation}\label{eq:ibp-x}
\int_{\Omega\times\R^d}v\cdot\nabla_x\varphi\,\rmd\mu
=\int_{\Omega\times\R^d}\varphi\,(v\cdot\nabla_xU)\,\rmd\mu
+\frac1Z\int_{\partial\Omega}\!\int_{\R^d}\varphi(x,v)\,(v\cdot n(x))\,e^{-H(x,v)}\rmd v\,\rmd\sigma(x).
\end{equation}

\emph{Step 2: the $v$-integration by parts.} For fixed $x$, using $\nabla_v^*=-\nabla_v+v$ in $L^2(\kappa)$ and $\nabla_xU$ is constant in $v$,
\begin{equation}\label{eq:ibp-v}
-\int_{\Omega\times\R^d}\nabla_xU\cdot\nabla_v\varphi\,\rmd\mu
=-\int_{\Omega\times\R^d}\varphi\,(v\cdot\nabla_xU)\,\rmd\mu ,
\end{equation}
with no boundary term, $\R^d$ having no boundary and the Gaussian decaying.

\emph{Step 3: cancellation.} Adding \eqref{eq:ibp-x} and \eqref{eq:ibp-v}, the two interior terms $\pm\int_{\Omega\times\mathbb{R}^{d}}\varphi\,(v\cdot\nabla_xU)\rmd\mu$ cancel and we are left with
\begin{equation}\label{eq:boundary-flux}
\int_{\Omega\times\R^d}\La\varphi\,\rmd\mu=\frac1Z\int_{\partial\Omega}I(x)\,\rmd\sigma(x),
\qquad
I(x):=\int_{\R^d}\varphi(x,v)\,(v\cdot n(x))\,e^{-H(x,v)}\,\rmd v .
\end{equation}

\emph{Step 4: the boundary flux vanishes.} Fix $x\in\partial\Omega$ and perform in $I(x)$ the change of variables $v=R_xw$, i.e.\ $w=R_x^{-1}v=R_xv$ by \eqref{eq:R-properties}. We check the three factors of the integrand one at a time.
\begin{itemize}
\item \emph{The measure.} $R_x$ is linear with $|\det R_x|=1$, so $\rmd v=|\det R_x|\,\rmd w=\rmd w$.
\item \emph{The weight.} $H(x,R_xw)=\frac12|R_xw|^2+U(x)=\frac12|w|^2+U(x)=H(x,w)$ by \eqref{eq:R-properties}, so $e^{-H(x,R_xw)}=e^{-H(x,w)}$.
\item \emph{The test function.} $\varphi(x,R_xw)=\varphi(x,w)$ by the specular condition \eqref{eq:specular-class}, which applies precisely because $x\in\partial\Omega$.
\item \emph{The flux factor.} $(R_xw)\cdot n(x)=w\cdot n(x)-2(w\cdot n(x))|n(x)|^2=-\,w\cdot n(x)$ by \eqref{eq:R-properties}.
\end{itemize}
Hence, we conclude that
\begin{align*}
I(x)&=\int_{\R^d}\varphi(x,R_xw)\,\left((R_xw)\cdot n(x)\right)\,e^{-H(x,R_xw)}\,\rmd w
\\
&=\int_{\R^d}\varphi(x,w)\,\left(-w\cdot n(x)\right)\,e^{-H(x,w)}\,\rmd w
=-I(x).
\end{align*}
The integral $I(x)$ is absolutely convergent by \eqref{eq:poly-growth}, hence finite, and therefore $I(x)=0$ for every $x\in\partial\Omega$. By \eqref{eq:boundary-flux} this proves \eqref{eq:antisym-reduced}, and with it the lemma.
\end{proof}
Lemma~\ref{lem:antisym} is stated for $f,g\in\mathscr C$, i.e.\ for $C^2$ functions. But in the proof of Lemma~\ref{lem:corrector} and in Section~\ref{sec:proof} it is applied to pairs such as $(\phi,\La u)$ where $u=\Am\phi\in D(\Lo)\subset H^2(\Omega)$ only; $\La u=v\cdot\nabla_xu$ then lies in $L^2$ with $\nabla_xu\in H^1(\Omega)$, and $\La u$ is \emph{not} $C^2$.
To solve this issue, we introduce
the following lemma
at Sobolev regularity.

\begin{lemma}[Antisymmetry against affine-in-$v$ Sobolev fields]\label{lem:antisym-sob}
Let $f\in\mathscr C$ and let
\[
g(x,v)=g_0(x)+v\cdot G(x),\qquad g_0\in H^1(\Omega),\quad G\in H^1(\Omega;\R^d),
\]
and assume the trace condition
\begin{equation}\label{eq:sob-specular}
n(x)\cdot G(x)=0\qquad\text{for $\sigma$-a.e.\ }x\in\partial\Omega ,
\end{equation}
where $G$ on $\partial\Omega$ means the trace of $G$. Then $g\in\Lmu$, $\La g\in\Lmu$, and
\begin{equation}\label{eq:antisym-sob}
\ip{\La f}{g}=-\ip{f}{\La g}.
\end{equation}
\end{lemma}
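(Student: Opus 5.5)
Our approach is to repeat the four steps of the proof of Lemma~\ref{lem:antisym} directly, with the pointwise calculus replaced by Sobolev calculus and the specular cancellation replaced by the trace condition \eqref{eq:sob-specular}. We would not approximate $g$ by functions in $\mathscr C$, because a smooth approximation of $G$ need not keep $n\cdot G=0$ on $\partial\Omega$.

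\emph{Membership.} Since $\kappa$ is Gaussian, $\int|v\cdot G|^2\,\rmd\kappa=|G|^2$. Together with $e^{-U}$ being bounded above and below on $\overline\Omega$, this gives $g\in\Lmu$. Next,
\[
\La g=v\cdot\nabla_xg_0+v^\top(\nabla_xG)\,v-\nabla_xU\cdot G,
\]
and each term lies in $\Lmu$, since $\nabla g_0,\nabla G\in L^2(\Omega)$, $\nabla U\in L^\infty$, and the Gaussian has finite fourth moments.

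\emph{Integration by parts.} We cannot form $\varphi=fg\in\mathscr C$. Instead, for a.e.\ fixed $v$, the field $x\mapsto f(x,v)g(x,v)e^{-U(x)}v$ lies in $W^{1,1}(\Omega)$: $f(\cdot,v)\in C^2(\overline\Omega)$, $g(\cdot,v)\in H^1(\Omega)$ and $e^{-U}\in C^1$. The divergence theorem for $W^{1,1}$ fields on Lipschitz domains, with traces, therefore gives \eqref{eq:ibp-x} with $\varphi=fg$ and the trace of $g$ in the boundary term. The Leibniz rule $v\cdot\nabla_x(fg)=(v\cdot\nabla_x f)g+f(v\cdot\nabla_x g)$ holds a.e.\ in $W^{1,1}$.

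The $v$-integration \eqref{eq:ibp-v} is unchanged, because $g$ is smooth (affine) in $v$ for each $x$. Integrability of everything, which justifies Fubini, comes from \eqref{eq:poly-growth}, the Gaussian moments, and the trace inequality $\|g_0\|_{L^2(\partial\Omega)}+\|G\|_{L^2(\partial\Omega)}\lesssim\|g_0\|_{H^1}+\|G\|_{H^1}$. Adding the two identities leaves only the boundary flux
\[
\frac1Z\int_{\partial\Omega}\int_{\R^d}f(x,v)\,\bigl(g_0(x)+v\cdot G(x)\bigr)\,(v\cdot n(x))\,e^{-H(x,v)}\,\rmd v\,\rmd\sigma(x).
\]

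\emph{Vanishing flux.} Fix $x\in\partial\Omega$ (a.e.) and substitute $v=R_xw$, exactly as in Step~4 of Lemma~\ref{lem:antisym}. The measure, the weight and $f$ are invariant, and $v\cdot n$ changes sign. The factor $g$ is also invariant:
\[
R_xw\cdot G=w\cdot G-2(w\cdot n)(n\cdot G)=w\cdot G
\]
by \eqref{eq:sob-specular}, and $g_0$ does not depend on $v$. Hence the inner integral equals its own negative and is therefore zero.

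\emph{Main obstacle.} The delicate point is the middle step: justifying the divergence theorem with traces for the product $fg$ at $H^1$ regularity, and checking that the Fubini interchange is legitimate. The boundary integrand has polynomial growth in $v$ times an $L^2(\partial\Omega)$ trace, so it is absolutely integrable.

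If the parametrized divergence theorem becomes awkward, we would fall back to approximation. Take $g_0^k\to g_0$ in $H^1$ and $G^k\to G$ in $H^1$ with $g_0^k,G^k$ smooth up to the boundary. Apply the computation with traces for these smooth approximants; it produces a boundary term involving $n\cdot G^k$ that need not vanish. Then pass to the limit $k\to\infty$ in both sides using continuity of the trace map. In the limit the boundary term involves the trace $n\cdot G=0$ and cancels by the argument above.
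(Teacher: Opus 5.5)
Your proposal is correct and follows essentially the same route as the paper. Both arguments use the Sobolev Leibniz rule for $\varphi=fg$, then Gauss--Green with traces for each fixed $v$ (the paper places the field in $H^1$, you use $W^{1,1}$; either works). The $v$-integration by parts is unchanged, Fubini is justified by the trace theorem, and the boundary flux cancels under $v=R_xw$ because $n\cdot G=0$. Your fallback approximation argument, which keeps the nonzero boundary term for the smooth approximants and passes to the limit by continuity of the trace, is also valid but is not needed.
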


In Lemma~\ref{lem:antisym-sob}, Condition \eqref{eq:sob-specular} is exactly the specular symmetry of $g$, in the trace sense: for $\sigma$-a.e.\ $x\in\partial\Omega$ and every $v\in\mathbb{R}^{d}$,
\[
g(x,R_xv)-g(x,v)=-2\,(v\cdot n(x))\,\left(n(x)\cdot G(x)\right).
\]

\begin{proof}[Proof of Lemma~\ref{lem:antisym-sob}]
We divide the proof into several steps.

\emph{Step 0: integrability.} Since
\[
\La g=v\cdot\nabla_xg_0+\sum_{i,j}v_iv_j\,\partial_iG_j-\nabla_xU\cdot G,
\]
and $v$-polynomials belong to every $L^p(\kappa)$, while $g_0,G\in H^1(\Omega)$ and $\nabla_xU\in C^1(\overline\Omega)$ is bounded, both $g$ and $\La g$ are in $\Lmu$; and $\La f\in\Lmu$ by \eqref{eq:poly-growth}. All the integrals below are absolutely convergent.

\emph{Step 1: Leibniz rule.} Put $\varphi:=fg$. For each fixed $v$, $f(\cdot,v)\in C^1(\overline\Omega)$ with $f(\cdot,v)$ and $\nabla_xf(\cdot,v)$ bounded, and $g(\cdot,v)\in H^1(\Omega)$; hence $\varphi(\cdot,v)\in H^1(\Omega)$, its trace is $\gamma_0\varphi=f\,\gamma_0g$, and the product rule for weak derivatives gives
\[
\La\varphi=(\La f)g+f\,\La g\qquad\text{a.e. on $\Omega\times\R^d$}.
\]
Thus, \eqref{eq:antisym-sob} is again equivalent to
\[
\int_{\Omega\times\R^d}\La\varphi\,\rmd\mu=0.
\]

\emph{Step 2: the $x$-integration by parts.} For fixed $v$, the field $x\mapsto\varphi(x,v)e^{-U(x)}v$ belongs to $H^1(\Omega;\R^d)$, and $\Omega$ is a bounded Lipschitz domain, so the Gauss--Green formula applies in the trace sense (see e.g.\ \cite[\S1.5.3]{Grisvard85}) and yields \eqref{eq:ibp-x} verbatim, with $\varphi(x,v)$ on $\partial\Omega$ read as $\gamma_0\varphi(\cdot,v)(x)$. Multiplying by $Z^{-1}e^{-|v|^2/2}$ and integrating in $v$ is legitimate by Fubini's theorem, because by the trace theorem
\[
\int_{\R^d}\!\!\int_{\partial\Omega}|\gamma_0\varphi(x,v)|\,|v|\,e^{-H}\rmd\sigma\,\rmd v
\le C\!\int_{\R^d}\!(1+|v|)^{N+2}e^{-|v|^2/2}\rmd v\;\left(\|g_0\|_{H^1(\Omega)}+\|G\|_{H^1(\Omega)}\right)<\infty .
\]

\emph{Step 3: the $v$-integration by parts.} Identical to Step~2 of Lemma~\ref{lem:antisym}: for a.e.\ fixed $x$, $\varphi(x,\cdot)$ is a $C^2$ function of $v$ with polynomial growth, so \eqref{eq:ibp-v} holds.

\emph{Step 4: the boundary flux vanishes.} As in \eqref{eq:boundary-flux}, what remains is $Z^{-1}\int_{\partial\Omega}I(x)\rmd\sigma(x)$ with
\[
I(x)=\int_{\R^d}\gamma_0\varphi(x,v)(v\cdot n(x))e^{-H(x,v)}\rmd v.
\]
Fix $x\in\partial\Omega$ in the full-measure set where \eqref{eq:sob-specular} holds. Then
\[
\gamma_0\varphi(x,R_xw)=f(x,R_xw)\,g(x,R_xw)=f(x,w)\,g(x,w)=\gamma_0\varphi(x,w),
\]
using \eqref{eq:specular-class} for $f$ and \eqref{eq:sob-specular} for $g$. The measure, the weight and the flux factor transform under $v=R_xw$ exactly as in Step~4 of Lemma~\ref{lem:antisym}, so $I(x)=-I(x)$, i.e.\ $I(x)=0$ for $\sigma$-a.e.\ $x$. Hence,
\[
\int_{\Omega\times\R^d}\La\varphi\,\rmd\mu=0.
\]
This completes the proof.
\end{proof}
Consequently, for $f\in\mathscr C$,
\[
\ip{f}{\Lgen f}=\ip{f}{\La f}+\gamma\ip{f}{\Ls f},
\]
and
\[
\ip{f}{\La f}=-\ip{\La f}{f}=-\ip{f}{\La f},
\]
by Lemma~\ref{lem:antisym}, so that $\ip{f}{\La f}=0$; then apply \eqref{eq:Ls-sym} with $g=f$, we get
\begin{equation}\label{eq:energy}
\ip{f}{\Lgen f}=\gamma\,\ip{f}{\Ls f}=-\gamma\,\nrm{\nabla_vf}^2 ,
\end{equation}
which vanishes on $\ker\Ls=\{f:\nabla_vf=0\}$; as in the whole space, $\Lgen$ is not coercive and the decay must be extracted hypocoercively.
\subsection{Velocity averaging and the Neumann overdamped infinitesimal generator}
Let $\Pv$ denote the orthogonal projection onto $\ker\Ls$,
\begin{equation}\label{eq:Pv}
(\Pv f)(x):=\int_{\R^d}f(x,v)\,\rmd\kappa(v),
\end{equation}
which clearly preserves the specular class (its output is independent of $v$, so \eqref{eq:specular-class} holds trivially). We claim the following identities hold:
\begin{equation}\label{eq:LaPv}
\La\Pv f=v\cdot\nabla_x(\Pv f),\qquad
\Pv\La\Pv=0.
\end{equation}
For the first identity in \eqref{eq:LaPv}, since $\Pv f$ depends on $x$ only, we can compute that $\nabla_v(\Pv f)=0$ and
\[
\La\Pv f=v\cdot\nabla_x(\Pv f)-\nabla_xU\cdot\nabla_v(\Pv f)=v\cdot\nabla_x(\Pv f).
\]
For the second identity in \eqref{eq:LaPv}, we can compute that
\[
\Pv\La\Pv f=\int_{\R^d}v\cdot\nabla_x(\Pv f)\rmd\kappa(v)=\left(\int_{\R^d}v\,\rmd\kappa\right)\cdot\nabla_x(\Pv f)=0,
\]
where we used $\int_{\mathbb{R}^{d}} v\,\rmd\kappa=0$.
Next, since $\Pv$ is an orthogonal projection in $\Lmu$, it is self-adjoint, so that for $f,g\in\mathscr C$,
\begin{align}\label{two:equalities}
\ip{\La\Pv f}{g}=-\ip{\Pv f}{\La g}=-\ip{f}{\Pv\La g},
\end{align}
where the first equality in \eqref{two:equalities} is due to the fact that $\Pv f$ is $v$-independent and hence specular
so that one can apply Lemma~\ref{lem:antisym} to the pair $(\Pv f,g)$,
and the second equality in \eqref{two:equalities} is due to the self-adjointness of $\Pv$.
Therefore, for $f,g\in\mathscr C$,
\begin{equation}\label{eq:adjoint-LaPv}
(\La\Pv)^*=-\Pv\La
\quad\text{on }\mathscr C.
\end{equation}
Next, we define the overdamped infinitesimal generator through its Dirichlet form. Let
\[
\mathcal E(g,h):=\int_\Omega\nabla_xg\cdot\nabla_xh\,\rmd\mux,\qquad g,h\in H^1(\mux)=H^1(\Omega),
\]
(the equality of spaces holds since $e^{-U}$ is bounded above and below on $\overline\Omega$, because $U\in C^2(\overline\Omega)$ and $\overline\Omega$ is compact), and let $-\Lo$ be the nonnegative self-adjoint operator on $L^2(\mux)$ associated with $\mathcal E$. This is the \emph{Neumann realization}, which is summarized
in Lemma~\ref{lem:neumann} below. We first record, once and for all, the weighted Green formula that will be used repeatedly, at the Sobolev generality that is needed later.

\begin{lemma}[Weighted Green formula]\label{lem:green}
Let $\Omega\subset\R^d$ be a bounded Lipschitz domain and $U\in C^1(\overline\Omega)$, and write $Z_x:=\int_\Omega e^{-U}\rmd x$. For all $a\in H^1(\Omega)$ and $b\in H^2(\Omega)$,
\begin{equation}\label{eq:green}
\int_\Omega\nabla_xa\cdot\nabla_xb\,\rmd\mux=-\int_\Omega a\,\left(\Delta_xb-\nabla_xU\cdot\nabla_xb\right)\,\rmd\mux+\frac1{Z_x}\int_{\partial\Omega}a\,\partial_nb\,e^{-U}\rmd\sigma .
\end{equation}
\end{lemma}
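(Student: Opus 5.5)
The plan is to apply the Gauss--Green (divergence) theorem on the bounded Lipschitz domain $\Omega$ to the vector field
\[
F:=a\,e^{-U}\,\nabla_x b .
\]
This is the same device used in Step~2 of Lemma~\ref{lem:antisym-sob}, now with $a\,e^{-U}$ in place of $\varphi\,e^{-U}$ and $\nabla_x b$ in place of the constant vector $v$.

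First we check regularity. Since $U\in C^1(\overline\Omega)$, both $e^{-U}$ and $\nabla(e^{-U})=-e^{-U}\nabla U$ are bounded and continuous on $\overline\Omega$, so multiplication by $e^{-U}$ preserves $H^1(\Omega)$. Because $b\in H^2(\Omega)$, we have $\nabla_x b\in H^1(\Omega;\R^d)$. The product $a\,\partial_i b$ of two $H^1$ functions is in $W^{1,1}(\Omega)$, with the product rule holding a.e. This already suffices for Gauss--Green in $W^{1,1}$, and its trace is the product of traces.

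Next we compute the divergence a.e.:
\[
\nabla\cdot F=e^{-U}\left(\nabla a\cdot\nabla b+a\,\Delta b-a\,\nabla U\cdot\nabla b\right).
\]
The boundary flux is $\int_{\partial\Omega}a\,(\nabla b\cdot n)\,e^{-U}\,\rmd\sigma$, where $\partial_n b:=\gamma_0(\nabla b)\cdot n$ is well defined in $L^2(\partial\Omega)$ by the trace theorem applied to $\nabla b\in H^1$. Gauss--Green then gives
\[
\int_\Omega\nabla\cdot F\,\rmd x=\int_{\partial\Omega}F\cdot n\,\rmd\sigma .
\]
Rearranging and dividing by $Z_x$ yields \eqref{eq:green}, using $\rmd\mux=Z_x^{-1}e^{-U}\rmd x$.

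The only point needing care is justifying Gauss--Green for merely $W^{1,1}$ fields on a Lipschitz domain. If one prefers to avoid this, there is a density route. Take $a_k,b_k\in C^\infty(\overline\Omega)$ with $a_k\to a$ in $H^1$ and $b_k\to b$ in $H^2$; such approximations exist on Lipschitz domains. The identity for smooth functions is the classical divergence theorem. Each term then passes to the limit:
\begin{itemize}
\item the interior terms converge by Cauchy--Schwarz, since $U\in C^1$ makes the weights bounded;
\item the boundary term converges because the trace maps $H^1(\Omega)\to L^2(\partial\Omega)$ continuously, applied to $a_k$ and to $\nabla b_k$.
\end{itemize}
I expect this limiting argument to be the only non-routine step, and it is standard.
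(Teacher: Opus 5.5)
Your proposal is correct and matches the paper's proof, which applies the divergence theorem to the same field $a\,e^{-U}\nabla_x b$ for smooth $a,b$. It then passes to general $a\in H^1(\Omega)$, $b\in H^2(\Omega)$ exactly as in your density route, with Cauchy--Schwarz for the interior terms and the trace theorem for the boundary term; your direct $W^{1,1}$ Gauss--Green variant is also valid on a bounded Lipschitz domain.
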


\begin{proof}
For $a,b\in C^2(\overline\Omega)$ this is the divergence theorem applied to the vector field $a\,e^{-U}\nabla_xb$. In general, pick $a_k,b_k\in C^2(\overline\Omega)$ with $a_k\to a$ in $H^1(\Omega)$ and $b_k\to b$ in $H^2(\Omega)$, which is possible on a bounded Lipschitz domain; every term in \eqref{eq:green} passes to the limit --- the interior integrals by the Cauchy--Schwarz inequality, the boundary integral by the trace theorem, since $a_k\to a$ and $\partial_nb_k\to\partial_nb$ in $L^2(\partial\Omega)$.
\end{proof}

The Neumann realization is presented in the following lemma.

\begin{lemma}[The Neumann realization of $\Lo$]\label{lem:neumann}
Let $\Omega\subset\R^d$ be a bounded convex domain and $U\in C^2(\overline\Omega)$, and let $-\Lo$ be the nonnegative self-adjoint operator on $L^2(\mux)$ associated with the closed form $\mathcal E$ on $H^1(\Omega)$. Then,
\begin{align}\label{eq:neumann-domain}
D(\Lo)=\{h\in H^2(\Omega):\ \partial_nh=0\ \text{$\sigma$-a.e.\ on }\partial\Omega\},\qquad
\Lo h=\Delta_xh-\nabla_xU\cdot\nabla_xh,
\end{align}
and there is $C=C(\Omega,U)$ such that
\begin{equation}\label{eq:apriori-H2}
\|h\|_{H^2(\Omega)}\le C\left(\nrm[\mux]{\Lo h}+\nrm[\mux]{h}\right),\qquad h\in D(\Lo).
\end{equation}
\end{lemma}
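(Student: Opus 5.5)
The plan is the standard form-to-operator identification followed by elliptic regularity for the weighted Neumann problem. Write $\Lo_2 h:=\Delta_xh-\nabla_xU\cdot\nabla_xh$ on $D_2:=\{h\in H^2(\Omega):\partial_nh=0\}$. We will show that $D(\Lo)=D_2$ and that $\Lo=\Lo_2$ there.

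\emph{Step 1: the inclusion $D_2\subset D(\Lo)$.} Recall that $h\in D(\Lo)$ with $\Lo h=w$ means: $h\in H^1(\Omega)$ and $\mathcal E(a,h)=-\int_\Omega a\,w\,\rmd\mux$ for all $a\in H^1(\Omega)$. Take $h\in D_2$ and apply Lemma~\ref{lem:green} with $b=h$. The boundary term $\int_{\partial\Omega}a\,\partial_nh\,e^{-U}\rmd\sigma$ vanishes, so $\mathcal E(a,h)=-\int_\Omega a\,\Lo_2h\,\rmd\mux$. Hence $h\in D(\Lo)$ and $\Lo h=\Lo_2h$.

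\emph{Step 2: the reverse inclusion (regularity).} Let $h\in D(\Lo)$ and set $f:=h-\Lo h\in L^2(\mux)$. Then $h$ is the weak solution of the weighted Neumann problem
\[
\int_\Omega\left(\nabla a\cdot\nabla h+a h\right)e^{-U}\rmd x=\int_\Omega a f e^{-U}\rmd x\qquad\text{for all }a\in H^1(\Omega).
\]
This is the weak form of $-\nabla\cdot(e^{-U}\nabla h)+e^{-U}h=e^{-U}f$ with conormal condition $e^{-U}\partial_nh=0$. The operator is in divergence form with coefficient $e^{-U}\in C^{1}(\overline\Omega)$, bounded above and below.

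We then invoke $H^2$ regularity for Neumann problems. There are two routes to it. The first is the convex-domain result \cite[Thm.~3.2.1.3]{Grisvard85}, which asks only for convexity. The second is the $C^{1,1}$-boundary result \cite[Thm.~2.4.2.7]{Grisvard85}. The hypotheses here cover both: convexity is assumed, and $\partial\Omega\in C^{2,1}\subset C^{1,1}$.

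To reduce to the constant-coefficient case, expand the equation as $-\Delta h+h=f-\nabla U\cdot\nabla h$. The right-hand side lies in $L^2$, because $\nabla U$ is bounded and $h\in H^1$. The Neumann condition $\partial_nh=0$ is unchanged by this rewriting, since $e^{-U}>0$. Regularity then gives $h\in H^2(\Omega)$ with
\[
\|h\|_{H^2}\le C\left(\|f\|_{L^2}+\|\nabla U\|_\infty\|h\|_{H^1}+\|h\|_{L^2}\right).
\]

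Once $h\in H^2$ is known, we recover the boundary condition from Lemma~\ref{lem:green}. Since $\mathcal E(a,h)+\int a\,\Lo_2 h\,\rmd\mux=-\int a\,(\Lo h-\Lo_2h)\,\rmd\mux$, take first $a\in C_c^\infty(\Omega)$: the boundary term is absent, so $\Lo h=\Lo_2h$ a.e. With that identity, a general $a\in H^1$ then gives $\int_{\partial\Omega}a\,\partial_nh\,e^{-U}\rmd\sigma=0$. Traces of $H^1$ functions are dense in $L^2(\partial\Omega)$, so $\partial_nh=0$ $\sigma$-a.e. This proves \eqref{eq:neumann-domain}.

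\emph{Step 3: the a priori estimate \eqref{eq:apriori-H2}.} Combine the bound above with the energy identity $\|\nabla h\|_{L^2(\mux)}^2=\mathcal E(h,h)=-\langle h,\Lo h\rangle$. This gives $\|h\|_{H^1}\lesssim\|\Lo h\|+\|h\|$, using the equivalence of the $\mux$-norms with the Lebesgue norms. Together with $\|f\|\le\|h\|+\|\Lo h\|$, this yields \eqref{eq:apriori-H2}.

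The main obstacle is Step 2, the $H^2$ regularity up to the boundary. Everything else is bookkeeping with Lemma~\ref{lem:green}. The key point is to move the first-order term to the right-hand side, so that the classical Neumann regularity for $-\Delta+1$ applies directly and no variable-coefficient theory is needed.
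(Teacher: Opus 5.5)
Your proposal is correct and is essentially the paper's proof: the reverse inclusion comes from the weighted Green formula, and the forward inclusion comes from three steps. You move $\nabla_xU\cdot\nabla_xh$ to the right-hand side, apply Grisvard's $H^2$ regularity for the Neumann problem on convex domains, and absorb the first-order term through $\nrm[\mux]{\nabla_xh}^2=\ip[\mux]{h}{-\Lo h}$. The differences are cosmetic: you work with $-\Delta+1$, and you recover $\partial_nh=0$ explicitly from Green's formula and density of traces. The passage from the weighted to the unweighted weak Neumann problem is made rigorous by testing with $a=\psi e^{U}$, $\psi\in H^1(\Omega)$; that substitution is what your remark that $e^{-U}>0$ leaves the boundary condition unchanged actually amounts to, and it is also what the paper's ``unwinding the weight'' means.
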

\begin{proof}
\emph{Inclusion $\subseteq$.} Let $h\in D(\Lo)$. By definition of the form, $h\in H^1(\Omega)$ and
$\mathcal E(h,\psi)=\ip[\mux]{-\Lo h}{\psi}$ for all $\psi\in H^1(\Omega)$; unwinding the weight, this says that $h$ is a weak solution of the Neumann problem
\[
\Delta_x h=F:=\Lo h+\nabla_xU\cdot\nabla_xh\ \text{ in }\Omega,\qquad \partial_nh=0\ \text{ on }\partial\Omega .
\]
Here $F\in L^2(\Omega)$ because $\Lo h\in L^2$ and $\nabla_xU\in C^1(\overline\Omega)$ is bounded while $\nabla_xh\in L^2$. Since $\Omega$ is bounded and convex, the variational solution of the Neumann problem with $L^2$ data lies in $H^2(\Omega)$ and satisfies
\[
\|h\|_{H^2}\le C(\|\Delta h\|_{L^2}+\|h\|_{L^2});
\]
see \cite[Thm.~3.2.1.3]{Grisvard85}.  Hence $h\in H^2(\Omega)$ with $\partial_nh=0$ and $\Lo h=\Delta_xh-\nabla_xU\cdot\nabla_xh$.

Next, let us prove the estimate \eqref{eq:apriori-H2}. From the above discussions,
\[
\|h\|_{H^2}\le C(\|\Lo h\|_{L^2}+\|\nabla_xU\|_\infty\|\nabla_xh\|_{L^2}+\|h\|_{L^2}),
\]
and the first-order term is absorbed because $h\in D(\Lo)$ gives
\begin{align*}
\nrm[\mux]{\nabla_xh}^2=\mathcal E(h,h)
&=\ip[\mux]{-\Lo h}{h}
\\
&\le\nrm[\mux]{\Lo h}\,\nrm[\mux]{h}
\le\frac12\left(\nrm[\mux]{\Lo h}+\nrm[\mux]{h}\right)^2 .
\end{align*}
Since $e^{-U}$ is bounded above and below on $\overline\Omega$, the weighted and unweighted $L^2$ norms are equivalent, and \eqref{eq:apriori-H2} follows.

\emph{Inclusion $\supseteq$.} If $h\in H^2(\Omega)$ with $\partial_nh=0$, then $h\in H^1(\Omega)$ is in the form domain, and the weighted Green formula \eqref{eq:green} (Lemma~\ref{lem:green}) with $a=\psi\in H^1(\Omega)$, $b=h$ gives 
\[
\mathcal E(\psi,h)=\ip[\mux]{\psi}{-\Lo h}, 
\]
with no boundary term, so that $h\in D(\Lo)$ with $\Lo h=\Delta_xh-\nabla_xU\cdot\nabla_xh$.
\end{proof}

The Poincar\'e inequality \eqref{eq:poincare} states exactly that $-\Lo$ has a spectral gap $m$ on $L^2_0(\mux)$:
\begin{equation}\label{eq:gap}
\Spec\left(-\Lo|_{L^2_0(\mux)}\right)\subset[m,\infty).
\end{equation}
To see \eqref{eq:poincare} is equivalent to \eqref{eq:gap}, note that by the spectral theorem for the nonnegative self-adjoint operator $-\Lo$ restricted to $L^2_0(\mux)$, one has
\[
\inf\Spec\left(-\Lo|_{L^2_0(\mux)}\right)=\inf\left\{\frac{\mathcal E(g,g)}{\|g\|^2_{L^2(\mux)}}: g\in H^1(\mux)\cap L^2_0(\mux),\ g\ne0\right\},
\]
and $\mathcal E(g,g)=\|\nabla_xg\|^2_{L^2(\mux)}$.
The Poincar\'e inequality says this infimum is greater than or equal to $m$. Note that $L^2_0(\mux)$ is exactly the orthogonal complement of $\ker(-\Lo)=\mathrm{span}\{1\}$, the kernel being one-dimensional because $\Omega$ is connected (automatic, $\Omega$ being convex).
The link between $\La$, $\Pv$ and $\Lo$ is the same as in the whole space, now understood as an identity of quadratic forms on $H^1(\Omega)$ rather than of operators on a domain.
For $g,h\in H^1(\Omega)$, identified with their trivial lifts $g(x,v):=g(x)$, using $\int_{\mathbb{R}^{d}} v_iv_j\,\rmd\kappa=\delta_{ij}$,
\begin{equation}\label{eq:form-identity}
\ip{\La g}{\La h}=\int_\Omega\nabla_xg\cdot\nabla_xh\,\rmd\mux=\mathcal E(g,h),
\end{equation}
so that, in operator form on $\Ran\Pv$,
\begin{equation}\label{eq:LaPv-star-LaPv}
(\La\Pv)^*\La\Pv=-\Lo\,\Pv ,
\end{equation}
with $\Lo$ the Neumann realization \eqref{eq:neumann-domain}. No boundary condition is imposed in \eqref{eq:form-identity} --- it is a form identity on all of $H^1$ --- and this is precisely why the Dolbeault--Mouhot--Schmeiser construction automatically selects the Neumann operator: $\Lo$ is \emph{defined} through \eqref{eq:LaPv-star-LaPv}.
\begin{remark}\label{rem:neumann-specular}
The Neumann boundary condition in \eqref{eq:neumann-domain} interacts with the specular class in an essential way: for $h\in D(\Lo)$, the function $\La h=v\cdot\nabla_xh$ satisfies, at $x\in\partial\Omega$,
\[
(R_xv)\cdot\nabla_xh=v\cdot\nabla_xh-2(v\cdot n)\,(n\cdot\nabla_xh)=v\cdot\nabla_xh-2(v\cdot n)\,\partial_nh=v\cdot\nabla_xh,
\]
i.e.\ $\La h$ satisfies the specular symmetry \eqref{eq:specular-class} (in the trace sense). All auxiliary functions appearing in the method below are of this form, so Lemma~\ref{lem:antisym-sob} applies to every integration by parts we perform.
Note that the implication is an \emph{equivalence}: since the identity must hold for all $v$, and $v\mapsto(v\cdot n)$ is not identically zero, $\La h$ is specular if and only if $\partial_nh=0$.
\end{remark}
\subsection{The gap-shifted corrector and the modified functional}
Following \cite{FLL26}, define the gap-shifted corrector and the modified Dolbeault--Mouhot--Schmeiser functional
\begin{equation}\label{eq:Am-def}
\begin{aligned}
&\Am:=(m-\Lo)^{-1}(\La\Pv)^*=-(m-\Lo)^{-1}\Pv\La,
\\
&\mathsf L_m(f):=\frac12\nrm{f}^2-\varepsilon\ip{\Am f}{f},
\end{aligned}
\end{equation}
with $\varepsilon>0$ to be chosen. Since $(\La\Pv)^*$ takes values in $\Ran\Pv$ and $(m-\Lo)^{-1}$ acts in the $x$-variable, $\Am f$ is a function of $x$ alone (hence specular), and moreover $\Am f\in D(\Lo)$ satisfies the Neumann condition. Next, we claim that
\begin{subequations}\label{eq:Am-identities}
\begin{align}
\Am&=\Pv\Am,\label{eq:Am-a}\\
\Am\Pv&=0,\label{eq:Am-b}\\
\Am\La\Pv&=(m-\Lo)^{-1}(-\Lo)\Pv.\label{eq:Am-c}
\end{align}
\end{subequations}
Indeed, the range of $(m-\Lo)^{-1}$ consists of functions of $x$ alone, and $\Pv$ acts as the identity on such functions,
which implies \eqref{eq:Am-a}. By the second identity in \eqref{eq:LaPv},
\[
\Am\Pv=-(m-\Lo)^{-1}\Pv\La\Pv=0,
\]
which implies \eqref{eq:Am-b}.
Finally, by \eqref{eq:LaPv-star-LaPv},
\[
\Am\La\Pv=(m-\Lo)^{-1}(\La\Pv)^*\La\Pv=(m-\Lo)^{-1}(-\Lo)\Pv,
\]
which implies \eqref{eq:Am-c}. Note that \eqref{eq:Am-c} is an operator identity on $\Ran\Pv\cap D(\Lo)$ and extends to $\Ran\Pv$ by boundedness of $(m-\Lo)^{-1}(-\Lo)=\mathrm{Id}-m(m-\Lo)^{-1}$.
The mechanism of the gap shift is unchanged: on the slow component $f_S=\Pv f$, an eigenfunction of $-\Lo$ (Neumann) with eigenvalue $\lambda\ge m$ acquires from \eqref{eq:Am-c} the coercive prefactor $\lambda/(m+\lambda)\ge\frac12$, so even the slowest macroscopic (Neumann) mode contributes at order one. By contrast, the unshifted corrector $(1-\Lo)^{-1}(\La\Pv)^*$ of \cite{DMS09,DMS15} produces the prefactor $\lambda/(1+\lambda)$, which on the slowest mode $\lambda\simeq m$ is only of order $m$; this is the source of the loss of powers of $m$ documented in \cite[Prop.~B.2]{CLW23}.
\subsection{From the reflected SDE to the boundary-value problem}\label{sec:sde-pde}
It is convenient to record the exact relationship between the pathwise description \eqref{eq:sde} of the process and the boundary-value problem \eqref{eq:bke}. Write the velocity reflection as a c\`adl\`ag correction: with $b(x,v):=-\nabla U(x)-\gamma v$ and $\sigma:=\sqrt{2\gamma}$, the system \eqref{eq:sde} is
\begin{equation}\label{eq:sde-jump}
X_t=X_0+\int_0^tV_s\,\rmd s,
\qquad
V_t=V_0+\int_0^tb(X_s,V_s)\,\rmd s+\sigma W_t+\mathcal R^S_t,
\end{equation}
where
\begin{equation}\label{eq:jump-term}
\mathcal R^S_t:=-\sum_{0<s\le t}2\left(V_{s-}\cdot n(X_s)\right)\,n(X_s)\,\mathbf 1_{\partial\Omega}(X_s),
\qquad t\ge0 .
\end{equation}
Thus $V$ is continuous off the collision times $\mathcal T:=\{s>0:X_s\in\partial\Omega\}$ and, at each $s\in\mathcal T$,
\[
\Delta V_s:=V_s-V_{s-}=-2\left(V_{s-}\cdot n(X_s)\right)n(X_s),
\qquad\text{i.e.}\qquad
V_s=R_{X_s}V_{s-},
\]
which is exactly \eqref{eq:sde-reflect} with the reflection map \eqref{eq:reflection-map}. Note that $|V_s|=|V_{s-}|$ by \eqref{eq:R-properties}: the collision is elastic and the Hamiltonian $H$ is conserved across it. We also introduce the \emph{grazing set}
\begin{equation}\label{eq:grazing}
\Gamma_0:=\{(x,v)\in\partial\Omega\times\R^d:\ v\cdot n(x)=0\},
\end{equation}
the set of boundary states at which the reflection \eqref{eq:reflection-map} acts trivially.
The following proposition gives a martingale characterization of the solution $(X,V)$ to \eqref{eq:sde-jump}--\eqref{eq:jump-term}.

\begin{proposition}[The specular class is exactly the class with no jumps]\label{prop:generator}
Let $f\in\mathscr C$ and let $(X,V)$ solve \eqref{eq:sde-jump}--\eqref{eq:jump-term} on $[0,T]$ with $\mathcal T\cap[0,T]$ finite and $(X_s,V_{s-})\notin\Gamma_0$ for every $s\in\mathcal T$. Then
\begin{equation}\label{eq:ito-jump}
\begin{aligned}
f(X_t,V_t)&=f(X_0,V_0)+\int_0^t\Lgen f(X_s,V_s)\,\rmd s
+\sigma\int_0^t\nabla_vf(X_s,V_s)\cdot \rmd W_s
\\
&\qquad\qquad\qquad+\sum_{0<s\le t}\Big[f\left(X_s,R_{X_s}V_{s-}\right)-f\left(X_s,V_{s-}\right)\Big],
\end{aligned}
\end{equation}
with $\Lgen$ the operator in \eqref{eq:bke}. In particular, the jump sum vanishes identically --- so that $f(X_t,V_t)-\int_0^t\Lgen f(X_s,V_s)\,\rmd s$ is a martingale and $\Lgen$ is the infinitesimal generator with no boundary contribution --- \emph{if and only if} $f$ satisfies the specular symmetry \eqref{eq:specular-class}.
\end{proposition}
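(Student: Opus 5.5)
The plan is to prove \eqref{eq:ito-jump} by splitting $[0,t]$ at the finitely many collision times, applying the classical (continuous) It\^o formula on each open excursion, and then summing. Let $0<s_1<\dots<s_k\le t$ be the elements of $\mathcal T\cap[0,t]$ and set $s_0:=0$, $s_{k+1}:=t$. On each interval $(s_{j},s_{j+1})$ the process $X$ stays in $\Omega$ (it touches $\partial\Omega$ only at the $s_j$), the jump term $\mathcal R^S$ is constant, and $(X,V)$ is a continuous semimartingale with $\rmd X=V\,\rmd t$, $\rmd V=b\,\rmd t+\sigma\,\rmd W$. Since $f\in C^2(\overline\Omega\times\R^d)$, It\^o's formula applies on $[s_j+\delta,s_{j+1}-\delta]$. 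The $x$-part contributes $V\cdot\nabla_xf\,\rmd s$, because $X$ has finite variation. The $v$-part contributes $b\cdot\nabla_vf\,\rmd s+\sigma\nabla_vf\cdot\rmd W+\gamma\Delta_vf\,\rmd s$. The drift terms sum exactly to $\Lgen f$ as written in \eqref{eq:bke}. Letting $\delta\downarrow0$ uses continuity of $f$ and the existence of left and right limits of $V$, which the c\`adl\`ag description \eqref{eq:sde-jump} guarantees. This gives
\[
f(X_{s_{j+1}-},V_{s_{j+1}-})-f(X_{s_j},V_{s_j})=\int_{s_j}^{s_{j+1}}\Lgen f\,\rmd s+\sigma\int_{s_j}^{s_{j+1}}\nabla_vf\cdot\rmd W .
\]

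Summing over $j$ telescopes, leaving the boundary corrections $f(X_{s_j},V_{s_j})-f(X_{s_j},V_{s_j-})$ at each collision, where $X$ is continuous. By \eqref{eq:jump-term}, $V_{s_j}=R_{X_{s_j}}V_{s_j-}$, so these corrections are exactly the jump sum in \eqref{eq:ito-jump}. The Lebesgue integrals are unaffected by the finitely many jump times. The non-grazing assumption $(X_s,V_{s-})\notin\Gamma_0$ is used only to make the decomposition legitimate: each collision is a genuine transversal hit, so collision times are isolated and the excursions between them are well defined. The stochastic integral is a true martingale because $|\nabla_vf|\le C(1+|V|)^N$ by \eqref{eq:poly-growth}, and the velocity has Gaussian-type moments uniformly on $[0,T]$. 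This follows from the Ornstein--Uhlenbeck structure, since reflections preserve $|V|$ and $\nabla U$ is bounded on $\overline\Omega$. I expect this moment bound to be the only place needing care; I would obtain it by applying It\^o's formula to $|V|^{2p}$, which is itself specular and therefore has no jumps, followed by Gronwall's inequality.

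For the equivalence, the ``if'' direction is immediate: \eqref{eq:specular-class} makes every summand $f(X_s,R_{X_s}V_{s-})-f(X_s,V_{s-})$ vanish. For ``only if'', the plan is to read ``vanishes identically'' as holding for all admissible trajectories and initial data. Suppose $f(x_0,R_{x_0}v_0)\ne f(x_0,v_0)$ for some $x_0\in\partial\Omega$ and $v_0$. Since $R_{x_0}$ is an involution, we may assume $v_0\cdot n(x_0)>0$, i.e.\ $v_0$ is incoming. By continuity of $f$ and $n$, the same strict inequality holds in a neighbourhood that avoids $\Gamma_0$. Starting the dynamics from a nearby interior point with velocity close to $v_0$, a collision occurs near $(x_0,v_0)$ with positive probability before time $T$, by the support theorem or small-noise continuity of the trajectory on short times. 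On that event the jump sum is nonzero, which is a contradiction. Hence the jump sum vanishes for all trajectories if and only if $f\in\mathscr C$ satisfies \eqref{eq:specular-class}, and in that case $f(X_t,V_t)-\int_0^t\Lgen f\,\rmd s$ is a martingale.
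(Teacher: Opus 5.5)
Your proposal is correct and follows essentially the same route as the paper: It\^o's formula on the excursions between the finitely many collision times, telescoping to collect the jumps $f(X_s,R_{X_s}V_{s-})-f(X_s,V_{s-})$, and, for the converse, launching the process from interior data near an incoming boundary state $(x_0,v_0)$ with $v_0\cdot n(x_0)>0$ so that its first collision lands near that state. The differences are minor. You argue the converse by contradiction on a positive-probability event, whereas the paper takes the limit $\epsilon\downarrow0$ in probability. You also add a moment bound, via the jump-free function $|V|^{2p}$, to justify that the stochastic integral is a true martingale, which the paper leaves implicit.
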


\begin{proof}
Between consecutive collision times the paths of $(X,V)$ are continuous semimartingales and $\mathcal R^S$ is constant, so the classical It\^o formula applies on each such interval and produces
\[
\rmd f=\left(v\cdot\nabla_xf+b\cdot\nabla_vf+\frac{\sigma^2}{2}\Delta_vf\right)\rmd s+\sigma\nabla_vf\cdot\rmd W_s
=\Lgen f\,\rmd s+\sigma\nabla_vf\cdot\rmd W_s ,
\]
using $\sigma^2/2=\gamma$ and $b=-\nabla U-\gamma v$, which is precisely \eqref{eq:bke}. Summing over the (finitely many) intervals and adding the increments of $f$ across the collision times gives \eqref{eq:ito-jump}, the jump of $f$ at $s\in\mathcal T$ being
\[
f(X_s,V_s)-f(X_s,V_{s-})=f(X_s,R_{X_s}V_{s-})-f(X_s,V_{s-}),
\]
since $X$ is continuous.
If $f$ satisfies \eqref{eq:specular-class} then each bracket in \eqref{eq:ito-jump} vanishes because $X_s\in\partial\Omega$. 

Conversely, suppose the jump sum vanishes almost surely for every initial condition. Fix $x_0\in\partial\Omega$ and $v_0$ with $v_0\cdot n(x_0)>0$, and for small $\epsilon>0$ start the process deterministically from $(X_0,V_0)=(x_0-\epsilon v_0,\,v_0)$. On the event that the drift and noise contributions to the velocity remain small on $[0,2\epsilon]$ --- an event of probability tending to one as $\epsilon\downarrow0$ --- the trajectory follows the ray $t\mapsto x_0+(t-\epsilon)v_0$ up to a vanishing error and, since $v_0\cdot n(x_0)>0$ makes the crossing transversal, its first collision time $\tau_\epsilon$ tends to $\epsilon$ and the first collision state satisfies $(X_{\tau_\epsilon},V_{\tau_\epsilon-})\to(x_0,v_0)$ in probability as $\epsilon\downarrow0$. Since each jump vanishes almost surely, $f(X_{\tau_\epsilon},R_{X_{\tau_\epsilon}}V_{\tau_\epsilon-})=f(X_{\tau_\epsilon},V_{\tau_\epsilon-})$, and letting $\epsilon\downarrow0$, the continuity of $f$ and of $x\mapsto n(x)$ gives $f(x_0,R_{x_0}v_0)=f(x_0,v_0)$. As $(x_0,v_0)$ was arbitrary in $\partial\Omega\times\{v\cdot n>0\}$, and as $R_{x_0}$ is an involution which fixes $\{v\cdot n=0\}$ pointwise, \eqref{eq:specular-class} holds for all $v\in\R^d$.
\end{proof}

\subsection{Well-posedness}\label{sec:wp}
The construction of the reflected semigroup and the identification of a core involve kinetic trace theory near the grazing set $\Gamma_0$ defined in \eqref{eq:grazing}, which is not the focus of this paper. We summarize what is known in the literature.

Well-posedness for \eqref{eq:sde} and for the boundary-value problem \eqref{eq:kfp}--\eqref{eq:specularBC} has been studied in the literature, and we do not reprove it here. Existence and uniqueness in law for specularly reflected kinetic SDEs of the form \eqref{eq:sde-jump}--\eqref{eq:jump-term} on bounded $C^3$ domains are established in \cite{BossyJabir15} (for bounded measurable drifts; see also \cite{BossyJabir11,Polytope13,BossyJabirMaftei17,LST24}). The kinetic Fokker--Planck equation \eqref{eq:kfp} with the specular boundary condition \eqref{eq:specularBC} is well posed in the renormalized/trace framework of \cite{Carrillo98,Mischler10}, which also provides a Green formula for $\La$ in the trace spaces $L^2(\partial\Omega\times\R^d;|v\cdot n|\,e^{-H}\rmd v\,\rmd\sigma)$. Finally, strongly continuous $L^2$ semigroups under general Maxwell boundary conditions --- including the vanishing-accommodation endpoint, i.e.\ pure specular reflection --- are constructed in \cite{BCMT23} for potential-free linear kinetic models. 

The two standard properties we actually use about the semigroup, beyond the existence of a core, can be proved directly in the following  proposition.

\begin{proposition}[Invariance and contractivity]\label{prop:invariance}
Let $f,g\in\mathscr C$. Then
\begin{equation}\label{eq:mu-invariant}
\int_{\Omega\times\R^d}\Lgen f\,\rmd\mu=0,
\end{equation}
so that $\mu$ is invariant for \eqref{eq:sde} and $\Lmuz$ is preserved by the flow; and along any solution of \eqref{eq:bke} in $\mathscr C$,
\begin{equation}\label{eq:contractivity}
\frac{\rmd}{\rmd t}\,\frac12\nrm{f(t)}^2=-\gamma\nrm{\nabla_vf(t)}^2\le0 ,
\end{equation}
so the flow is an $\Lmu$-contraction. Moreover $\Lgen$ is reversible up to velocity reversal: writing $(\mathcal Sf)(x,v):=f(x,-v)$, one has $\mathcal S^2=\mathrm{Id}$, $\mathcal S\mathscr C=\mathscr C$, and
\begin{equation}\label{eq:reversibility}
\Lgen^*=\mathcal S\,\Lgen\,\mathcal S\qquad\text{on }\mathscr C,
\end{equation}
where $\Lgen^*$ is the $\Lmu$-adjoint.
\end{proposition}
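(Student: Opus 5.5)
The three claims follow from Lemma~\ref{lem:antisym} and \eqref{eq:Ls-sym}, together with the observation that the velocity flip $\mathcal S$ commutes with the specular class.

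\emph{Invariance.} Take $g\equiv1$, which lies in $\mathscr C$. Lemma~\ref{lem:antisym} gives $\ip{\La f}{1}=-\ip{f}{\La 1}=0$. Likewise \eqref{eq:Ls-sym} gives $\ip{\Ls f}{1}=-\int\nabla_vf\cdot\nabla_v1\,\rmd\mu=0$. Adding the two yields \eqref{eq:mu-invariant}. Consequently, along a solution in $\mathscr C$ we have $\frac{\rmd}{\rmd t}\int f(t)\,\rmd\mu=\int\Lgen f(t)\,\rmd\mu=0$, so $\Lmuz$ is preserved.

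\emph{Contractivity.} Differentiate under the integral, which is justified by the polynomial growth bound \eqref{eq:poly-growth} and dominated convergence, assuming the solution is $C^1$ in $t$ with locally uniform bounds. This gives $\frac{\rmd}{\rmd t}\frac12\nrm{f}^2=\ip{f}{\Lgen f}$. By \eqref{eq:energy} the right-hand side equals $-\gamma\nrm{\nabla_vf}^2\le0$.

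\emph{Reversibility up to velocity reversal.} First, $\mathcal S^2=\mathrm{Id}$ is trivial. Second, $\mathcal S$ maps $\mathscr C$ into itself. The growth bounds are clearly preserved. For the specular condition, note that $R_x$ is linear, so $R_x(-v)=-R_xv$. Hence $(\mathcal Sf)(x,R_xv)=f(x,-R_xv)=f(x,R_x(-v))=f(x,-v)=(\mathcal Sf)(x,v)$.

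Next, compute the conjugates directly. The map $v\mapsto-v$ reverses $\nabla_v$ and preserves $\Delta_v$. It follows that $\mathcal S\La\mathcal S=-\La$ and $\mathcal S\Ls\mathcal S=\Ls$. Therefore $\mathcal S\Lgen\mathcal S=-\La+\gamma\Ls$.

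On the other hand, for $f,g\in\mathscr C$, Lemma~\ref{lem:antisym} and \eqref{eq:Ls-sym} give
\[
\ip{\Lgen f}{g}=\ip{f}{(-\La+\gamma\Ls)g}.
\]
So the $\Lmu$-adjoint restricted to $\mathscr C$ is $-\La+\gamma\Ls=\mathcal S\Lgen\mathcal S$, which is \eqref{eq:reversibility}. An equivalent route uses that $\mu$ is $\mathcal S$-invariant, so $\mathcal S$ is unitary on $\Lmu$.

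\emph{Main obstacle.} The only delicate point is the time differentiation in \eqref{eq:contractivity}. It requires the solution to remain in $\mathscr C$ with growth constants locally uniform in $t$. I would state this as part of the hypothesis "solution in $\mathscr C$", since the paper takes well-posedness and the core from the literature. Everything else is an algebraic consequence of results already proved.
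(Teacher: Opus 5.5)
Your proposal is correct and follows the paper's proof essentially step for step: the pair $(f,1)$ in Lemma~\ref{lem:antisym} together with \eqref{eq:Ls-sym} for invariance, the energy identity \eqref{eq:energy} for contractivity, and the conjugations $\mathcal S\La\mathcal S=-\La$, $\mathcal S\Ls\mathcal S=\Ls$ combined with antisymmetry and symmetry for \eqref{eq:reversibility}. One small point in your conjugation step: the sign change of $v\cdot\nabla_x$ comes from flipping the multiplier $v$, not only $\nabla_v$. Your caveat that the time derivative needs the solution to stay in $\mathscr C$, with growth constants locally uniform in $t$, is a fair refinement of a point the paper leaves implicit.
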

\begin{proof}
\emph{Invariance.} Apply Lemma~\ref{lem:antisym} to the pair $(f,1)$: the constant function $1$ lies in $\mathscr C$, and $\La1=0$, so that
\begin{align}\label{adding:1}
\int_{\Omega\times\mathbb{R}^{d}}\La f\,\rmd\mu=\ip{\La f}{1}=-\ip{f}{\La 1}=0 .
\end{align}
For the symmetric part, \eqref{eq:Ls-sym} with $g=1$ gives
\begin{align}\label{adding:2}
\int_{\Omega\times\mathbb{R}^{d}}\Ls f\,\rmd\mu=\ip{\Ls f}{1}=-\int_{\Omega\times\mathbb{R}^{d}}\nabla_vf\cdot\nabla_v1\,\rmd\mu=0.
\end{align}
By adding \eqref{adding:1} and \eqref{adding:2}, we obtain
\[
\int_{\Omega\times\mathbb{R}^{d}}\Lgen f\rmd\mu=\int_{\Omega\times\mathbb{R}^{d}}\La f\rmd\mu+\gamma\int_{\Omega\times\mathbb{R}^{d}}\Ls f\rmd\mu=0,
\]
which is \eqref{eq:mu-invariant}. Invariance of $\mu$ and preservation of $\Lmuz$ follow since
\[
\frac{\rmd}{\rmd t}\int_{\Omega\times\mathbb{R}^{d}}f(t)\rmd\mu=\int_{\Omega\times\mathbb{R}^{d}}\Lgen f(t)\rmd\mu=0.
\]

\emph{Contractivity.} This is \eqref{eq:energy}:
\[
\frac{\rmd}{\rmd t}\frac12\nrm{f}^2=\ip{f}{\Lgen f}=\ip{f}{\La f}+\gamma\ip{f}{\Ls f},
\]
the first term vanishing by antisymmetry and the second equaling $-\gamma\nrm{\nabla_vf}^2$ by \eqref{eq:Ls-sym}.

\emph{Reversibility up to $\mathcal S$.} First, $\mathcal S$ preserves $\mathscr C$: it clearly preserves regularity and \eqref{eq:poly-growth}, and it preserves \eqref{eq:specular-class} because $R_x$ and $v\mapsto-v$ commute, $R_x(-v)=-R_xv$. Next, $\mathcal S$ is an isometric involution of $\Lmu$, because $\mu$ is even in $v$. A direct computation gives
\[
\mathcal S\La\mathcal S=-\La\qquad\text{and}\qquad
\mathcal S\Ls\mathcal S=\Ls.
\]
Indeed, $(\mathcal S\La\mathcal Sf)(x,v)$ is obtained from $\La$ by the substitution $v\mapsto-v$, which flips the sign of $v\cdot\nabla_x$ and of $\nabla_v$, hence flips the sign of both terms of $\La$, whereas $\Ls=-v\cdot\nabla_v+\Delta_v$ is even. Therefore, by \eqref{eq:antisym} and \eqref{eq:Ls-sym},
\[
\Lgen^*=\La^*+\gamma\Ls^*=-\La+\gamma\Ls=\mathcal S\La\mathcal S+\gamma\,\mathcal S\Ls\mathcal S=\mathcal S\Lgen\mathcal S ,
\]
which is \eqref{eq:reversibility}.
\end{proof}

\section{Main Results}\label{sec:main}
In this section, we present the main result of our paper. Before we proceed, let us first state our main assumption as below. 
\begin{assumption}\label{ass:WP}
The operator $\Lgen$ of \eqref{eq:bke}, endowed with the specular boundary condition, generates a strongly continuous contraction semigroup $(e^{t\Lgen})_{t\ge0}$ on $\Lmu$ preserving $\Lmuz$, the class $\mathscr C_0$ is a core for its infinitesimal generator, the generator coincides on $\mathscr C_0$ with the differential operator in \eqref{eq:bke}, and for $f_0\in D(\Lgen)$ the trajectory $f(t)=e^{t\Lgen}f_0$ satisfies $f\in C([0,\infty);D(\Lgen))\cap C^1([0,\infty);\Lmuz)$.
\end{assumption}

 Now, we are ready to present our main result, which is the following theorem providing
an explicitly computable decay rate for the
underdamped Langevin dynamics with specular reflection in \eqref{eq:sde}.

\begin{theorem}\label{thm:main}
Let $\Omega\subset\R^d$ be a bounded convex domain with $C^{2,1}$ boundary, $U\in C^2(\overline\Omega)$, and let $\mu$ be the Gibbs measure \eqref{eq:gibbs} on $\Omega\times\R^d$. Assume:
\begin{enumerate}
\item[(i)] the position marginal $\mux$ satisfies the Poincar\'e inequality
\begin{equation}\label{eq:poincare}
\nrm[\mux]{g}^2\le\frac1m\,\nrm[\mux]{\nabla_xg}^2,
\qquad\text{for all }g\in H^1(\mux)\text{ with }\textstyle\int_{\Omega} g\,\rmd\mux=0,
\end{equation}
with constant $m>0$;
\item[(ii)] the Hessian lower bound $\nabla^2U(x)\succeq-K\,\mathrm{Id}$ holds on $\Omega$ for some $K\ge0$;
\item[(iii)] the well-posedness Assumption~\ref{ass:WP} holds.
\end{enumerate}
Let $f(t)=e^{t\Lgen}f_0$ solve \eqref{eq:bke} with the specular boundary condition and $f_0\in\Lmuz$. With the friction coefficient
\[
\gamma=\sqrt{16m+2K},
\]
the following decay estimate holds for all $t\ge0$:
\begin{equation}\label{eq:main-decay}
\nrm{f(t)}\le\sqrt3\,e^{-\Lambda t}\nrm{f_0},\qquad
\Lambda=\frac16\,\frac{\sqrt m}{\sqrt{2+\frac{K}{2m}}+\sqrt{4+\frac{K}{2m}}}.
\end{equation}
In particular, when $U$ is convex on $\Omega$ (so $K=0$), the choice $\gamma=4\sqrt m$ yields
\[
\Lambda=\frac{2-\sqrt2}{12}\,\sqrt m.
\]
\end{theorem}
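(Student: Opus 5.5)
The plan is to run the modified $L^2$ argument of Dolbeault--Mouhot--Schmeiser with the gap-shifted corrector $\Am$ of \eqref{eq:Am-def}, following \cite{FLL26}. Every integration by parts will be justified through Lemma~\ref{lem:antisym} and Lemma~\ref{lem:antisym-sob}; this is possible because $\Am f$ is a Neumann function of $x$ alone, so Remark~\ref{rem:neumann-specular} keeps all auxiliary fields in the specular class. By Assumption~\ref{ass:WP} I would work with $f_0\in\mathscr C_0$ and conclude by density. The steps are as follows.

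\textbf{Step 1 (equivalence).} Show that $\|\Am f\|\le\frac{1}{2\sqrt m}\|(1-\Pv)f\|$. Setting $u=\Am f$, one has $(m-\Lo)u=-\Pv\La f$. Testing against $u$ and using \eqref{eq:adjoint-LaPv} together with \eqref{eq:form-identity} gives $m\|u\|^2+\|\nabla_xu\|^2=\ip{f}{\La u}=\ip{(1-\Pv)f}{v\cdot\nabla_xu}\le\|(1-\Pv)f\|\,\|\nabla_xu\|$, and the bound follows. For $\varepsilon$ of order $\sqrt m$, this makes $\mathsf L_m(f)$ comparable to $\frac12\|f\|^2$; the ratio produces the $\sqrt3$ prefactor.

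\textbf{Step 2 (dissipation).} Compute
\[
-\frac{\rmd}{\rmd t}\mathsf L_m=\gamma\|\nabla_vf\|^2+\varepsilon\ip{\Am\La\Pv f}{f}+\varepsilon\ip{\Am\La(1-\Pv)f}{f}+\varepsilon\gamma\ip{\Am\Ls f}{f}-\varepsilon\ip{\La\Am f}{f},
\]
where the last term uses $\Am^*$ (via Lemma~\ref{lem:antisym-sob}) and $\Am\Ls=\Am\Ls(1-\Pv)$. By \eqref{eq:Am-c} and \eqref{eq:gap}, the macroscopic term is at least $\frac12\|\Pv f\|^2$, since the Neumann eigenvalues $\lambda\ge m$ give $\lambda/(m+\lambda)\ge\frac12$. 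The Gaussian Poincar\'e inequality gives $\|\nabla_vf\|\ge\|(1-\Pv)f\|$. The remaining terms are controlled by bounds of the form $\|\Ls$-term$\|\lesssim\|(1-\Pv)f\|/\sqrt m$ and $\|(\Am\La(1-\Pv))^*\|$-type estimates. Both reduce to showing that, for $h=u$ solving $(m-\Lo)u=g$ with $g$ of mean zero,
\[
\|\nabla_x^2u\|^2_{L^2(\mux)}+(m-K)\text{-weighted }\|\nabla_xu\|^2\lesssim\|g\|^2 .
\]
This is needed because $\Pv\La^2$ applied to $v\cdot\nabla_xu$ involves $\nabla_x^2u$, and the $\nabla U$ term requires the Hessian lower bound.

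\textbf{Step 3 (the main obstacle: the second-order estimate).} In \cite{FLL26} the second-order estimate comes from the Bochner identity $\|\Lo u\|^2=\|\nabla^2u\|^2+\int\nabla u\cdot\nabla^2U\nabla u\,\rmd\mux$. Here I would prove a weighted Reilly formula for $u\in D(\Lo)\cap H^3$:
\[
\|\Lo u\|^2=\|\nabla^2u\|^2+\int\nabla u^\top\nabla^2U\nabla u\,\rmd\mux+\frac1{Z_x}\int_{\partial\Omega}\II(\nabla u,\nabla u)e^{-U}\rmd\sigma .
\]
The derivation integrates by parts twice with Lemma~\ref{lem:green}, using $\partial_nu=0$ so that $\nabla u$ is tangential and $\partial_n|\nabla u|^2/2=-\II(\nabla u,\nabla u)$. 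Convexity gives $\II\succeq0$, so the boundary term can be dropped, leaving $\|\nabla^2u\|^2\le\|\Lo u\|^2+K\|\nabla u\|^2$, exactly as in the flat case. The $H^3$ regularity needed here uses the $C^{2,1}$ boundary (Remark~\ref{rem:C21}), and extending to general $u\in D(\Lo)$ is an approximation argument in the graph norm of \eqref{eq:apriori-H2}. This step, including the sign convention for $\II$ and the regularity approximation, is where I expect the real work.

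\textbf{Step 4 (optimization).} Insert the bounds from Steps 2 and 3 into the dissipation to obtain a quadratic form in $(\|\Pv f\|,\|(1-\Pv)f\|)$ with coefficients depending on $\gamma,\varepsilon,m,K$. Then choose $\gamma=\sqrt{16m+2K}$ and $\varepsilon$ proportional to $\sqrt m$ so that $-\frac{\rmd}{\rmd t}\mathsf L_m\ge2\Lambda\,\mathsf L_m$. Grönwall's inequality and the equivalence from Step 1 then give \eqref{eq:main-decay}, and since all constants coincide with those of \cite{FLL26}, Fan--Li--Lu's algebra can be reproduced verbatim.
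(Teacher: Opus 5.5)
Your proposal is correct and follows the paper's proof essentially step for step: the gap-shifted Dolbeault--Mouhot--Schmeiser corrector $\Am$, antisymmetry of $\La$ justified by the specular/Neumann compatibility, the coercive term $\frac12\|\Pv f\|^2$ from $\lambda/(m+\lambda)\ge\frac12$, the weighted Reilly formula with $\II\succeq0$ in place of Bochner, and the $2\times 2$ matrix optimization of \cite{FLL26}. Your variants, namely the sharper bound $\|\Am f\|\le\frac{1}{2\sqrt m}\|(1-\Pv)f\|$ and a quadratic form in $(\|(1-\Pv)f\|,\|\Pv f\|)$ rather than $(\|\nabla_vf\|,\|\Pv f\|)$, lead to the same matrix $M_{\gamma,\varepsilon}$; note, however, that the $\Ls$-term needs only the first-order bound $\|\nabla_x(m-\Lo)^{-1}\Pv f\|\le\frac{1}{2\sqrt m}\|\Pv f\|$, and the Hessian estimate is used only for $\Am\La(1-\Pv)$, where combined with spectral calculus ($\sup_{\lambda\ge m}\lambda/(m+\lambda)^2=\frac{1}{4m}$) it gives the constant $\sqrt{2+K/(2m)}$.
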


\begin{proof}
The proof will be provided in Section~\ref{sec:proof}.
\end{proof}

Note that since $\Omega$ is bounded and $U\in C^2(\overline\Omega)$, both hypotheses (i)--(ii) are automatically satisfied for \emph{some} finite $m>0$ and $K\ge0$; the content of Theorem~\ref{thm:main} is the explicit and sharply scaled dependence of the rate on these constants. Indeed $\Omega$ is bounded, convex (hence connected) with $C^{2,1}$ (hence Lipschitz) boundary, so the Neumann Poincar\'e inequality holds on $\Omega$ for the Lebesgue measure, and the weight $e^{-U}$ is bounded above and below on $\overline\Omega$; and $\nabla^2U$ is continuous on the compact set $\overline\Omega$, so that
\[
K:=\max\left\{0,-\min\nolimits_{x\in\overline\Omega}\lambda_{\min}\left(\nabla^2U(x)\right)\right\}<\infty.
\]
A case of particular interest is $U\equiv0$: the dynamics is then a ``stochastic billiard with friction'' in a convex domain, $K=0$, and $m=\lambda_1^{\mathrm N}(\Omega)$ is the Neumann spectral gap of $\Omega$, for which the Payne--Weinberger inequality \cite{PW60} (see \cite{Bebendorf03} for a correction of the original proof) gives $m\ge\pi^2/(\diam(\Omega))^2$; see Example~\ref{ex:billiard}.
Beyond the statement in Theorem~\ref{thm:main} itself, we wish to emphasize three structural points, which we believe are of independent interest for transferring $L^2$ hypocoercivity methods to boundary-value problems.

\emph{(a) Specular symmetry and antisymmetry of the transport operator.} The natural functional class for \eqref{eq:bke} consists of functions symmetric under the boundary reflection, $f(x,v)=f(x,R_xv)$ for $x\in\partial\Omega$. On this class the transport operator $\La$ is antisymmetric in $\Lmu$: the boundary flux term produced by integration by parts is odd under the measure-preserving involution $v\mapsto R_xv$ and therefore vanishes (Lemma~\ref{lem:antisym}). This is the exact $L^2$ analogue of energy conservation at specular collisions.

\emph{(b) The corrector selects the Neumann realization.} In a domain, the resolvent $(m-\Lo)^{-1}$ in \eqref{eq:gap-corrector} requires a self-adjoint realization of $\Lo=\Delta_x-\nabla U\cdot\nabla_x$. The algebra of the Dolbeault--Mouhot--Schmeiser method itself dictates the choice: the operator $(\La\Pv)^*\La\Pv$ is by construction the operator represented by the Dirichlet form $\int_{\Omega}|\nabla_xg|^2\rmd\mux$ on $H^1(\mux)$, i.e.\ the \emph{Neumann} realization $\Lo^{\mathrm N}$. Moreover, the Neumann boundary condition satisfied by elements of $D(\Lo^{\mathrm N})$ is precisely the condition under which functions of the form $v\cdot\nabla_xh(x)$ are specularly symmetric:
\[
(R_xv)\cdot\nabla h=v\cdot\nabla h-2(v\cdot n)\,\partial_nh=v\cdot\nabla h
\quad\text{on }\partial\Omega\iff\partial_nh=0 .
\]
Hence, every auxiliary function generated by the corrector lies in the specular class, and all integrations by parts in the method proceed without boundary contributions. This is consistent with the fact that the overdamped limit of specularly reflected Langevin dynamics is the normally reflected (Neumann) diffusion; see \cite{CesbronHutridurga16} and Remark~\ref{rem:overdamped}.

\emph{(c) Reilly instead of Bochner.} The only estimate of \cite{FLL26} in which the spatial domain genuinely enters is the second-order bound $\|\nabla^2_xh\|^2\le\|\Lo h\|^2+K\|\nabla_xh\|^2$, proved there via the Bochner identity. On a domain, the weighted Bochner identity integrates to a weighted Reilly formula carrying the boundary term $\int_{\partial\Omega}\II(\nabla h,\nabla h)\,e^{-U}\rmd\sigma$ for Neumann functions $h$, where $\II$ is the second fundamental form of $\partial\Omega$. Convexity of $\Omega$ gives $\II\succeq0$, so the boundary term has a favorable sign and the flat-space inequality survives unchanged (Lemma~\ref{lem:reilly}).

As a corollary of Theorem~\ref{thm:main}, we obtain the quantitative convergence rates in $\chi^{2}$-divergence,
total variation (TV) and Wasserstein distances.
\begin{corollary}[Convergence of the law in $\chi^2$-divergence, total variation and Wasserstein distances]\label{cor:divergences}
Assume the hypotheses of Theorem~\ref{thm:main}. Let $\nu_0$ be a probability measure on $\overline\Omega\times\R^d$ which is absolutely continuous with respect to $\mu$ with density $h_0=\rmd\nu_0/\rmd\mu\in\Lmu$, i.e.\ with finite $\chi^2$-divergence
$\chi^2(\nu_0\,\|\,\mu)<\infty$,
and let $\nu_t$ denote the law of $(X_t,V_t)$ solving \eqref{eq:sde} with $(X_0,V_0)\sim\nu_0$. Write $\nu_t^x$ for the position marginal of $\nu_t$. Then for all $t\ge0$:
\begin{align}
\chi^2(\nu_t\,\|\,\mu)&\le3\,e^{-2\Lambda t}\,\chi^2(\nu_0\,\|\,\mu),\label{eq:chi2}\\
\chi^2(\nu_t^x\,\|\,\mux)&\le3\,e^{-2\Lambda t}\,\chi^2(\nu_0\,\|\,\mu),\label{eq:chi2x}\\
\mathrm{TV}(\nu_t,\mu)&\le\frac{\sqrt3}{2}\,e^{-\Lambda t}\,\sqrt{\chi^2(\nu_0\,\|\,\mu)},\label{eq:tv}\\
\mathcal W_1(\nu_t^x,\mux)&\le\frac{\sqrt3}{2}\,\diam(\Omega)\,e^{-\Lambda t}\,\sqrt{\chi^2(\nu_0\,\|\,\mu)},\label{eq:W1x}\\
\mathcal W_2(\nu_t^x,\mux)&\le\left(\frac{3}{4}\right)^{1/4}\diam(\Omega)\,e^{-\Lambda t/2}\,\left(\chi^2(\nu_0\,\|\,\mu)\right)^{1/4},\label{eq:W2x}\\
\mathcal W_1(\nu_t,\mu)&\le\sqrt3\,\left(\left(\diam(\Omega)\right)^2+d\right)^{1/2}\,e^{-\Lambda t}\,\sqrt{\chi^2(\nu_0\,\|\,\mu)}.\label{eq:W1joint}
\end{align}
\end{corollary}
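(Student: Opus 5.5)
The plan is to reduce every estimate to Theorem~\ref{thm:main} applied to the density $h_t:=\rmd\nu_t/\rmd\mu$. The first step is to justify that $h_t=e^{t\Lgen^*}h_0$. By Proposition~\ref{prop:invariance} we have $\Lgen^*=\mathcal S\Lgen\mathcal S$, so $h_t=\mathcal S e^{t\Lgen}\mathcal S h_0$. To see this, pair the law with test functions $\varphi\in\mathscr C$: $\int\varphi\,\rmd\nu_t=\int h_0\,e^{t\Lgen}\varphi\,\rmd\mu$, and then use duality together with the core property in Assumption~\ref{ass:WP}. Since $\mathcal S$ is an isometric involution of $\Lmu$ that preserves $\Lmuz$ and commutes with subtracting constants, we apply Theorem~\ref{thm:main} to $f_0=\mathcal S(h_0-1)\in\Lmuz$. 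This gives $\nrm{h_t-1}\le\sqrt3e^{-\Lambda t}\nrm{h_0-1}$. Squaring, and using $\chi^2(\nu\|\mu)=\nrm{h-1}^2$, yields \eqref{eq:chi2}. The main obstacle is this identification of the law's density with the adjoint semigroup, which requires that $\nu_t$ is indeed given by the semigroup of Assumption~\ref{ass:WP}. I would treat it as part of the well-posedness input and state it explicitly, rather than prove it.

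For \eqref{eq:chi2x}, the position density is $h^x_t=\Pv h_t$. Since $\Pv$ is an orthogonal projection fixing constants, $\nrm[\mux]{\Pv h_t-1}\le\nrm{h_t-1}$, which is exactly the data-processing inequality. For \eqref{eq:tv}, write $\mathrm{TV}=\frac12\int|h_t-1|\,\rmd\mu\le\frac12\nrm{h_t-1}$ by Cauchy--Schwarz.

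For the Wasserstein bounds, first note that any coupling of two measures on $\overline\Omega$ has transport cost at most $\diam(\Omega)^p$ times the mass moved. For $\mathcal W_1$ I use $\mathcal W_1\le\diam(\Omega)\,\mathrm{TV}$ and then the TV bound on the marginal, which is dominated by \eqref{eq:tv}; this gives \eqref{eq:W1x}. For $\mathcal W_2$, the estimate $\mathcal W_2^2\le\diam(\Omega)^2\,\mathrm{TV}$ comes from the maximal coupling. Taking square roots yields the factor $(\sqrt3/2)^{1/2}=(3/4)^{1/4}$ and the rate $e^{-\Lambda t/2}$, which is \eqref{eq:W2x}.

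For the joint bound \eqref{eq:W1joint}, velocities are unbounded, so instead use the weighted-TV bound
\[
\mathcal W_1(\nu,\mu)\le\int|z-z_0|\,|h-1|\,\rmd\mu(z).
\]
Here $z=(x,v)$ and $z_0=(x_0,0)$ with $x_0\in\overline\Omega$. Applying Cauchy--Schwarz gives the bound $\big(\int|z-z_0|^2\rmd\mu\big)^{1/2}\nrm{h-1}$, and
\[
\int|z-z_0|^2\rmd\mu\le\diam(\Omega)^2+\int|v|^2\rmd\kappa=\diam(\Omega)^2+d.
\]
Combining with the $L^2$ decay gives the constant $\sqrt3(\diam(\Omega)^2+d)^{1/2}$.
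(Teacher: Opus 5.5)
Your proof is correct. From \eqref{eq:chi2x} onward it coincides with the paper's: the projection $\Pv$ for the position marginal, Cauchy--Schwarz for TV, the maximal coupling for $\mathcal W_1,\mathcal W_2$ of the position marginal, and Kantorovich--Rubinstein with $z_0=(x_0,0)$ for the joint law. Like the paper, you take the identification of the law with the $L^2$ semigroup, $h_t=(e^{t\Lgen})^*h_0$, as part of the well-posedness input.

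The genuine difference is how you pass from the adjoint semigroup to Theorem~\ref{thm:main}. The paper first notes that $\Lmuz$ is invariant under the adjoint semigroup, which follows from $e^{t\Lgen}1=1$. It then uses the Hilbert-space fact $\|T^*\|=\|T\|$ for $T=e^{t\Lgen}|_{\Lmuz}$. This needs no structural information about $\Lgen$ at all.

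You instead use the velocity reversal: you write $h_t=\mathcal S e^{t\Lgen}\mathcal S h_0$ and apply Theorem~\ref{thm:main} to $\mathcal S(h_0-1)$. This works, and it has the merit of exhibiting the density as a velocity-reflected copy of the forward Kolmogorov flow, so invariance of $\Lmuz$ comes for free.

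It does cost an extra step that you only gesture at with ``duality together with the core property.'' Proposition~\ref{prop:invariance} gives $\Lgen^*=\mathcal S\Lgen\mathcal S$ only as a formal identity on $\mathscr C$. Lifting it to semigroups takes two steps:
\begin{itemize}
\item Show that $\mathcal S\Lgen\mathcal S$, with domain $\mathcal S D(\Lgen)$ and core $\mathcal S\mathscr C_0=\mathscr C_0$, is contained in the closed operator $\Lgen^*$. Use the core property to extend the pairing identity to $f\in D(\Lgen)$.
\item Conclude equality from the fact that both generate contraction semigroups: for $\lambda>0$, $\lambda-\mathcal S\Lgen\mathcal S$ is onto and $\lambda-\Lgen^*$ is injective, so the extension must be trivial.
\end{itemize}
Only then do you get $e^{t\Lgen^*}=\mathcal S e^{t\Lgen}\mathcal S$.

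Since $\|T^*\|=\|T\|$ already closes the argument once $h_t=(e^{t\Lgen})^*h_0$ is known, the detour through $\mathcal S$ is correct but unnecessary.
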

\begin{proof}
\emph{Step 1: the density evolves under the adjoint semigroup.} Let $h_t:=\rmd\nu_t/\rmd\mu$, which exists for all $t$ by the computation that follows. For a bounded measurable test function $\psi$,
\[
\int_{\Omega\times\mathbb{R}^{d}}\psi\,\rmd\nu_t=\mathbb E\big[\psi(X_t,V_t)\big]=\int_{\Omega\times\mathbb{R}^{d}} \left(e^{t\Lgen}\psi\right)\,\rmd\nu_0=\ip{e^{t\Lgen}\psi}{h_0}=\ip{\psi}{(e^{t\Lgen})^*h_0},
\]
so that 
\[
h_t=\left(e^{t\Lgen}\right)^*h_0=e^{t\Lgen^*}h_0, 
\]
where $\Lgen^*$ is the $\Lmu$-adjoint of $\Lgen$. (The second equality in the above equation used the identification $\mathbb E\big[\psi(X_t,V_t)\big]=\int_{\Omega\times\mathbb{R}^{d}}e^{t\Lgen}\psi\,\rmd\nu_0$ of the $L^2$ semigroup with the transition semigroup of \eqref{eq:sde}; this follows from the martingale property of Proposition~\ref{prop:generator} on the core $\mathscr C_0$ together with Assumption~\ref{ass:WP}, and we take it for granted in what follows.) Since $\mu$ is invariant, $e^{t\Lgen}1=1$, and hence for $h\in\Lmuz$,
\[
\int_{\Omega\times\mathbb{R}^{d}} e^{t\Lgen^*}h\,\rmd\mu=\ip{e^{t\Lgen^*}h}{1}=\ip{h}{e^{t\Lgen}1}=\ip{h}{1}=0,
\]
so $\Lmuz$ is invariant under the adjoint semigroup as well. Applying this with $h=h_0-1\in\Lmuz$ (it has mean zero because $\nu_0$ and $\mu$ are both probability measures) gives
\begin{equation}\label{eq:density-adjoint}
h_t-1=e^{t\Lgen^*}(h_0-1).
\end{equation}

\emph{Step 2: the adjoint semigroup obeys the same bound.} For any bounded operator $T$ on a Hilbert space, $\|T^*\|=\|T\|$. Applying this to $T=e^{t\Lgen}$ restricted to the invariant subspace $\Lmuz$, Theorem~\ref{thm:main} gives
\[
\big\|e^{t\Lgen^*}\big\|_{\Lmuz\to\Lmuz}=\big\|e^{t\Lgen}\big\|_{\Lmuz\to\Lmuz}\le\sqrt3\,e^{-\Lambda t}.
\]
Combined with \eqref{eq:density-adjoint} this yields
\[
\chi^2(\nu_t\|\mu)=\nrm{h_t-1}^2\le3e^{-2\Lambda t}\nrm{h_0-1}^2=3e^{-2\Lambda t}\chi^2(\nu_0\|\mu),
\]
which is \eqref{eq:chi2}.

\emph{Step 3: the position marginal.} Since $\mu=\mux\otimes\kappa$ is a product measure, the density of $\nu_t^x$ with respect to $\mux$ is exactly the velocity average of $h_t$: for bounded $\psi=\psi(x)$,
\[
\int_{\Omega}\psi\,\rmd\nu_t^x=\int_{\Omega\times\mathbb{R}^{d}}\psi\,h_t\,\rmd\mu=\int_\Omega\psi(x)\left(\int_{\R^d}h_t(x,v)\rmd\kappa(v)\right)\rmd\mux(x),
\]
so that $\rmd\nu_t^x/\rmd\mux=\Pv h_t$. As $\Pv$ is an orthogonal projection in $\Lmu$ and $\Pv1=1$,
\[
\chi^2(\nu_t^x\|\mux)=\nrm[\mux]{\Pv h_t-1}^2=\nrm{\Pv(h_t-1)}^2\le\nrm{h_t-1}^2=\chi^2(\nu_t\|\mu),
\]
which with \eqref{eq:chi2} gives \eqref{eq:chi2x}. (This is the data-processing inequality for $\chi^2$, in the special case where the marginalization is an orthogonal projection.)

\emph{Step 4: total variation.} By Cauchy--Schwarz inequality in $\Lmu$,
\[
\mathrm{TV}(\nu_t,\mu)=\frac12\int_{\Omega\times\mathbb{R}^{d}}|h_t-1|\,\rmd\mu\le\frac12\left(\int_{\Omega\times\mathbb{R}^{d}}|h_t-1|^2\rmd\mu\right)^{1/2}=\frac12\sqrt{\chi^2(\nu_t\|\mu)},
\]
and \eqref{eq:chi2} gives \eqref{eq:tv}. The same computation applied to $\nu_t^x$ and using \eqref{eq:chi2x} gives
\[
\mathrm{TV}(\nu^x_t,\mux)\le\frac{\sqrt3}{2}e^{-\Lambda t}\sqrt{\chi^2(\nu_0\|\mu)}.
\]

\emph{Step 5: Wasserstein distances for the position marginal.} Let $(X,Y)$ be a \emph{maximal coupling} of $\nu_t^x$ and $\mux$, so that $\mathbb P(X\ne Y)=\mathrm{TV}(\nu_t^x,\mux)$; such a coupling exists by the coupling characterization of the total variation distance, see e.g.\ \cite[Ch.~I]{Lindvall02} or \cite[Ch.~6]{VillaniOT09}. Since both marginals are supported in $\overline\Omega$, we have $|X-Y|\le\diam(\Omega)$ almost surely, and $|X-Y|=0$ on $\{X=Y\}$. Therefore, for $p\in\{1,2\}$,
\[
\left(\mathcal W_p(\nu_t^x,\mux)\right)^p\le\mathbb E|X-Y|^p\le\left(\diam(\Omega)\right)^p\,\mathbb P(X\ne Y)=\left(\diam(\Omega)\right)^p\,\mathrm{TV}(\nu_t^x,\mux).
\]
Taking $p=1$ and inserting Step~4 gives \eqref{eq:W1x}; taking $p=2$ and a square root gives
\[
\mathcal W_2(\nu_t^x,\mux)\le\diam(\Omega)\,\left(\mathrm{TV}(\nu_t^x,\mux)\right)^{1/2}\le\diam(\Omega)\left(\frac{\sqrt3}{2}\right)^{1/2}e^{-\Lambda t/2}\left(\chi^2(\nu_0\|\mu)\right)^{1/4},
\]
which yields \eqref{eq:W2x}.

\emph{Step 6: the joint law.} Here the velocity variable is unbounded, so we use the Kantorovich--Rubinstein duality \cite[Ch.~5]{VillaniOT09} directly. Fix $z_0=(x_0,0)$ with $x_0\in\Omega$. For $\psi$ $1$-Lipschitz on $\R^{2d}$ we may subtract the constant $\psi(z_0)$ without changing $\int_{\Omega\times\mathbb{R}^{d}}\psi\,\rmd(\nu_t-\mu)$, and then $|\psi(z)|\le|z-z_0|$. Hence, by Cauchy--Schwarz inequality,
\begin{align*}
\mathcal W_1(\nu_t,\mu)
&=\sup_{\mathrm{Lip}(\psi)\le1}\int_{\Omega\times\mathbb{R}^{d}}\psi(h_t-1)\rmd\mu
\\
&\le\int_{\Omega\times\mathbb{R}^{d}}|z-z_0|\,|h_t-1|\,\rmd\mu
\le\left(\int_{\Omega\times\mathbb{R}^{d}}|z-z_0|^2\rmd\mu\right)^{1/2}\nrm{h_t-1}.
\end{align*}
Finally,
\[
\int_{\Omega\times\mathbb{R}^{d}}|z-z_0|^2\rmd\mu=\int_\Omega|x-x_0|^2\rmd\mux+\int_{\R^d}|v|^2\rmd\kappa\le\left(\diam(\Omega)\right)^2+d,
\]
and \eqref{eq:chi2} concludes.
\end{proof}
In the unconstrained case when $U$ is convex, we know the convergence rate $O(\sqrt{m})$ is optimal \cite{FLL26}.
But for the constrained case in Theorem~\ref{thm:main}, to the best of our knowledge,
it has never been investigated in the literature whether
the scaling $O(\sqrt{m})$ is optimal. By leveraging the fact that
the specularly reflected dynamics with $U\equiv0$ on an interval has \emph{explicit eigenfunctions},
we obtain the following proposition, which itself is a genuinely new contribution.
Optimality of the scaling is meant here in the worst-case sense over the class of potentials covered by Theorem~\ref{thm:main} with a given Poincar\'e constant: the family $d=1$, $U\equiv0$, $L=\pi/\sqrt m$ realizes every value of $m>0$ within this class, so that no choice of the friction can yield a rate better than $O(\sqrt m)$ uniformly over the class.
\begin{proposition}[The $\sqrt m$ scaling is optimal for the constrained problem]\label{prop:sharp}
Let $d=1$, $\Omega=(0,L)$ and $U\equiv0$, so that $K=0$ and $m=\lambda_1^{\mathrm N}(\Omega)=\pi^2/L^2$, and let Assumption~\ref{ass:WP} hold. Define the optimal $L^2$ decay rate of the specularly reflected semigroup,
\[
\Lambda_{\mathrm{opt}}(\gamma):=\sup\Big\{\Lambda\ge0:\ \exists\,C<\infty \text{ with } \big\|e^{t\Lgen}\big\|_{\Lmuz\to\Lmuz}\le Ce^{-\Lambda t}\ \text{ for all }t\ge0\Big\}.
\]
Then
\begin{equation}\label{eq:sharp-bound}
\Lambda_{\mathrm{opt}}(\gamma)\ \le\ \min\Big\{\frac m\gamma,\ 2\gamma\Big\}\qquad\text{for every }\gamma>0,
\end{equation}
and consequently,
\begin{equation}\label{eq:sharp-sup}
\sup_{\gamma>0}\Lambda_{\mathrm{opt}}(\gamma)\ \le\ \sqrt{2m},
\end{equation}
where the supremum over $\gamma$ of the right-hand side of \eqref{eq:sharp-bound} is attained at $\gamma=\sqrt{m/2}$. In particular, no choice of the friction coefficient produces a decay rate better than $O(\sqrt m)$, so that the scaling of Theorem~\ref{thm:main} cannot be improved.
\end{proposition}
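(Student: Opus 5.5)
The plan is to exhibit, for each $\gamma>0$, two explicit mean-zero eigenfunctions of $\Lgen$ in the specular class $\mathscr C_0$, with real eigenvalues $-m/\gamma$ and $-2\gamma$. Any such eigenfunction forces a lower bound on the operator norm of the semigroup, and hence the upper bound \eqref{eq:sharp-bound}. Here $d=1$, $\Omega=(0,L)$ and $U\equiv0$, so $\Lgen=v\partial_x+\gamma(-v\partial_v+\partial_v^2)$. The specular condition reads $f(0,v)=f(0,-v)$ and $f(L,v)=f(L,-v)$.

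\emph{Step 1: the spatial mode.} Let $k=\pi/L$, so $k^2=m$. In the whole space the exact Gaussian-shift eigenfunction is $e^{ik(x+v/\gamma)}$. Indeed, for $g=e^{\beta v}$ one has $\Ls g=(\beta^2-\beta v)g$, so choosing $\beta=ik/\gamma$ cancels the transport term $ikv$ and gives eigenvalue $\gamma\beta^2=-k^2/\gamma$. Since $\Lgen$ has real coefficients, the real part
\[
f_1(x,v):=\cos\bigl(k(x+v/\gamma)\bigr)=\cos(kx)\cos(kv/\gamma)-\sin(kx)\sin(kv/\gamma)
\]
satisfies $\Lgen f_1=-(m/\gamma)f_1$; I would also verify this directly. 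It is specular: at $x=0$ and $x=L$ the factor $\sin(kx)$ vanishes and $\cos(kx)=\pm1$, leaving the even function $\pm\cos(kv/\gamma)$. It is smooth and bounded with bounded derivatives, so \eqref{eq:poly-growth} holds. Its $\mu$-mean is zero because $\int_0^L\cos(kx)\,\rmd x=\int_0^L\sin(kx)\cdot(\text{odd in }v)=0$. Hence $f_1\in\mathscr C_0$.

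\emph{Step 2: the velocity mode.} Take $f_2(x,v):=v^2-1$, the second Hermite polynomial. It is $x$-independent, so $\La f_2=0$, and $\Ls f_2=-2v^2+2=-2f_2$; thus $\Lgen f_2=-2\gamma f_2$. It is even in $v$, hence specular, has polynomial growth, and has $\kappa$-mean zero, so $f_2\in\mathscr C_0$. (The first Hermite function $v$ would give the better eigenvalue $-\gamma$, but it is odd and therefore not specular. This is why the bound is $2\gamma$ rather than $\gamma$.)

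\emph{Step 3: from eigenfunctions to decay bounds.} By Assumption~\ref{ass:WP}, $\mathscr C_0$ lies in $D(\Lgen)$ and the generator acts there as the differential operator. So for an eigenfunction $f\in\mathscr C_0$ with $\Lgen f=\lambda f$, the curve $u(t)=e^{t\Lgen}f$ satisfies $u'=e^{t\Lgen}\Lgen f=\lambda u$, hence $u(t)=e^{\lambda t}f$. Then $\|e^{t\Lgen}\|_{\Lmuz\to\Lmuz}\ge e^{\lambda t}$, and any admissible $\Lambda$ satisfies $e^{\lambda t}\le Ce^{-\Lambda t}$ for all $t$, i.e.\ $\Lambda\le-\lambda$. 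Applying this to $f_1$ and $f_2$ gives $\Lambda_{\mathrm{opt}}(\gamma)\le\min\{m/\gamma,2\gamma\}$. The first entry decreases in $\gamma$ and the second increases, so the minimum is maximized where $m/\gamma=2\gamma$, i.e.\ at $\gamma=\sqrt{m/2}$, with value $\sqrt{2m}$. This gives \eqref{eq:sharp-sup}.

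The only genuinely nontrivial step is finding the spatial eigenfunction $f_1$ and checking that it respects the specular condition at both endpoints. The Gaussian-shift ansatz $e^{ik(x+v/\gamma)}$ with $kL=\pi$ handles this: the $\sin(kx)$ part carries all the odd-in-$v$ dependence and vanishes exactly on $\partial\Omega$. The rest is routine bookkeeping.
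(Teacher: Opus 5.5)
Your proposal is correct and matches the paper's proof: it uses the same two eigenfunctions $f_1=\cos\bigl(k(x+v/\gamma)\bigr)$ and $f_2=v^2-1$ in $\mathscr C_0$, with eigenvalues $-m/\gamma$ and $-2\gamma$. It then invokes Assumption~\ref{ass:WP} to get $e^{t\Lgen}f_i=e^{-\lambda_i t}f_i$ and concludes in the same way. Your Gaussian-shift derivation of $f_1$ and your explicit ODE justification of the semigroup action are slightly more detailed than the paper's direct verification, but the argument is the same.
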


\begin{proof}
Both bounds come from exact eigenfunctions computations. Let $k:=\pi/L$, so that $m=k^2$, and set
\[
f_1(x,v):=\cos\left(k\left(x+\frac v\gamma\right)\right),
\qquad
f_2(x,v):=v^2-1 .
\]
Both $f_1$ and $f_2$ are smooth, and $f_1$ is bounded together with all its derivatives while $|\partial^\alpha f_2|\le C(1+|v|)^2$, so \eqref{eq:poly-growth} holds. In $d=1$ the reflection map is $R_xv=-v$ at both endpoints, so \eqref{eq:specular-class} requires evenness in $v$ at $x\in\{0,L\}$: for $f_2$ this is clear, and for $f_1$,
\begin{align*}
&f_1(0,-v)=\cos\left(-kv/\gamma\right)=f_1(0,v),
\\
&f_1(L,-v)=\cos\left(\pi-\frac{kv}{\gamma}\right)=-\cos\frac{kv}\gamma=f_1(L,v),
\end{align*}
using $kL=\pi$. Both have $\mu$-mean zero: $\int_{\R}(v^2-1)\rmd\kappa=0$, and
\[
\int_{\R}f_1(x,v)\rmd\kappa(v)=\Re\left(e^{ikx}\!\int_\R e^{ikv/\gamma}\rmd\kappa(v)\right)=e^{-k^2/(2\gamma^2)}\cos(kx),
\]
where
\[
\int_0^L\cos(kx)\,\frac{\rmd x}{L}=0.
\]
Both $f_1$ and $f_2$ are eigenfunctions. Indeed, with $U\equiv0$, one has
\[
\Lgen=v\partial_x+\gamma\left(-v\partial_v+\partial_v^2\right).
\]
Writing $\xi:=x+v/\gamma$, so that $\partial_xf_1=-k\sin(k\xi)$, $\partial_vf_1=-\frac k\gamma\sin(k\xi)$ and $\partial_v^2f_1=-\frac{k^2}{\gamma^2}\cos(k\xi)$,
\[
\Lgen f_1=-kv\sin(k\xi)+\gamma\left(\frac{kv}{\gamma}\sin(k\xi)-\frac{k^2}{\gamma^2}\cos(k\xi)\right)
=-\frac{k^2}{\gamma}f_1=-\frac m\gamma\,f_1 ,
\]
the transport and friction terms canceling exactly, and
\[
\Lgen f_2=\gamma(-2v^2+2)=-2\gamma f_2.
\]
By Assumption~\ref{ass:WP}, $f_i\in\mathscr C_0\subset D(\Lgen)$, so that
\[
e^{t\Lgen}f_i=e^{-\lambda_it}f_i,
\]
with $\lambda_1=m/\gamma$ and $\lambda_2=2\gamma$.
Hence,
\[
\left\|e^{t\Lgen}\right\|_{\Lmuz\to\Lmuz}\ge e^{-\min(\lambda_1,\lambda_2)t},\qquad\text{for all $t$},
\]
which is \eqref{eq:sharp-bound}. Finally, $\min\{m/\gamma,2\gamma\}$ is maximized over $\gamma>0$ at $\gamma=\sqrt{m/2}$, with value $\sqrt{2m}$, giving \eqref{eq:sharp-sup}.
\end{proof}

\begin{example}[Stochastic billiard with friction]\label{ex:billiard}
Take $U\equiv0$ on a bounded convex $C^{2,1}$ domain $\Omega$, so the process is pure kinetic Brownian motion with damping, specularly reflected at $\partial\Omega$. Then $K=0$, $\mux$ is the normalized Lebesgue measure on $\Omega$, and $m=\lambda_1^{\mathrm N}(\Omega)$ is the Neumann spectral gap, bounded below by the Payne--Weinberger inequality $m\ge\pi^2/(\diam(\Omega))^2$ \cite{PW60,Bebendorf03}. Theorem~\ref{thm:main} gives, with $\gamma=4\sqrt m$,
\[
\nrm{f(t)}\le\sqrt3\,\exp\left(-\frac{2-\sqrt2}{12}\sqrt m\,t\right)\nrm{f_0}
\qquad\text{with}\quad \sqrt m\ \ge\ \frac{\pi}{\diam(\Omega)} ,
\]
and hence the completely explicit, dimension-free rate
\[
\Lambda\ \ge\ \frac{2-\sqrt2}{12}\cdot\frac{\pi}{\diam(\Omega)}.
\]
The $O(\sqrt m)$ scaling, i.e.\ mixing in time of order $\diam(\Omega)$ rather than $(\diam(\Omega))^2$ for the friction tuned as above, reflects the ballistic transport enabled by the underdamped dynamics and is the same acceleration phenomenon as in the whole space \cite{CLW23,FLL26}.
This example is also, in dimension $d=1$, exactly the setting in which the $\sqrt m$ scaling can be shown to be \emph{optimal}; see Proposition~\ref{prop:sharp}.
\end{example}
\begin{remark}[Overdamped consistency]\label{rem:overdamped}
The appearance of the Neumann overdamped infinitesimal generator in the corrector is consistent with the diffusive limit: as $\gamma\to\infty$ (after time rescaling $t\mapsto\gamma t$), the specularly reflected underdamped Langevin dynamics converges to the normally reflected overdamped diffusion
\[
\rmd X_t=-\nabla U(X_t)\rmd t+\sqrt2\,\rmd W_t-n(X_t)\ell(\rmd t),
\]
whose infinitesimal generator is precisely $\Lo$ with Neumann boundary conditions. At the level of the SDE this is the Smoluchowski--Kramers approximation for the Langevin equation with elastic reflection, established in \cite{Spiliopoulos07}; at the level of the kinetic Fokker--Planck equation it is the diffusion limit with specular boundary conditions derived in \cite{CesbronHutridurga16}. The gap-shifted corrector thus couples the kinetic dynamics to the correct macroscopic boundary behavior, which is what allows the sharp rate to survive the presence of the boundary.
\end{remark}

\section{Key Lemmas}\label{sec:lemmas}

In this section, we present the technical lemmas that will be used in the proof of our main result (Theorem~\ref{thm:main}).
Two standard estimates are taken over unchanged from the whole-space argument of \cite{FLL26}. First, the Gaussian Poincar\'e inequality in the velocity variable (see e.g.\ \cite[Prop.~2.7.7 and \S4.1]{BGL14}): for $f\in H^1(\kappa)$,
\begin{equation}\label{eq:gauss-poincare}
\nrm[\kappa]{(1-\Pv)f}^2\le\nrm[\kappa]{\nabla_vf}^2 .
\end{equation}
Second, and this is the only genuinely domain-dependent ingredient, we need the second-order estimate on the Neumann resolvent. In the whole space this follows from the Bochner identity \cite[Ch.~1, Ch.~3]{BGL14}; on a domain, the correct statement is a weighted Reilly formula \cite{Reilly77,MaDu10,KolesnikovMilman17}.
The unweighted case $U\equiv0$ is the classical Reilly formula \cite{Reilly77}; the extension to the drifted operator $\Lo=\Delta-\nabla U\cdot\nabla$ is due to \cite{MaDu10}, and a systematic treatment on weighted manifolds with boundary is given in \cite{KolesnikovMilman17}.

\begin{lemma}[Weighted Reilly formula; second-order estimate]\label{lem:reilly}
Assume $\partial\Omega\in C^{2,1}$ (or $C^3$) and $U\in C^2(\overline\Omega)$. For every $h\in D(\Lo)$ as in \eqref{eq:neumann-domain},
\begin{equation}\label{eq:reilly}
\int_\Omega(\Lo h)^2\,\rmd\mux
=\int_\Omega\|\nabla^2_xh\|_F^2\,\rmd\mux
+\int_\Omega\langle\nabla^2U\,\nabla_xh,\nabla_xh\rangle\,\rmd\mux
+\frac1{Z_x}\int_{\partial\Omega}\II(\nabla_xh,\nabla_xh)\,e^{-U}\rmd\sigma,
\end{equation}
where $\|\cdot\|_F$ is the Frobenius norm, $\II$ is the second fundamental form of $\partial\Omega$ with respect to the outward normal and $Z_x=\int_\Omega e^{-U}\rmd x$. In particular, since $\II\succeq0$ by convexity and $\nabla^2U\succeq-K\,\mathrm{Id}$,
\begin{equation}\label{eq:hessian-bound}
\nrm[\mux]{\nabla^2_xh}^2\le\nrm[\mux]{\Lo h}^2+K\,\nrm[\mux]{\nabla_xh}^2 .
\end{equation}
\end{lemma}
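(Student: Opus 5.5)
I will prove \eqref{eq:reilly} first for smooth Neumann functions $h\in C^3(\overline\Omega)$ with $\partial_nh=0$, and then extend it to all of $D(\Lo)$ by density. The tool is the weighted Bochner identity
\[
\tfrac12\Lo|\nabla h|^2=\|\nabla^2h\|_F^2+\langle\nabla h,\nabla\Lo h\rangle+\langle\nabla^2U\nabla h,\nabla h\rangle ,
\]
valid pointwise for $h\in C^3$. This identity is a direct computation: commute $\partial_i$ past $\Delta-\nabla U\cdot\nabla$, which produces the term $\partial_{ij}U\,\partial_jh$.

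\emph{Integrating the Bochner identity.} I integrate both sides against $\rmd\mux$ and apply the weighted Green formula \eqref{eq:green} twice. With $a=1$ and $b=\tfrac12|\nabla h|^2$, the left side becomes $Z_x^{-1}\int_{\partial\Omega}\tfrac12\partial_n|\nabla h|^2e^{-U}\rmd\sigma$. With $a=\Lo h$ and $b=h$, the cross term becomes
\[
\int\langle\nabla h,\nabla\Lo h\rangle\rmd\mux=-\int(\Lo h)^2\rmd\mux ,
\]
with no boundary term, since $\partial_nh=0$. Rearranging gives
\[
\int(\Lo h)^2\rmd\mux=\int\|\nabla^2h\|_F^2\rmd\mux+\int\langle\nabla^2U\nabla h,\nabla h\rangle\rmd\mux-Z_x^{-1}\int_{\partial\Omega}\tfrac12\partial_n|\nabla h|^2e^{-U}\rmd\sigma .
\]

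\emph{The boundary computation.} It remains to show $\tfrac12\partial_n|\nabla h|^2=\langle\nabla^2h\,n,\nabla h\rangle=-\II(\nabla h,\nabla h)$ on $\partial\Omega$ when $\partial_nh=0$. Since $\nabla h$ is tangential there, I differentiate the identity $\langle\nabla h,n\rangle=0$ along the tangent direction $\tau=\nabla h$. This gives $\langle\nabla^2h\,\tau,n\rangle+\langle\nabla h,D_\tau n\rangle=0$. Here $\langle\nabla h,D_\tau n\rangle=\II(\tau,\tau)$ with the outward-normal convention, for which convex domains have $\II\succeq0$. Using symmetry of $\nabla^2h$, the boundary term equals $+Z_x^{-1}\int_{\partial\Omega}\II(\nabla h,\nabla h)e^{-U}\rmd\sigma$, which is \eqref{eq:reilly} for smooth Neumann $h$. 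The $C^{2,1}$ regularity of the boundary makes $D_\tau n$ Lipschitz, so this differentiation is legitimate.

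\emph{Extension to $D(\Lo)$.} This is the step I expect to be the main obstacle. A general $h\in D(\Lo)$ is only in $H^2$, so the third derivatives in the Bochner identity and the trace of $\nabla^2h$ are not a priori meaningful. My plan is to approximate $h$ inside the Neumann class. First, regularize the datum: take $F=(1-\Lo)h\in L^2$, choose $F_k\in C^\infty(\overline\Omega)$ with $F_k\to F$ in $L^2$, and set $h_k=(1-\Lo)^{-1}F_k$. Second, by elliptic regularity for the Neumann problem on a $C^{2,1}$ domain (Remark~\ref{rem:C21}, \cite[Thm.~2.5.1.1]{Grisvard85}), $h_k\in H^3(\Omega)$ with $\partial_nh_k=0$. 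For $H^3$ functions the argument above still works, because $\nabla^2h_k$ has an $L^2(\partial\Omega)$ trace and Lemma~\ref{lem:green} applies to $\tfrac12|\nabla h_k|^2\in W^{2,1}$; alternatively, first approximate $h_k$ by $C^3$ functions in $H^3$. Third, by \eqref{eq:apriori-H2}, $h_k\to h$ in $H^2$, so $\Lo h_k\to\Lo h$ in $L^2$ and the interior terms converge. The boundary term converges because the traces satisfy $\nabla h_k\to\nabla h$ in $L^2(\partial\Omega)$ and $\II$ is bounded. Passing to the limit proves \eqref{eq:reilly} on all of $D(\Lo)$.

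\emph{The second-order estimate.} Finally, \eqref{eq:hessian-bound} follows from \eqref{eq:reilly} directly: drop the boundary term using $\II\succeq0$, and bound the potential term below using $\langle\nabla^2U\nabla h,\nabla h\rangle\ge-K|\nabla h|^2$.
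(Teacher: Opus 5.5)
Your proposal is correct and follows the paper's proof essentially step for step. Both use the pointwise weighted Bochner identity, integrated via the weighted Green formula, with the Neumann condition killing the cross-term boundary flux. Both identify the boundary term by differentiating $\langle\nabla h,n\rangle=0$ tangentially along $\tau=\nabla h$, which gives $\tfrac12\partial_n|\nabla h|^2=-\II(\nabla h,\nabla h)$. Both then pass to general $h$ by density through $h_k=(1-\Lo)^{-1}F_k$, using $H^3$ Neumann regularity on the $C^{2,1}$ domain and $H^2$ convergence from \eqref{eq:apriori-H2}.

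One small caution about your two options in the extension step. Prefer the first one: work directly with the $H^3$ functions $h_k$, which satisfy $\partial_nh_k=0$ exactly, as the paper stresses. A further generic $C^3$ approximation $\tilde h$ in $H^3$ would only give $\partial_n\tilde h\approx0$. The boundary identity would then carry extra terms such as $\nabla_T\tilde h\cdot\nabla_T\partial_n\tilde h+\partial_n\tilde h\,\partial_{nn}\tilde h$, and you would have to show separately that these vanish in the limit.
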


\begin{proof}
Assume first $h\in H^3(\Omega)$ with $\partial_nh=0$ on $\partial\Omega$; the general case $h\in D(\Lo)$ will be treated later in \emph{Step~5}. We divide the proof into several steps.

\emph{Step 1: the weighted Bochner ($\Gamma_2$) identity.} For $\Lo=\Delta-\nabla U\cdot\nabla$ on $\R^d$ (zero Ricci curvature), the Bakry--\'Emery $\Gamma_2$ calculus \cite[Ch.~1, Ch.~3]{BGL14} gives, pointwise in $\Omega$,
\begin{equation}\label{eq:bochner}
\frac12\,\Lo|\nabla h|^2
=\|\nabla^2h\|_F^2+\nabla h\cdot\nabla(\Lo h)+\langle\nabla^2U\,\nabla h,\nabla h\rangle .
\end{equation}
To verify \eqref{eq:bochner}, notice that the classical Bochner formula in $\R^d$,
\begin{align}\label{add:three:1}
\frac12\Delta|\nabla h|^2=\|\nabla^2h\|_F^2+\nabla h\cdot\nabla\Delta h,
\end{align}
and moreover,
\begin{align}\label{add:three:2}
-\frac12\nabla U\cdot\nabla|\nabla h|^2=-\nabla^2h(\nabla h,\nabla U),
\end{align}
while
\begin{align}\label{add:three:3}
\nabla h\cdot\nabla(\nabla U\cdot\nabla h)=\nabla^2h(\nabla h,\nabla U)+\langle\nabla^2U\nabla h,\nabla h\rangle.
\end{align}
By adding \eqref{add:three:1}, \eqref{add:three:2} and \eqref{add:three:3}, we obtain \eqref{eq:bochner}.

\emph{Step 2: integrating the left-hand side.} For any $g\in C^2(\overline\Omega)$, writing $\Lo g\,e^{-U}=\nabla\cdot(e^{-U}\nabla g)$ and applying the divergence theorem with $n$ outward,
\begin{equation}\label{eq:Lo-integral}
\int_\Omega\Lo g\,\rmd\mux=\frac1{Z_x}\int_\Omega\nabla\cdot\left(e^{-U}\nabla g\right)\rmd x=\frac1{Z_x}\int_{\partial\Omega}\partial_ng\,e^{-U}\rmd\sigma .
\end{equation}
Applying \eqref{eq:Lo-integral} with $g=|\nabla h|^2$ gives
\begin{equation}\label{eq:LHS}
\int_\Omega\frac12\Lo|\nabla h|^2\,\rmd\mux=\frac1{2Z_x}\int_{\partial\Omega}\partial_n|\nabla h|^2\,e^{-U}\rmd\sigma .
\end{equation}

\emph{Step 3: integrating the cross term.} By the weighted Green formula \eqref{eq:green} (Lemma~\ref{lem:green}) with $a=\Lo h\in H^1(\Omega)$ and $b=h$, using $\partial_nh=0$ on $\partial\Omega$,
\begin{equation}\label{eq:cross}
\int_\Omega\nabla h\cdot\nabla(\Lo h)\,\rmd\mux
=-\int_\Omega(\Lo h)^2\,\rmd\mux
+\frac1{Z_x}\int_{\partial\Omega}(\Lo h)\underbrace{\partial_nh}_{=0}\,e^{-U}\rmd\sigma
=-\int_\Omega(\Lo h)^2\,\rmd\mux .
\end{equation}

\emph{Step 4: identifying the boundary term.} It remains to compute $\frac12\partial_n|\nabla h|^2$ on $\partial\Omega$. Since $\nabla|\nabla h|^2=2\nabla^2h\,\nabla h$,
\begin{equation}\label{eq:bdry1}
\frac12\partial_n|\nabla h|^2=\big\langle\nabla^2h\,\nabla h,\ n\big\rangle=\nabla^2h(\nabla h,n).
\end{equation}
Now use $\partial_nh=\langle\nabla h,n\rangle\equiv0$ \emph{on $\partial\Omega$}. This is an identity between functions on the hypersurface $\partial\Omega$, so it may be differentiated in any direction $\tau$ tangent to $\partial\Omega$; writing $D_\tau$ for the tangential derivative and using that $n\in C^1(\partial\Omega)$ (here $\partial\Omega\in C^2$ is used),
\[
0=D_\tau\big\langle\nabla h,n\big\rangle=\big\langle\nabla^2h\,\tau,\ n\big\rangle+\big\langle\nabla h,\ D_\tau n\big\rangle
=\nabla^2h(\tau,n)+\II(\tau,\nabla h),
\]
where $\II(\tau,\tau'):=\langle D_\tau n,\tau'\rangle$ is the second fundamental form of $\partial\Omega$ with respect to the \emph{outward} normal, $D_\tau n$ being the shape (Weingarten) operator. Since $\partial_nh=0$, the vector $\nabla h$ is itself tangent to $\partial\Omega$, so we may take $\tau=\nabla h$ and obtain
\begin{equation}\label{eq:bdry2}
\nabla^2h(\nabla h,n)=-\,\II(\nabla h,\nabla h)\qquad\text{on }\partial\Omega.
\end{equation}
Combining \eqref{eq:bdry1} and \eqref{eq:bdry2},
\begin{equation}\label{eq:bdry3}
\frac12\partial_n|\nabla h|^2=-\,\II(\nabla h,\nabla h)\qquad\text{on }\partial\Omega.
\end{equation}
Now integrate \eqref{eq:bochner} against $\mux$ and substitute \eqref{eq:LHS}, \eqref{eq:cross} and \eqref{eq:bdry3}:
\[
-\frac1{Z_x}\int_{\partial\Omega}\II(\nabla h,\nabla h)e^{-U}\rmd\sigma
=\int_\Omega\|\nabla^2h\|_F^2\rmd\mux-\int_\Omega(\Lo h)^2\rmd\mux+\int_\Omega\langle\nabla^2U\nabla h,\nabla h\rangle\rmd\mux,
\]
and rearranging gives \eqref{eq:reilly}.

\emph{Step 5: from $C^3$ to $D(\Lo)$.} Let $h\in D(\Lo)$ and set $g:=(1-\Lo)h\in L^2(\mux)$, so that $h=(1-\Lo)^{-1}g$. Choose $g_k\in C^\infty(\overline\Omega)$ with $g_k\to g$ in $L^2(\mux)$ (possible since $C^\infty(\overline\Omega)$ is dense in $L^2$), and set
\[
h_k:=(1-\Lo)^{-1}g_k\in D(\Lo).
\]
Then:
\begin{enumerate}
\item[(1)] \emph{$h_k$ is sufficiently smooth.} With $\partial\Omega\in C^{2,1}$, the Neumann problem $\Delta w=F\in H^1(\Omega)$, $\partial_nw=0$, has $w\in H^3(\Omega)$ by the standard $H^{k+2}$ regularity theory for $C^{k+1,1}$ boundaries with $k=1$; and $F:=h_k-g_k+\nabla U\cdot\nabla h_k$ is indeed in $H^1(\Omega)$, because $\nabla U\in C^1(\overline\Omega)\subset W^{1,\infty}$ multiplies $\nabla h_k\in H^1$ into $H^1$.
\item[(2)] \emph{Convergence.} $(1-\Lo)^{-1}$ is a bounded operator on $L^2(\mux)$ with the norm less than or equal to $1$, so $h_k\to h$ in $L^2(\mux)$ and $\Lo h_k=h_k-g_k\to h-g=\Lo h$ in $L^2(\mux)$. Consequently $h_k\to h$ in the graph norm of $\Lo$, and since the graph norm dominates the $H^2(\Omega)$ norm on $D(\Lo)$ (see \eqref{eq:apriori-H2} from Lemma~\ref{lem:neumann}) we get $h_k\to h$ in $H^2(\Omega)$.
\item[(3)] \emph{Stability of each term in \eqref{eq:reilly}.} The three interior integrals are continuous with respect to $H^2(\Omega)$ convergence: $\int_{\Omega}(\Lo h)^2\rmd\mux$ and $\int_{\Omega}\|\nabla^2h\|_F^2\rmd\mux$ are quadratic in second derivatives, and $\int_{\Omega}\langle\nabla^2U\nabla h,\nabla h\rangle\rmd\mux$ is quadratic in first derivatives with the bounded coefficient $\nabla^2U\in C^0(\overline\Omega)$. The boundary integral is continuous too: by the trace theorem on the Lipschitz domain $\Omega$,
\begin{align*}
&\Big|\int_{\partial\Omega}\II(\nabla h_k,\nabla h_k)e^{-U}\rmd\sigma-\int_{\partial\Omega}\II(\nabla h,\nabla h)e^{-U}\rmd\sigma\Big|
\\
&\le C\|\II\|_{L^\infty}\|\nabla h_k-\nabla h\|_{L^2(\partial\Omega)}\left(\|\nabla h_k\|_{L^2(\partial\Omega)}+\|\nabla h\|_{L^2(\partial\Omega)}\right),
\end{align*}
and
\[
\|\nabla h_k-\nabla h\|_{L^2(\partial\Omega)}\le C_{\mathrm{tr}}\|\nabla h_k-\nabla h\|_{H^1(\Omega)}\le C_{\mathrm{tr}}\|h_k-h\|_{H^2(\Omega)}\to0,
\]
where $\|\II\|_{L^\infty}<\infty$ because $\partial\Omega\in C^2$ is compact.
\item[(4)] \emph{The Neumann trace is preserved along the approximation.} That is, every $h_k$ lies in $D(\Lo)$, hence satisfies $\partial_nh_k=0$ exactly --- not approximately --- so the term $\int_{\partial\Omega}(\Lo h_k)\partial_nh_k\,e^{-U}\rmd\sigma$ in \eqref{eq:cross} vanishes for every $k$, and \eqref{eq:bdry3} holds for every $k$.
\end{enumerate}
Passing to the limit in \eqref{eq:reilly} written for $h_k$ gives \eqref{eq:reilly} for $h$.

\emph{Step 6: the inequality \eqref{eq:hessian-bound}.} Rearranging \eqref{eq:reilly},
\begin{align}\label{last:middle}
\int_\Omega\|\nabla^2h\|_F^2\rmd\mux
=\int_\Omega(\Lo h)^2\rmd\mux
-\int_\Omega\langle\nabla^2U\nabla h,\nabla h\rangle\rmd\mux
-\frac1{Z_x}\int_{\partial\Omega}\II(\nabla h,\nabla h)e^{-U}\rmd\sigma .
\end{align}
The last term in \eqref{last:middle} satisfies
\[
-\frac1{Z_x}\int_{\partial\Omega}\II(\nabla h,\nabla h)e^{-U}\rmd\sigma\leq 0,
\]
because $\Omega$ is convex, hence $\II\succeq0$ pointwise on $\partial\Omega$ (with the outward normal convention: for the ball of radius $R$, $D_\tau n=\tau/R$ and $\II(\tau,\tau)=|\tau|^2/R>0$). The middle term in \eqref{last:middle} satisfies
\[
-\int_\Omega\langle\nabla^2U\nabla h,\nabla h\rangle\rmd\mux\le K\int_{\Omega}|\nabla h|^2\rmd\mux,
\]
because $\nabla^2U\succeq-K\,\mathrm{Id}$. This gives \eqref{eq:hessian-bound}.
\end{proof}

We can now state the corrector bounds; they are identical to \cite[Lemma~1]{FLL26}, and we indicate in the proof exactly where the boundary enters.
\begin{lemma}[Bounds on the corrector]\label{lem:corrector}
For any $\phi\in\mathscr C_0$,
\begin{align}
\nrm{\Am\phi}&\le\frac1{2\sqrt m}\,\nrm{\phi},\label{eq:Am-bound}\\
\nrm{\La\Am\phi}&\le\nrm{\phi},\label{eq:LaAm-bound}\\
\nrm{\Am\La(1-\Pv)\phi}&\le\sqrt{2+\frac{K}{2m}}\;\nrm{\phi}.\label{eq:AmLa-bound}
\end{align}
In particular, $\Am$, $\La\Am$ and $\Am\La(1-\Pv)$ extend uniquely to bounded operators on $\Lmuz$ with the same bounds, and if $\varepsilon<\sqrt m$ then
\begin{equation}\label{eq:equivalence}
\frac{1-\varepsilon/\sqrt m}{2}\,\nrm{\phi}^2\le\mathsf L_m(\phi)\le\frac{1+\varepsilon/\sqrt m}{2}\,\nrm{\phi}^2
\qquad\text{for any $\phi\in\Lmuz$}.
\end{equation}
\end{lemma}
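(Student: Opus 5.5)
The plan is to reduce all three bounds to spectral/elliptic statements for the Neumann operator $\Lo$ on $L^2(\mux)$, following \cite[Lemma~1]{FLL26}, and to check at each integration by parts that the boundary term vanishes by Lemma~\ref{lem:antisym-sob} and the Neumann condition.

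\emph{First two bounds.} Fix $\phi\in\mathscr C_0$ and set $u:=\Am\phi=(m-\Lo)^{-1}(\La\Pv)^*\phi$, a function of $x$ in $D(\Lo)$ with $\partial_nu=0$. Pair the resolvent equation $(m-\Lo)u=(\La\Pv)^*\phi$ with $u$. By the form identity \eqref{eq:form-identity} this gives
\[
m\nrm{u}^2+\nrm{\La u}^2=\ip{\phi}{\La\Pv u}=\ip{\phi}{\La u}.
\]
Here the right-hand side is justified because $\La u=v\cdot\nabla_xu$ has $G=\nabla_xu\in H^1$ with $n\cdot G=0$ on $\partial\Omega$; this is exactly the situation covered by Lemma~\ref{lem:antisym-sob} and Remark~\ref{rem:neumann-specular}, which make \eqref{eq:adjoint-LaPv} valid against $u$. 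Cauchy--Schwarz bounds the right-hand side by $\nrm{\phi}\nrm{\La u}$. Dropping $m\nrm u^2$ gives \eqref{eq:LaAm-bound}. Using instead $\nrm\phi\nrm{\La u}\le\nrm{\La u}^2+\frac14\nrm\phi^2$ gives $m\nrm u^2\le\frac14\nrm\phi^2$, which is \eqref{eq:Am-bound}.

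\emph{Third bound.} This is the step where the domain enters. Use duality: write $\psi=(1-\Pv)\phi$, and for a test function $g\in L^2(\mux)$ (which we may take in $\Ran\Pv$, since $\Am$ maps into it) set $w:=(m-\Lo)^{-1}g\in D(\Lo)$. Then
\[
\ip{\Am\La\psi}{g}=-\ip{\Pv\La\La\psi}{w}.
\]
We unwind this by two applications of antisymmetry, moving $\La$ first onto $w$ and then onto $\La w=v\cdot\nabla w$; both steps are legitimate by Lemma~\ref{lem:antisym-sob}, since $\partial_nw=0$. The result is $\ip{\psi}{\La^2w}$, with
\[
\La^2w=v^{\top}\nabla^2w\,v-\nabla U\cdot\nabla w.
\]
Only the part of $\La^2 w$ orthogonal to $\Ran\Pv$ survives against $\psi$, namely $v^\top\nabla^2w\,v-\Delta w$. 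Its $L^2(\mu)$ norm equals $\sqrt2\,\nrm[\mux]{\nabla^2w}$, by the Gaussian fourth-moment computation for a symmetric matrix.

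The Hessian is then controlled by the Reilly estimate \eqref{eq:hessian-bound} of Lemma~\ref{lem:reilly}, which is where convexity is used:
\[
\nrm{\nabla^2w}^2\le\nrm{\Lo w}^2+K\nrm{\nabla w}^2.
\]
By the spectral theorem for $-\Lo$ (with spectrum in $\{0\}\cup[m,\infty)$), $\nrm{\Lo w}\le\nrm g$ and $\nrm{\nabla w}^2=\langle -\Lo w,w\rangle\le\frac{1}{4m}\nrm g^2$. Taking the supremum over $g$ yields
\[
\nrm{\Am\La(1-\Pv)\phi}\le\sqrt{2\left(1+\tfrac{K}{4m}\right)}\,\nrm\phi=\sqrt{2+\tfrac{K}{2m}}\,\nrm\phi.
\]
The main obstacle I expect is regularity: $w$ is only in $H^2$, so $\La w$ is not $C^2$ and the second integration by parts needs care. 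I would handle this by first taking $g$ smooth, so that $w\in H^3$ as in Step~5 of Lemma~\ref{lem:reilly}, and then arguing by density.

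\emph{Extension and equivalence.} Bounded extension follows from density of $\mathscr C_0$ in $\Lmuz$. For \eqref{eq:equivalence}, note that $|\ip{\Am\phi}{\phi}|\le\frac{1}{2\sqrt m}\nrm\phi^2$ by \eqref{eq:Am-bound}, and insert this into the definition of $\mathsf L_m$.
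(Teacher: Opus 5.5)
\textbf{Verdict: there is a genuine gap in the third bound.}

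Your arguments for \eqref{eq:Am-bound}, \eqref{eq:LaAm-bound} and \eqref{eq:equivalence} are correct and essentially the paper's. Your variant $\nrm{\phi}\,\nrm{\La u}\le\nrm{\La u}^2+\frac14\nrm{\phi}^2$ even avoids the case $\nrm{\La u}=0$. One small misattribution: the pairing there is $(\phi,u)$ with $u$ independent of $v$, so it is the specularity of $\phi$, not the Neumann condition, that kills the flux.

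\textbf{Where the gap is.} It sits at the first of your two integrations by parts. Moving the outer $\La$ of $\La\La\psi$ onto $w$ is antisymmetry for the pair $(\La\psi,w)$, and Lemma~\ref{lem:antisym-sob} does not cover it. That lemma requires the first entry to lie in $\mathscr C$, and $\La\psi$ is in general neither $C^2$ nor specular. The leftover flux is $\frac1Z\int_{\partial\Omega}w(x)\int_{\R^d}\La\psi(x,v)\,(v\cdot n(x))\,e^{-H(x,v)}\,\rmd v\,\rmd\sigma(x)$. The condition $\partial_nw=0$ is irrelevant to this term, since $w$ enters undifferentiated, with $G=0$. The flux does not vanish: for $d=1$, $\Omega=(0,L)$, $U\equiv0$ and $\psi=a(x)(v^2-1)\in\mathscr C_0$, one finds $\Pv\La^2\psi=2a''$ and the flux equals $\frac2L[a'w]_0^L$.

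\textbf{Why it matters.} This is not a technicality. Take $a(x)=\sin(Nx)$ with $NL\in2\pi\mathbb N$, and $g=\cos(\pi x/L)$. Then $-\ip{\Pv\La\La\psi}{w}=\frac{2N}{mL}+O(1)$ while $\nrm{\psi}=1$. So the literal operator $-(m-\Lo)^{-1}\Pv\La^2(1-\Pv)$ you start from is unbounded and violates \eqref{eq:AmLa-bound}. Your chain reaches the right final expression only by cancellation of two errors. The starting identity $\ip{\Am\La\psi}{g}=-\ip{\Pv\La\La\psi}{w}$ is false for the bounded $\Am$, and the invalid integration by parts is off by the same boundary term.

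\textbf{The fix (the paper's route).} Never apply the differential formula for $\Am$ to a non-specular input; let $\Am$ act on such inputs only through duality.
\begin{itemize}
\item The operator $T:=\La\Pv(m-\Lo)^{-1}$ is bounded.
\item For $F\in\mathscr C_0$, the identity $\Am F=T^*F$ is Lemma~\ref{lem:antisym-sob} applied to the pair $(F,w)$, where the specularity of $F$ removes the flux. Hence $T^*$ is the bounded extension of $\Am$, which is the operator acting on $\Lgen f$ in Section~\ref{sec:proof}.
\item Duality alone then gives $\ip{\Am\La\psi}{g}=\ip{\La\psi}{\La w}$.
\item The only integration by parts left is on the pair $(\psi,\La w)$. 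Here $\psi\in\mathscr C$ and $\La w=v\cdot G$ with $G=\nabla_xw\in H^1$ and $n\cdot G=\partial_nw=0$. Lemma~\ref{lem:antisym-sob} gives $\ip{\Am\La\psi}{g}=-\ip{\psi}{(1-\Pv)\La^2w}$; note the minus sign you dropped.
\end{itemize}
From there your fourth-moment, Reilly and spectral steps coincide with the paper's.

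The obstacle you anticipated is not the real one. No $H^3$ regularization is needed, because Lemma~\ref{lem:antisym-sob} already handles $G\in H^1$. The point is to use the Neumann condition on the one pair where it actually makes the boundary flux vanish.
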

\begin{proof}
Set $u=\Am\phi$ and $w=(\La\Pv)^*\phi=-\Pv\La\phi$, so that $u=(m-\Lo)^{-1}w$, and both are functions of $x$ only.

\emph{Step 0: $w$ and $u$ have mean zero.} By Lemma~\ref{lem:antisym} applied to the pair $(\phi,1)$ --- both are in $\mathscr C$, the constant function $1$ being trivially specular and satisfying \eqref{eq:poly-growth} --- and using $\La1=0$,
\[
\int_\Omega w\,\rmd\mux=-\int_\Omega \Pv\La\phi\,\rmd\mux=-\int_{\Omega\times\R^d}\La\phi\,\rmd\mu=-\ip{\La\phi}{1}=\ip{\phi}{\La1}=0 ,
\]
where the second equality uses the fact that $\int_\Omega(\Pv F)\rmd\mux=\int_{\Omega\times\mathbb{R}^{d}} F\rmd\mu$ for any $F$, by the product structure $\mu=\mux\otimes\kappa$. Hence $w\in L^2_0(\mux)$, and since $(m-\Lo)^{-1}$ maps $L^2_0(\mux)$ into itself (it commutes with the spectral projection onto $\mathrm{span}\{1\}^\perp$), also $u\in L^2_0(\mux)$. Moreover $u\in D(\Lo)$, so $\partial_nu=0$ and $\La u=v\cdot\nabla_xu$ is specular by Remark~\ref{rem:neumann-specular}.

\emph{Step 1: proof of \eqref{eq:Am-bound}--\eqref{eq:LaAm-bound}.} Applying \eqref{eq:form-identity} with $g=h=u$ gives
\[
\nrm{\La u}^2=\mathcal E(u,u)=\ip[\mux]{u}{-\Lo u}.
\]
Therefore, using $w=(m-\Lo)u$,
\begin{equation}\label{eq:key-identity}
\begin{aligned}
m\nrm[\mux]{u}^2+\nrm{\La u}^2
&=\ip[\mux]{mu-\Lo u}{u}
\\
&=\ip[\mux]{w}{u}
=\ip{\phi}{\La u}
\le\nrm{\phi}\,\nrm{\La u},
\end{aligned}
\end{equation}
where the third equality is
\[
\ip[\mux]{w}{u}=\ip{(\La\Pv)^*\phi}{u}=\ip{\phi}{\La\Pv u}=\ip{\phi}{\La u},
\]
by the definition of the adjoint together with $\Pv u=u$ ($u$ being $x$-only); this is where Lemma~\ref{lem:antisym-sob} is used, through \eqref{eq:adjoint-LaPv}, and there is no boundary term because both $\phi$ and $\La u$ are specular.\footnote{Lemma~\ref{lem:antisym-sob} is applied here with $f=\phi$ and $g=u$, i.e.\ $g_0=u\in D(\Lo)$ and $G=0$. Then \eqref{eq:sob-specular} is vacuous --- a $v$-independent function is specular automatically --- and only $u\in H^1(\Omega)$ is needed. This covers the use of \eqref{eq:adjoint-LaPv} in Step~1 (the identity $\ip[\mux]{w}{u}=\ip{\phi}{\La u}$).} Dropping the nonnegative term $m\nrm[\mux]{u}^2$ from the left of \eqref{eq:key-identity} and canceling one factor $\nrm{\La u}$ gives \eqref{eq:LaAm-bound}. For \eqref{eq:Am-bound}, apply the elementary inequality $a^2+b^2\ge2ab$ with $a=\sqrt m\,\nrm[\mux]{u}$ and $b=\nrm{\La u}$ to the left-hand side of \eqref{eq:key-identity}:
\[
2\sqrt m\,\nrm[\mux]{u}\,\nrm{\La u}\le m\nrm[\mux]{u}^2+\nrm{\La u}^2\le\nrm{\phi}\,\nrm{\La u}.
\]
If $\nrm{\La u}>0$ we may cancel it and obtain $\nrm[\mux]{u}\le\nrm{\phi}/(2\sqrt m)$. If $\nrm{\La u}=0$, then by \eqref{eq:form-identity} $\nabla_xu=0$, so $u$ is constant on the connected set $\Omega$, and being mean zero, $u=0$; the bound is then trivial. Finally $\nrm{u}=\nrm[\mux]{u}$ because $u$ does not depend on $v$ and $\kappa$ is a probability measure. This is \eqref{eq:Am-bound}.

\emph{Step 2: proof of \eqref{eq:AmLa-bound}; the adjoint $B^*$.} Write $B:=\Am\La(1-\Pv)$, defined on $\mathscr C_0$. By \eqref{eq:Am-a} and Step~0, the range of $B$ is contained in $\Ran\Pv\cap\Lmuz$, so for $\phi\in\mathscr C_0$,
\[
\nrm{B\phi}=\sup\big\{\ip{B\phi}{g}\ :\ g\in\Ran\Pv\cap\Lmuz,\ \nrm{g}=1\big\},
\]
and for every such $g$ we justify below the duality identity $\ip{B\phi}{g}=\ip{\phi}{B^*g}$ with an explicitly computed $B^*g$; consequently,\footnote{We do not presume $B$ bounded --- the bound \eqref{eq:normB} is what proves it.}
\begin{equation}\label{eq:normB}
\nrm{B\phi}\le\left(\sup\big\{\nrm{B^*g}\ :\ g\in\Ran\Pv\cap\Lmuz,\ \nrm{g}=1\big\}\right)\,\nrm{\phi}.
\end{equation}
We now compute $B^*$. Using $(ST)^*=T^*S^*$, the self-adjointness of $\Pv$ and of $(m-\Lo)^{-1}$, the antisymmetry $\La^*=-\La$ from Lemma~\ref{lem:antisym-sob} (Indeed, the step $\La^*=-\La$ inside \eqref{eq:Bstar} is used against the pair $\left((1-\Pv)\psi,\ \La h\right)$ with $\psi\in\mathscr C_0$ and $h=(m-\Lo)^{-1}g\in D(\Lo)$. Now $(1-\Pv)\psi\in\mathscr C$, and $\La h=v\cdot\nabla_xh$ is of the form $g_0+v\cdot G$ with $g_0=0$ and $G=\nabla_xh\in H^1(\Omega;\R^d)$ satisfying $n\cdot G=\partial_nh=0$ $\sigma$-a.e.; so Lemma~\ref{lem:antisym-sob} applies and gives \eqref{eq:antisym-sob} with no boundary term.), and \eqref{eq:adjoint-LaPv},
\begin{equation}\label{eq:Bstar}
\begin{aligned}
B^*&=\left(\Am\La(1-\Pv)\right)^*
=(1-\Pv)^*\,\La^*\,\Am^*
=-(1-\Pv)\,\La\,\Am^*,
\\
\Am^*&=\left((m-\Lo)^{-1}(\La\Pv)^*\right)^*=(\La\Pv)\,\left((m-\Lo)^{-1}\right)^*=\La\Pv\,(m-\Lo)^{-1},
\end{aligned}
\end{equation}
and hence
\begin{equation}\label{eq:Bstar2}
B^*=-(1-\Pv)\,\La\,\La\,\Pv\,(m-\Lo)^{-1}=-(1-\Pv)\La^2\Pv(m-\Lo)^{-1}.
\end{equation}

\emph{Step 3: computing $B^*g$ explicitly.} Fix $g\in\Ran\Pv\cap\Lmuz$ with $\nrm{g}=1$, and set
\begin{equation}\label{eq:h-def}
h:=(m-\Lo)^{-1}g\in D(\Lo)\cap L^2_0(\mux),
\end{equation}
a function of $x$ alone with $\partial_nh=0$. Then $\Pv h=h$ and, by \eqref{eq:LaPv}, $\La h=v\cdot\nabla_xh$. Applying $\La$ once more, and using $\nabla_v(v\cdot\nabla_xh)=\nabla_xh$,
\[
\La^2h=\La\left(v\cdot\nabla_xh\right)
=v\cdot\nabla_x\left(v\cdot\nabla_xh\right)-\nabla_xU\cdot\nabla_v\left(v\cdot\nabla_xh\right)
=\sum_{i,j}v_iv_j\,\partial_{ij}h-\nabla_xU\cdot\nabla_xh .
\]
Averaging in $v$ with $\int_{\mathbb{R}^{d}} v_iv_j\rmd\kappa=\delta_{ij}$,
\[
\Pv\La^2h=\sum_{i,j}\delta_{ij}\partial_{ij}h-\nabla_xU\cdot\nabla_xh=\Delta_xh-\nabla_xU\cdot\nabla_xh=\Lo h,
\]
and therefore,
\begin{equation}\label{eq:Bstar-explicit}
B^*g=-(1-\Pv)\La^2h=-\left(\sum_{i,j}v_iv_j\partial_{ij}h-\nabla_xU\cdot\nabla_xh\right)+\Lo h
=-\sum_{i,j}\left(v_iv_j-\delta_{ij}\right)\partial_{ij}h .
\end{equation}
Note that the first-order term $\nabla_xU\cdot\nabla_xh$, which is $v$-independent, is annihilated by $1-\Pv$; only the traceless part of the quadratic-in-$v$ term survives.

\emph{Step 4: the norm of $B^*g$.} By \eqref{eq:Bstar-explicit} and the product structure of $\mu$,
\[
\nrm{B^*g}^2=\int_\Omega\sum_{i,j,k,l}\partial_{ij}h\,\partial_{kl}h\left(\int_{\R^d}(v_iv_j-\delta_{ij})(v_kv_l-\delta_{kl})\rmd\kappa(v)\right)\rmd\mux(x).
\]
By the Gaussian fourth-moment and second-moment identities $\int_{\mathbb{R}^{d}} v_iv_jv_kv_l\rmd\kappa=\delta_{ij}\delta_{kl}+\delta_{ik}\delta_{jl}+\delta_{il}\delta_{jk}$ and $\int_{\mathbb{R}^{d}} v_iv_j\rmd\kappa=\delta_{ij}$,
\[
\int_{\R^d}(v_iv_j-\delta_{ij})(v_kv_l-\delta_{kl})\rmd\kappa
=\left(\delta_{ij}\delta_{kl}+\delta_{ik}\delta_{jl}+\delta_{il}\delta_{jk}\right)-\delta_{ij}\delta_{kl}-\delta_{kl}\delta_{ij}+\delta_{ij}\delta_{kl}
=\delta_{ik}\delta_{jl}+\delta_{il}\delta_{jk}.
\]
Contracting,
\[
\sum_{i,j,k,l}\left(\delta_{ik}\delta_{jl}+\delta_{il}\delta_{jk}\right)\partial_{ij}h\,\partial_{kl}h
=\sum_{i,j}(\partial_{ij}h)^2+\sum_{i,j}\partial_{ij}h\,\partial_{ji}h=2\|\nabla^2_xh\|_F^2,
\]
using the symmetry $\partial_{ij}h=\partial_{ji}h$. Hence,
\begin{equation}\label{eq:Bstar-norm}
\nrm{B^*g}^2=2\,\nrm[\mux]{\nabla^2_xh}^2 .
\end{equation}

\emph{Step 5: the Reilly estimate and spectral calculus.} By \eqref{eq:hessian-bound} and the identity $\nrm[\mux]{\nabla_xh}^2=\ip[\mux]{h}{-\Lo h}$ (which is \eqref{eq:green} with $a=b=h$ and $\partial_nh=0$),
\[
\nrm{B^*g}^2\le2\nrm[\mux]{\Lo h}^2+2K\,\ip[\mux]{h}{-\Lo h}.
\]
Let $(E_\lambda)$ be the spectral resolution of the nonnegative self-adjoint operator $-\Lo$ restricted to $L^2_0(\mux)$, so that $\Spec(-\Lo|_{L^2_0(\mux)})\subset[m,\infty)$ by \eqref{eq:gap}, and write $\rmd\nu_g(\lambda):=\rmd\ip[\mux]{E_\lambda g}{g}$, a positive measure on $[m,\infty)$ of total mass $\nrm[\mux]{g}^2$. Since $h=(m-\Lo)^{-1}g$ corresponds to the multiplier $(m+\lambda)^{-1}$,
\[
\nrm[\mux]{\Lo h}^2=\int_m^\infty\frac{\lambda^2}{(m+\lambda)^2}\rmd\nu_g(\lambda),
\qquad
\ip[\mux]{h}{-\Lo h}=\int_m^\infty\frac{\lambda}{(m+\lambda)^2}\rmd\nu_g(\lambda),
\]
so that
\[
\nrm{B^*g}^2\le\int_m^\infty\frac{2\lambda^2+2K\lambda}{(m+\lambda)^2}\rmd\nu_g(\lambda)
\le\left(\sup_{\lambda\ge m}\frac{2\lambda^2+2K\lambda}{(m+\lambda)^2}\right)\nrm[\mux]{g}^2 .
\]
Bounding the two terms of the numerator separately:
\[
\sup_{\lambda\ge m}\frac{\lambda^2}{(m+\lambda)^2}=\lim_{\lambda\to\infty}\frac{\lambda^2}{(m+\lambda)^2}=1,
\qquad
\sup_{\lambda\ge m}\frac{\lambda}{(m+\lambda)^2}=\frac{m}{(2m)^2}=\frac1{4m},
\]
where the second equation above is due to $\frac{\rmd}{\rmd\lambda}\frac{\lambda}{(m+\lambda)^2}=\frac{m-\lambda}{(m+\lambda)^3}\le0$ for $\lambda\ge m$, so the supremum is attained at $\lambda=m$. Therefore,
\[
\nrm{B^*g}^2\le\left(2+\frac{2K}{4m}\right)\nrm[\mux]{g}^2=\left(2+\frac{K}{2m}\right)\nrm[\mux]{g}^2 ,
\]
which with \eqref{eq:normB} proves \eqref{eq:AmLa-bound}.

\emph{Step 6: the norm equivalence \eqref{eq:equivalence}.} By Cauchy--Schwarz inequality and \eqref{eq:Am-bound},
\[
\big|\ip{\Am\phi}{\phi}\big|\le\nrm{\Am\phi}\,\nrm{\phi}\le\frac1{2\sqrt m}\nrm{\phi}^2,
\]
so from the definition \eqref{eq:Am-def} of $\mathsf L_m$,
\[
\frac12\nrm{\phi}^2-\frac{\varepsilon}{2\sqrt m}\nrm{\phi}^2\le\mathsf L_m(\phi)\le\frac12\nrm{\phi}^2+\frac{\varepsilon}{2\sqrt m}\nrm{\phi}^2,
\]
which is \eqref{eq:equivalence}. All bounds extend from $\mathscr C_0$ to $\Lmuz$ by density (Assumption~\ref{ass:WP}).
\end{proof}
\section{Proof of Theorem~\ref{thm:main}}\label{sec:proof}
Let $f(t)=e^{t\Lgen}f_0$ with $f_0\in\mathscr C_0$; the extension to general $f_0\in\Lmuz$ is performed at the end.

\emph{Step 1: differentiating the modified functional.} By Assumption~\ref{ass:WP} the map $t\mapsto f(t)$ is $C^1$ with values in $\Lmuz$ and takes values in $D(\Lgen)$, so we may differentiate \eqref{eq:Am-def} term by term. Using $\partial_tf=\Lgen f$ and the boundedness of $\Am$ (Lemma~\ref{lem:corrector}),
\begin{equation}\label{eq:dLm}
\frac{\rmd}{\rmd t}\mathsf L_m(f(t))
=\ip{f}{\Lgen f}-\varepsilon\left(\ip{\Am\Lgen f}{f}+\ip{\Am f}{\Lgen f}\right)
=-\mathscr D_\varepsilon(f(t)),
\end{equation}
with
\begin{equation}\label{eq:Deps}
\mathscr D_\varepsilon(f):=-\ip{\Lgen f}{f}
+\varepsilon\left(\ip{\Am\Lgen f}{f}+\ip{\Am f}{\Lgen f}\right).
\end{equation}
Fix $f\in\mathscr C_0$ and split it as
\[
f_S:=\Pv f,
\qquad
f_F:=(1-\Pv)f,
\]
the ``slow'' (macroscopic, $v$-independent) and ``fast'' (microscopic) components; note $f_S\in L^2_0(\mux)$ since $f\in\Lmuz$, and $\nrm{f}^2=\nrm{f_S}^2+\nrm{f_F}^2$ by orthogonality.

\emph{Step 2: the uncorrected dissipation.} By \eqref{eq:energy},
\begin{equation}\label{eq:uncorrected}
-\ip{\Lgen f}{f}=\gamma\nrm{\nabla_vf}^2 .
\end{equation}
This is where the antisymmetry of $\La$ (Lemma~\ref{lem:antisym}), hence the specular boundary condition, is used for the first time. Since $\nrm{\nabla_vf}$ vanishes on $\ker\Ls$, \eqref{eq:uncorrected} alone gives no decay, and the $\varepsilon$-correction in \eqref{eq:Deps} must supply coercivity on $f_S$. We split that correction according to $\Lgen=\La+\gamma\Ls$:
\begin{align}\label{eq:split}
\ip{\Am\Lgen f}{f}+\ip{\Am f}{\Lgen f}
&=\underbrace{\gamma\left(\ip{\Am\Ls f}{f}+\ip{\Am f}{\Ls f}\right)}_{\Ls\text{-contribution}}
\\
&\qquad+\underbrace{\ip{\Am\La f}{f}+\ip{\Am f}{\La f}}_{\La\text{-contribution}}.
\nonumber
\end{align}

\emph{Step 3: the $\Ls$ contribution.} First, since $\Am=\Pv\Am$ by \eqref{eq:Am-a} and $\Pv\Ls=0$ (because $\int_{\R^d}\Ls F\,\rmd\kappa=\ip[\kappa]{\Ls F}{1}=\ip[\kappa]{F}{\Ls1}=0$),
\begin{equation}\label{eq:Ls-first}
\ip{\Am f}{\Ls f}=\ip{\Pv\Am f}{\Ls f}=\ip{\Am f}{\Pv\Ls f}=0 .
\end{equation}
For the remaining term, note that $\Ls f=\Ls f_F$ (since $f_S$ is $v$-independent, $\Ls f_S=0$) and that $\Am\Ls f$ is a function of $x$ alone, so pairing it with $f$ is the same as pairing it with $\Pv f=f_S$:
\[
\ip{\Am\Ls f}{f}=\ip{\Am\Ls f_F}{\Pv f}=\ip{\Am\Ls f_F}{f_S}.
\]
Now move $\Am$ to the other side. By \eqref{eq:Bstar} we have $\Am^*=\La\Pv(m-\Lo)^{-1}$; setting
\begin{equation}\label{eq:phi-def}
\phi:=(m-\Lo)^{-1}f_S\in D(\Lo)\cap L^2_0(\mux),\qquad \partial_n\phi=0,
\end{equation}
we get
\[
\Am^*f_S=\La\Pv\phi=\La\phi=v\cdot\nabla_x\phi,
\]
which is specular by Remark~\ref{rem:neumann-specular}. Hence, using \eqref{eq:Ls-sym},\footnote{\eqref{eq:Ls-sym} is applied here with $g=\La\phi$, which is merely $H^1$ in $x$; this is legitimate because $\Ls$ acts in the $v$-variable only: the fixed-$x$ identity \eqref{by:integrating} holds for a.e.\ $x$ (with $f_F(x,\cdot)\in C^2$ of polynomial growth and $\La\phi(x,\cdot)$ affine in $v$), and the subsequent integration in $x$ against $\mux$ is justified by the Cauchy--Schwarz inequality.}
\begin{equation}\label{eq:Ls-chain}
\begin{aligned}
\ip{\Am\Ls f_F}{f_S}
&=\ip{\Ls f_F}{\Am^*f_S}
=\ip{\Ls f_F}{\La\phi}
\\
&=-\ip{\nabla_vf_F}{\nabla_v(\La\phi)}
=-\ip{\nabla_vf}{\nabla_x\phi},
\end{aligned}
\end{equation}
where in the last equality we used the two elementary identities
\[
\nabla_v(\La\phi)=\nabla_v\left(v\cdot\nabla_x\phi\right)=\nabla_x\phi,
\qquad
\nabla_vf_F=\nabla_v\left((1-\Pv)f\right)=\nabla_vf-\nabla_v(\Pv f)=\nabla_vf .
\]
It remains to bound $\nrm{\nabla_x\phi}$. Since $\phi$ is $x$-only, $\nrm{\nabla_x\phi}=\nrm[\mux]{\nabla_x\phi}$, and by \eqref{eq:green} with $a=b=\phi$ and $\partial_n\phi=0$,
\[
\nrm[\mux]{\nabla_x\phi}^2=\ip[\mux]{\phi}{-\Lo\phi}=\big\|(-\Lo)^{1/2}(m-\Lo)^{-1}f_S\big\|^2_{L^2(\mux)} .
\]
By the spectral theorem for $-\Lo$ on $L^2_0(\mux)$ and \eqref{eq:gap}, the multiplier is $\sqrt\lambda/(m+\lambda)$ with $\lambda\ge m$, so
\[
\nrm[\mux]{\nabla_x\phi}\le\left(\sup_{\lambda\ge m}\frac{\sqrt\lambda}{m+\lambda}\right)\nrm[\mux]{f_S}=\frac{1}{2\sqrt m}\nrm[\mux]{f_S},
\]
the supremum being attained at $\lambda=m$ because $\frac{\rmd}{\rmd\lambda}\frac{\sqrt\lambda}{m+\lambda}=\frac{m-\lambda}{2\sqrt\lambda(m+\lambda)^2}\le0$ for $\lambda\ge m$, with value $\sqrt m/(2m)=1/(2\sqrt m)$. Combining with \eqref{eq:Ls-chain} and Cauchy--Schwarz inequality,
\begin{equation}\label{eq:Ls-term}
\left|\ip{\Am\Ls f}{f}\right|\le\frac1{2\sqrt m}\,\nrm{\nabla_vf}\,\nrm{f_S}.
\end{equation}

\emph{Step 4: the $\La$ contribution.} Using $\Am=\Pv\Am$, $\Am\Pv=0$ and $\Pv\La f=\Pv\La f_F$ (the last from $\Pv\La\Pv=0$ in \eqref{eq:LaPv}), we decompose
\begin{equation}\label{eq:La-decomp}
\begin{aligned}
&\ip{\Am\La f}{f}+\ip{\Am f}{\La f}
\\
&=\ip{\Am\La f}{\Pv f}+\ip{\Am(1-\Pv)f}{\Pv\La f}
\\
&=\ip{\Am\La f_S}{f_S}+\ip{\Am\La f_F}{f_S}+\ip{\Am f_F}{\La f_F} .
\end{aligned}
\end{equation}
(In the first line: $\Am\La f$ is $x$-only, so the first pairing sees only $\Pv f$; and $\Am f=\Am(1-\Pv)f$ by \eqref{eq:Am-b}, while $\Am f_F$ is $x$-only so the second pairing sees only $\Pv\La f$. In the second line we expanded $\La f=\La f_S+\La f_F$ in the first pairing and used $\Pv\La f=\Pv\La f_F$ together with $\Pv(\Am f_F)=\Am f_F$ in the second.)
The first term of \eqref{eq:La-decomp} is the coercive one. By \eqref{eq:Am-c},
\[
\Am\La f_S=\Am\La\Pv f=(m-\Lo)^{-1}(-\Lo)f_S,
\]
and by the spectral theorem on $L^2_0(\mux)$ with \eqref{eq:gap},
\begin{equation}\label{eq:coercive-term}
\begin{aligned}
\ip{\Am\La f_S}{f_S}
&=\left\langle(m-\Lo)^{-1}(-\Lo)f_S,\,f_S\right\rangle_{L^2(\mux)}
\\
&=\int_m^\infty\frac{\lambda}{m+\lambda}\rmd\nu_{f_S}(\lambda)
\\
&\ge\left(\min_{\lambda\ge m}\frac{\lambda}{m+\lambda}\right)\,\nrm[\mux]{f_S}^2
=\frac12\,\nrm[\mux]{f_S}^2,
\end{aligned}
\end{equation}
the minimum being attained at $\lambda=m$ since $\lambda\mapsto\lambda/(m+\lambda)$ is increasing. 
The two remaining terms of \eqref{eq:La-decomp} are error terms, controlled by Lemma~\ref{lem:corrector}. Since $(1-\Pv)f_F=f_F$, \eqref{eq:AmLa-bound} applied to $\phi=f_F$ gives
\begin{equation}\label{eq:err1}
\left|\ip{\Am\La f_F}{f_S}\right|\le\nrm{\Am\La(1-\Pv)f_F}\,\nrm{f_S}\le\sqrt{2+\frac K{2m}}\,\nrm{f_F}\,\nrm{f_S},
\end{equation}
and by Lemma~\ref{lem:antisym-sob} followed by \eqref{eq:LaAm-bound},
\footnote{Lemma~\ref{lem:antisym-sob} is applied here with $f=f_F$ and $g=u:=\Am f_F\in D(\Lo)$, i.e.\ $g_0=u$ and $G=0$; then \eqref{eq:sob-specular} is vacuous --- a $v$-independent function is specular automatically --- and only $u\in H^1(\Omega)$ is needed.}
\begin{equation}\label{eq:err2}
\left|\ip{\Am f_F}{\La f_F}\right|=\left|\ip{\La\Am f_F}{f_F}\right|\le\nrm{\La\Am f_F}\,\nrm{f_F}\le\nrm{f_F}^2.
\end{equation}
Combining \eqref{eq:La-decomp}--\eqref{eq:err2},
\begin{equation}\label{eq:La-total}
\begin{aligned}
&\ip{\Am\La f}{f}+\ip{\Am f}{\La f}
\\
&\ge \frac12\nrm{f_S}^2-\sqrt{2+\frac K{2m}}\,\nrm{f_F}\nrm{f_S}-\nrm{f_F}^2 .
\end{aligned}
\end{equation}

\emph{Step 5: the quadratic form.} Insert \eqref{eq:uncorrected}, \eqref{eq:Ls-first}, \eqref{eq:Ls-term} and \eqref{eq:La-total} into \eqref{eq:Deps}--\eqref{eq:split}, and use $\nrm{f_F}\le\nrm{\nabla_vf}$, which is the velocity Poincar\'e inequality \eqref{eq:gauss-poincare} integrated in $x$:
\begin{align*}
\mathscr D_\varepsilon(f)
&\ge\gamma\nrm{\nabla_vf}^2
\\
&\qquad
+\varepsilon\left(\frac12\nrm{f_S}^2
-\left(\frac{\gamma}{2\sqrt m}+\sqrt{2+\frac K{2m}}\right)\nrm{\nabla_vf}\nrm{f_S}
-\nrm{\nabla_vf}^2\right).
\end{align*}
Setting
\begin{equation}\label{eq:abz}
\alpha:=\nrm{\nabla_vf},\qquad\beta:=\nrm{f_S},\qquad
\zeta:=\frac{\gamma}{2\sqrt m}+\sqrt{2+\frac K{2m}},
\end{equation}
this reads exactly as the matrix bound of \cite{FLL26}:
\begin{equation}\label{eq:matrix}
\mathscr D_\varepsilon(f)\ge
\begin{pmatrix}\alpha&\beta\end{pmatrix}
M_{\gamma,\varepsilon}
\begin{pmatrix}\alpha\\\beta\end{pmatrix},
\qquad
M_{\gamma,\varepsilon}=
\begin{pmatrix}
\gamma-\varepsilon&-\frac{\varepsilon\zeta}{2}\\[2pt]
-\frac{\varepsilon\zeta}{2}&\frac{\varepsilon}{2}
\end{pmatrix}.
\end{equation}

\emph{Step 6: optimization and conclusion.} Since $\zeta$, and hence $M_{\gamma,\varepsilon}$, is \emph{identical} to the whole-space case, the optimization of $(\gamma,\varepsilon)$ in \cite[\S3]{FLL26} applies verbatim; the outcome is that the choices
\[
\gamma_*=\sqrt{16m+2K},\qquad \varepsilon_*=\frac{\gamma_*}{a(\gamma_*)},\qquad a(\gamma):=2+\zeta^2,
\]
give
\begin{equation}\label{eq:lam-coer}
\lambda_{\min}(M_{\gamma_*,\varepsilon_*})\ \ge\
\frac{\sqrt m}{4\left(\sqrt{2+\frac K{2m}}+\sqrt{4+\frac K{2m}}\right)}\ =:\ \lambda_{\mathrm{coer}},
\end{equation}
with
\begin{equation}
\varepsilon_*=\frac{\sqrt m}{\sqrt{4+\frac{K}{2m}}+\sqrt{2+\frac{K}{2m}}}\le\frac{\sqrt m}{2}.
\end{equation}
Since $\varepsilon_*\le\sqrt m/2<\sqrt m$, the norm equivalence \eqref{eq:equivalence} gives
\begin{equation}\label{eq:equiv-numbers}
\frac14\nrm f^2\le\mathsf L_m(f)\le\frac34\nrm f^2 .
\end{equation}
Moreover, by \eqref{eq:gauss-poincare},
\[
\nrm f^2=\nrm{f_F}^2+\nrm{f_S}^2\le\alpha^2+\beta^2,
\]
so \eqref{eq:matrix} and the upper bound in \eqref{eq:equiv-numbers} give
\begin{equation}\label{eq:final-coercivity}
\mathscr D_{\varepsilon_*}(f)\ \ge\ \lambda_{\mathrm{coer}}\left(\alpha^2+\beta^2\right)\ \ge\ \lambda_{\mathrm{coer}}\nrm f^2\ \ge\ \frac{4\lambda_{\mathrm{coer}}}{3}\,\mathsf L_m(f)
\qquad\text{for every }f\in\mathscr C_0 .
\end{equation}
As $\Am$ is bounded on $\Lmuz$ by \eqref{eq:Am-bound}, the form $\mathscr D_{\varepsilon_*}$ is continuous with respect to the graph norm of $\Lgen$; since $\mathscr C_0$ is a core (Assumption~\ref{ass:WP}), \eqref{eq:final-coercivity} extends to all $f\in D(\Lgen)$. Combining \eqref{eq:dLm} with \eqref{eq:final-coercivity} and setting
\[
\Lambda:=\frac23\lambda_{\mathrm{coer}}=\frac16\,\frac{\sqrt m}{\sqrt{2+\frac K{2m}}+\sqrt{4+\frac K{2m}}},
\]
we obtain
\[
\frac{\rmd}{\rmd t}\mathsf L_m(f(t))\le-2\Lambda\,\mathsf L_m(f(t)),
\]
and hence by Gr\"onwall's lemma
\[
\mathsf L_m(f(t))\le e^{-2\Lambda t}\mathsf L_m(f_0).
\]
Applying \eqref{eq:equiv-numbers} at both ends,
\[
\frac14\nrm{f(t)}^2\le\mathsf L_m(f(t))\le e^{-2\Lambda t}\mathsf L_m(f_0)\le\frac34e^{-2\Lambda t}\nrm{f_0}^2,
\]
that is,
\[
\nrm{f(t)}^2\le3e^{-2\Lambda t}\nrm{f_0}^2,
\]
which is \eqref{eq:main-decay} after taking square roots. The estimate extends to arbitrary $f_0\in\Lmuz$ by density and strong continuity of the semigroup: if $f_0^{(k)}\in\mathscr C_0$ with $f_0^{(k)}\to f_0$ in $\Lmuz$, then $e^{t\Lgen}f_0^{(k)}\to e^{t\Lgen}f_0$ for each $t$ by contractivity, and the inequality passes to the limit. Finally, when $K=0$ we have $\gamma_*=4\sqrt m$ and
\[
\Lambda=\frac16\cdot\frac{\sqrt m}{\sqrt2+2}=\frac{\sqrt m}{6(2+\sqrt2)}=\frac{(2-\sqrt2)\sqrt m}{6(2+\sqrt2)(2-\sqrt2)}=\frac{(2-\sqrt2)\sqrt m}{6\cdot2}=\frac{2-\sqrt2}{12}\sqrt m .
\]
This completes the proof.
\qed


\section{Extension to non-convex domains}\label{sec:nonconvex}

In this section, we extend the preceding results to bounded non-convex
domains. The only place in the proof of Theorem~\ref{thm:main} where
convexity is essential is the sign of the boundary term in the weighted
Reilly formula \eqref{eq:reilly}. On a non-convex domain, by applying a trace inequality \eqref{eq:trace-nc} to the second order estimate \eqref{eq:pretrace-nc} produces a term \(\theta\|\nabla_x^2h\|^2\) on the right-hand side, which is absorbed into the left-hand side to obtain \eqref{eq:hessian-nc}, hence, extends the key inequality \eqref{eq:hessian-bound} to a semiconvex domain.
As a result, the constant $2+\frac{K}{2m}$ of
Lemma~\ref{lem:corrector} is replaced by an explicit constant
$A=A(m,K,\sigma_0,\Theta)$, see \eqref{eq:A-def-nc}, in which the negative
part $\sigma_0$ of the boundary curvature enters in the same way as the
negative part $K$ of the potential curvature; everything else in the
hypocoercivity argument is untouched.

\subsection{The semiconvexity parameter}\label{subsec:nc-setting}
Throughout this section, $\Omega\subset\R^d$ is a bounded \emph{connected}
domain with $\partial\Omega\in C^{2,1}$, not necessarily convex, and
$U\in C^2(\overline\Omega)$. The measures $\mu,\mux,\kappa$, the operators
$\Lgen,\La,\Ls,\Pv,\Lo,\Am$, the specular class $\mathscr C$, and the
constant
$Z_x=\int_\Omega e^{-U(x)}\,\rmd x$
are defined exactly as in Section~\ref{sec:setting}. As in
Lemma~\ref{lem:reilly},
\[
\II(\tau,\tau'):=\langle D_\tau n,\tau'\rangle
\]
denotes the second fundamental form of $\partial\Omega$ with respect to
the outward unit normal.

\begin{assumption}[Semiconvex boundary]\label{ass:semiconvex}
There exists $\sigma_0\ge0$ such that
\begin{equation}\label{eq:semiconvex}
\II(\tau,\tau)\ge -\sigma_0|\tau|^2
\qquad
\text{for every }x\in\partial\Omega
\text{ and every }\tau\in T_x\partial\Omega .
\end{equation}
\end{assumption}

When $\Omega$ is convex, Assumption~\ref{ass:semiconvex} holds with
$\sigma_0=0$, so the results below contain those of
Section~\ref{sec:main} as a special case. More generally,
Assumption~\ref{ass:semiconvex} imposes no additional geometric
restriction on a bounded $C^{2,1}$ domain: since $\II$ is continuous on
the compact unit tangent bundle of $\partial\Omega$, one may always take
\begin{equation}\label{eq:sigma0-def}
\sigma_0
:=
\max\left\{
0,\,
-\min_{x\in\partial\Omega}
\min_{\tau\in T_x\partial\Omega:|\tau|=1}
\II_x(\tau,\tau)
\right\}
\le
\|\II\|_{L^\infty(\partial\Omega)}
<\infty .
\end{equation}
Thus the content of Theorem~\ref{thm:main-nc} below is the explicit
dependence of the decay rate on the negative part of the boundary
curvature, quantified by $\sigma_0$.

As in the convex case, hypotheses \textup{(i)}--\textup{(ii)} of
Theorem~\ref{thm:main-nc} are automatically satisfied for some finite
$m>0$ and $K\ge0$. Indeed, $\Omega$ is bounded and connected with
Lipschitz boundary, hence the Neumann Poincar\'e inequality holds for
Lebesgue measure on $\Omega$; since $U$ is continuous on the compact set
$\overline\Omega$, the density $e^{-U}$ is bounded above and below, and
therefore the corresponding weighted Poincar\'e inequality also holds.
Moreover, $\nabla^2U$ is continuous on $\overline\Omega$, so
$\nabla^2U\succeq-K\,\mathrm{Id}$ for some finite $K\ge0$.

\subsection{Well-posedness on a non-convex domain}

The geometric construction of specularly reflected Langevin processes
does not inherently require convexity. In particular,
\cite{BossyJabir15} treats bounded confinement domains whose boundary
is a compact $C^3$ submanifold of $\R^d$, without imposing convexity,
for bounded measurable drifts (our drift
$-\nabla U(x)-\gamma v$ is bounded in $x$ and linear in $v$, so this
covers \eqref{eq:sde} after a standard localization in the velocity
variable), and shows that the successive collision times do not
accumulate and that the process never reaches the grazing
set $\Gamma_0$ of \eqref{eq:grazing}, see \cite[\S2]{BossyJabir15}.
Nevertheless, Theorem~\ref{thm:main-nc} retains
Assumption~\ref{ass:WP}, because that assumption contains more than
pathwise or weak well-posedness of the reflected process: it also
requires the strongly continuous contraction semigroup on $L^2(\mu)$,
the identification of its generator with \eqref{eq:bke}, and the core
property of $\mathscr C_0$.

\subsection{The two inputs that referred to convexity}
\label{subsec:nc-inputs}
We first record that the functional-analytic framework of
Section~\ref{sec:setting} remains available without convexity.

\begin{lemma}[The Neumann realization on a $C^{1,1}$ domain]
\label{lem:neumann-nc}
Let $\Omega\subset\R^d$ be a bounded connected domain with
$\partial\Omega\in C^{1,1}$ and let $U\in C^2(\overline\Omega)$.
Let $-\Lo$ be the nonnegative self-adjoint operator on $L^2(\mux)$
associated with the closed form $\mathcal E$ on $H^1(\Omega)$.
Then
\begin{equation}\label{eq:neumann-domain-nc}
D(\Lo)
=
\left\{
h\in H^2(\Omega):
\partial_nh=0
\text{ $\sigma$-a.e.\ on }\partial\Omega
\right\},
\qquad
\Lo h=\Delta_xh-\nabla_xU\cdot\nabla_xh,
\end{equation}
and there exists $C=C(\Omega,U)$ such that
\begin{equation}\label{eq:apriori-H2-nc}
\|h\|_{H^2(\Omega)}
\le
C\left(
\nrm[\mux]{\Lo h}+\nrm[\mux]{h}
\right),
\qquad h\in D(\Lo).
\end{equation}
\end{lemma}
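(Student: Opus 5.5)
The plan is to copy the proof of Lemma~\ref{lem:neumann} almost verbatim. The only change is the elliptic regularity input: instead of the convex-domain $H^2$ theorem \cite[Thm.~3.2.1.3]{Grisvard85}, I will use the $H^2$ regularity theorem for the Neumann problem on bounded domains with $C^{1,1}$ boundary. This is the case $k=0$ of the $H^{k+2}$ regularity for $C^{k+1,1}$ boundaries quoted in Remark~\ref{rem:C21}; see \cite[Thm.~2.4.2.7 / \S2.5]{Grisvard85}.

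\emph{Inclusion $\subseteq$.} Let $h\in D(\Lo)$. By definition of the form, $\mathcal E(h,\psi)=\ip[\mux]{-\Lo h}{\psi}$ for all $\psi\in H^1(\Omega)$. Unwinding the weight $e^{-U}$, this says that $h\in H^1(\Omega)$ is a variational solution of
\[
\Delta h=F:=\Lo h+\nabla U\cdot\nabla h\ \text{ in }\Omega,\qquad \partial_nh=0\ \text{ on }\partial\Omega .
\]
Here $F\in L^2(\Omega)$, since $\nabla U$ is bounded. To apply the regularity theorem in its coercive form, I will rewrite the problem as $-\Delta h+h=h-F\in L^2$. The $C^{1,1}$ theorem then gives $h\in H^2(\Omega)$ together with the bound $\|h\|_{H^2}\le C(\|F\|_{L^2}+\|h\|_{L^2})$. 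Since $h\in H^2$, the conormal condition holds in the trace sense, so $\partial_nh=0$ $\sigma$-a.e. Integrating by parts with Lemma~\ref{lem:green} then identifies $\Lo h=\Delta h-\nabla U\cdot\nabla h$ a.e. Note that Lemma~\ref{lem:green} only requires a Lipschitz domain, which a $C^{1,1}$ domain is.

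\emph{The a priori estimate \eqref{eq:apriori-H2-nc}.} From the bound above,
\[
\|h\|_{H^2}\le C(\|\Lo h\|+\|\nabla U\|_\infty\|\nabla h\|+\|h\|).
\]
I will absorb the gradient term exactly as in Lemma~\ref{lem:neumann}: $\nrm[\mux]{\nabla h}^2=\ip[\mux]{-\Lo h}{h}\le\tfrac12(\nrm[\mux]{\Lo h}+\nrm[\mux]{h})^2$. The weighted and unweighted norms are equivalent because $e^{-U}$ is bounded above and below on $\overline\Omega$.

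\emph{Inclusion $\supseteq$.} Let $h\in H^2$ with $\partial_nh=0$. Then Lemma~\ref{lem:green} with $a=\psi\in H^1$ and $b=h$ gives $\mathcal E(\psi,h)=\ip[\mux]{\psi}{-(\Delta h-\nabla U\cdot\nabla h)}$ with no boundary term. Hence $h\in D(\Lo)$.

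Connectedness of $\Omega$ is used only to guarantee that $\ker\Lo$ is spanned by the constants, as later needed for \eqref{eq:gap}; it plays no role in the regularity argument.

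The main, essentially only, obstacle is quoting the correct regularity theorem. It must apply to variational solutions with $L^2$ data on $C^{1,1}$ domains, and it must cover the pure Neumann case. I handle the latter by shifting to $-\Delta+1$, which makes the problem coercive and avoids compatibility conditions. Everything else transfers verbatim from the convex case.
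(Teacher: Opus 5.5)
Your proposal is correct and follows essentially the same route as the paper. Both reuse the proof of Lemma~\ref{lem:neumann} and replace the convex-domain $H^2$ result by the $C^{1,1}$ Neumann regularity of \cite{Grisvard85} (the case $k=0$ of the $H^{k+2}$ scale), then absorb the first-order term and prove the reverse inclusion with Lemma~\ref{lem:green} exactly as before. Your shift to $-\Delta h+h=h-F$ is only a convenient way to put the problem in the coercive form in which Grisvard's existence--uniqueness theorem is stated, and it plays the same role as the paper's appeal to existence/uniqueness plus the closed graph theorem.
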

\begin{proof}
The proof is the same as that of Lemma~\ref{lem:neumann}, except for the argument of
elliptic regularity. In fact, convexity was used to obtain $H^2$
regularity of the variational solution of the Neumann problem from
$L^2$ data. On a bounded domain with $C^{1,1}$ boundary the
variational solution $h\in H^1(\Omega)$ of the Neumann problem
$\Delta h=F\in L^2(\Omega)$, $\partial_nh=0$, belongs to $H^2(\Omega)$
and satisfies
\[
\|h\|_{H^2(\Omega)}
\le
C\left(
\|\Delta h\|_{L^2(\Omega)}+\|h\|_{L^2(\Omega)}
\right),
\]
On the other hand, if the domain is not necessary convex,  this is the $L^p$ theory of the
Neumann (more generally, oblique) problem on $C^{1,1}$ domains in
\cite[\S2.3--\S2.4]{Grisvard85}, specialized to $p=2$; see
\cite[Thm.~2.3.3.2]{Grisvard85} for the regularity statement
$u\in W^{1,p}$, $\Delta u\in L^p$, $\partial_nu=0$ implies $u\in W^{2,p}$
on $C^{1,1}$ domains and \cite[Thm.~2.4.2.7]{Grisvard85} for the
corresponding existence and uniqueness statement in $W^{2,p}$, from which
the estimate follows by the closed graph theorem. In the scale of
\cite[Thm.~2.5.1.1]{Grisvard85}, already invoked with $k=1$ in Step~5 of
the proof of Lemma~\ref{lem:reilly}, this is the case $k=0$: a
$C^{k+1,1}$ boundary gives $H^{k+2}$ regularity for the Neumann problem
with $H^k$ data. Convexity is what allows one to lower the boundary
regularity from $C^{1,1}$ to Lipschitz in \cite[Thm.~3.2.1.3]{Grisvard85}.
Applying this to
\[
\Delta h=\Lo h+\nabla U\cdot\nabla h
\]
and absorbing the first-order term exactly as in the proof of
Lemma~\ref{lem:neumann} yields \eqref{eq:apriori-H2-nc}.
The reverse inclusion follows from the weighted Green formula
(Lemma~\ref{lem:green}) and also does not use convexity.
\end{proof}

\begin{remark}[The Reilly identity needs no convexity]
\label{rem:reilly-nc}
With Lemma~\ref{lem:neumann-nc} in hand, the weighted Reilly formula
\eqref{eq:reilly} of Lemma~\ref{lem:reilly} holds verbatim on a bounded
connected domain with $\partial\Omega\in C^{2,1}$.
Indeed, Steps~1--5 of its proof use only the pointwise
Bakry--\'Emery identity, the divergence theorem, the weighted Green
formula, tangential differentiation of
$\langle\nabla_xh,n\rangle=0$ on $\partial\Omega$, and elliptic
regularity for the Neumann problem (the $H^2$ a priori
estimate \eqref{eq:apriori-H2-nc} and the $H^3$ regularity of
\cite[Thm.~2.5.1.1]{Grisvard85} with $k=1$, both valid on $C^{2,1}$
domains without convexity). None of these requires convexity.
Convexity was used only in Step~6 to conclude that the boundary term has
a favorable sign.
Likewise, Lemmas~\ref{lem:antisym}, \ref{lem:antisym-sob},
\ref{lem:green}, Remark~\ref{rem:neumann-specular},
Propositions~\ref{prop:generator} and \ref{prop:invariance}, and the
bounds \eqref{eq:Am-bound}--\eqref{eq:LaAm-bound} of
Lemma~\ref{lem:corrector} do not use convexity. In the present section,
connectedness is assumed explicitly and ensures that
$\ker(-\Lo)=\operatorname{span}\{1\}$.
\end{remark}

\subsection{A boundary trace estimate with absorbable constant}
\label{subsec:nc-trace}
The boundary term left over from the weighted Reilly identity is
controlled by the following two lemmas. The first constructs a Lipschitz
extension of the outward normal, while the second turns the boundary
integral into interior norms with an arbitrarily small coefficient in
front of the second derivatives.

\begin{lemma}[Normal extension]\label{lem:normalfield-nc}
Let $\Omega\subset\R^d$ be bounded with $\partial\Omega\in C^{1,1}$ and
write
\[
r:=\reach(\partial\Omega)>0,
\]
for the reach of $\partial\Omega$ in the sense of Federer
\cite{Federer59}, i.e.\ the largest $r$ such that every point at distance
less than $r$ from $\partial\Omega$ has a unique nearest point on
$\partial\Omega$; a compact $C^{1,1}$ hypersurface has positive reach,
and $r$ is the largest radius for which $\Omega$ satisfies a uniform
interior \emph{and} exterior ball condition
\cite[Ch.~7]{DelfourZolesio11}.
For every $\delta\in(0,r/2]$, there exists
$\mathcal N\in W^{1,\infty}(\Omega;\R^d)$ such that
\begin{equation}\label{eq:N-props-nc}
|\mathcal N|\le1
\quad\text{on }\Omega,
\qquad
\mathcal N=n
\quad\text{$\sigma$-a.e.\ on }\partial\Omega,
\qquad
\supp\mathcal N
\subset
\left\{
x\in\Omega:
\dist(x,\partial\Omega)<\delta
\right\}.
\end{equation}
For such a vector field, define
\begin{equation}\label{eq:Theta-nc}
\Theta_{\mathcal N}
:=
\left\|
\nabla_x\cdot\mathcal N-\nabla_xU\cdot\mathcal N
\right\|_{L^\infty(\Omega)}
<\infty .
\end{equation}
\end{lemma}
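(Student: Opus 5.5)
The construction will go through the signed distance function and a cutoff. Let $\rho(x):=\dist(x,\partial\Omega)$ for $x\in\overline\Omega$, and let $T_r:=\{x\in\overline\Omega:\rho(x)<r\}$ be the inner tubular neighbourhood. By definition of the reach, every $x\in T_r$ has a unique nearest point $\pi(x)\in\partial\Omega$. Federer's theory \cite{Federer59} (see also \cite[Ch.~7]{DelfourZolesio11}) gives the regularity we need. First, $\rho$ is $C^{1}$ on $T_r\setminus\partial\Omega$, and in fact up to $\partial\Omega$ from inside. Second, $\nabla\rho(x)=-n(\pi(x))$. Third, $\pi$ is Lipschitz on $\{\rho\le s\}$ for every $s<r$, with constant at most $r/(r-s)$.

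With $\delta\in(0,r/2]$ we restrict to $\{\rho\le\delta\}$, where $\pi$ is Lipschitz with constant $\le 2$. Since $n\in C^{0,1}(\partial\Omega)$ for a $C^{1,1}$ boundary, the field $x\mapsto n(\pi(x))=-\nabla\rho(x)$ is Lipschitz on that set. Next we pick a cutoff $\chi\in C^\infty(\R)$ with $0\le\chi\le1$, $\chi(0)=1$, $\chi(s)=0$ for $s\ge\delta/2$, and $|\chi'|\le 4/\delta$. We then set
\[
\mathcal N(x):=\chi(\rho(x))\,n(\pi(x))\quad\text{for }\rho(x)<\delta,\qquad \mathcal N(x):=0\ \text{otherwise}.
\]

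The properties in \eqref{eq:N-props-nc} should follow directly. We have $|\mathcal N|\le1$ because $|n|=1$ and $0\le\chi\le1$. On $\partial\Omega$ we have $\rho=0$ and $\pi=\mathrm{id}$, so the trace of the Lipschitz field $\mathcal N$ equals $n$ pointwise, in particular $\sigma$-a.e. The support lies in $\{\rho\le\delta/2\}\subset\{\rho<\delta\}$.

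For the Lipschitz property, $\mathcal N$ is a product of bounded Lipschitz functions on $\{\rho<\delta\}$; recall $\rho$ is $1$-Lipschitz. It vanishes on the overlap $\{\delta/2\le\rho<\delta\}$, so it glues with the zero field into a globally Lipschitz field on $\Omega$. Rademacher's theorem then gives $\mathcal N\in W^{1,\infty}(\Omega;\R^d)$. We also obtain the explicit bound
\[
\|\nabla\mathcal N\|_\infty\le \frac{4}{\delta}+\mathrm{Lip}\big(n\circ\pi|_{\{\rho\le\delta\}}\big)\lesssim \frac1\delta+\frac{2}{r},
\]
using that $\mathrm{Lip}(n)$ is bounded by $1/r$ through the interior and exterior ball condition. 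This quantitative form will be useful for Remark~\ref{rem:Theta-size-nc}.

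Finiteness of $\Theta_{\mathcal N}$ is then immediate. The divergence $\nabla\cdot\mathcal N$ is in $L^\infty$ because $\mathcal N\in W^{1,\infty}$, and $\nabla U\in C^1(\overline\Omega)$ is bounded. Hence $\Theta_{\mathcal N}\le\sqrt d\,\|\nabla\mathcal N\|_{\infty}+\|\nabla U\|_\infty<\infty$.

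The main obstacle is the regularity of $\pi$ and of $\nabla\rho$ near the boundary for only $C^{1,1}$ boundaries, specifically the Lipschitz bound on $\pi$ in $\{\rho\le r/2\}$. I would cite Federer's Theorem 4.8 \cite{Federer59}, which states that $\pi$ is Lipschitz with constant $r/(r-s)$ on $\{\rho\le s\}$ and that $\rho^2$ has a Lipschitz gradient there. If I wanted to avoid the full reach theory, the fallback is to work in the local graph charts of Definition~\ref{def:Ck1} with the map $(y,t)\mapsto y-t\,n(y)$, and use the Lipschitz inverse function theorem for $t<r$. I would take $\delta\le r/2$ exactly so that the constant $r/(r-\delta)\le 2$ stays uniform.
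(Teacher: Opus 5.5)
Your construction $\mathcal N=\chi(\rho)\,n\circ\pi=-\chi(\rho)\nabla\rho$, with a cutoff vanishing beyond $\delta/2$, is exactly the paper's. The only difference is in justifying $\mathcal N\in W^{1,\infty}$: you use the Lipschitz bound $r/(r-\delta)\le 2$ on the nearest-point projection (Federer, Thm.~4.8) together with the Lipschitz continuity of $n$, while the paper cites $C^{1,1}$ regularity of $\rho$ on the collar and computes $\nabla\cdot\mathcal N=-\chi'(\rho)-\chi(\rho)\Delta\rho$ directly. Your argument is correct, up to a harmless constant: your bound on $\|\nabla\mathcal N\|_\infty$ is an operator-norm (Lipschitz) bound, so it gives $|\nabla\cdot\mathcal N|\le d\,\|\nabla\mathcal N\|_\infty$ rather than $\sqrt d\,\|\nabla\mathcal N\|_\infty$, and this does not affect the finiteness of $\Theta_{\mathcal N}$.
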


\begin{proof}
Let
\[
\rho(x):=\dist(x,\partial\Omega).
\]
Since $\partial\Omega\in C^{1,1}$, it has positive reach and $\rho$ is
$C^{1,1}$ in a tubular neighborhood of $\partial\Omega$
(\cite[Thm.~4.8]{Federer59}, \cite[Ch.~7, Thm.~8.2]{DelfourZolesio11};
when $\partial\Omega\in C^k$ with $k\ge2$, which is the case under the
standing assumption $\partial\Omega\in C^{2,1}$ of this section, $\rho$ is
even $C^k$ on $\{x\in\overline\Omega:\rho(x)<r\}$ by
\cite[Lemma~14.16]{GilbargTrudinger01}). On the interior
collar one has $|\nabla\rho|=1$ and
\[
\nabla\rho=-n
\qquad\text{on }\partial\Omega,
\]
because $\rho$ increases in the inward normal direction.
Choose $\chi\in C^{0,1}([0,\infty))$ satisfying
\[
0\le\chi\le1,\qquad
\chi(0)=1,\qquad
\chi\equiv0\ \text{on }[\delta/2,\infty),
\qquad
|\chi'|\le\frac{2}{\delta},
\]
and define
\[
\mathcal N:=-\chi(\rho)\nabla\rho
\]
in the tubular neighborhood, extended by $0$ to the rest of $\Omega$.
Then $|\mathcal N|\le1$, $\mathcal N=n$ on $\partial\Omega$, and
$\supp\mathcal N\subset\{\rho<\delta\}$.
Furthermore,
\[
\nabla_x\cdot\mathcal N
=
-\chi'(\rho)|\nabla\rho|^2
-\chi(\rho)\Delta\rho,
\]
which belongs to $L^\infty(\Omega)$ because $\rho$ is $C^{1,1}$ on the
support of $\mathcal N$. Since $\nabla U$ is bounded on
$\overline\Omega$, \eqref{eq:Theta-nc} follows.
\end{proof}

\begin{remark}[Size of $\Theta_{\mathcal N}$]
\label{rem:Theta-size-nc}
Take $\delta=r/2$. On the support of $\mathcal N$ one has
$\rho\le r/4$. Let $\kappa_1,\dots,\kappa_{d-1}$ denote the
principal curvatures of $\partial\Omega$ with respect to the outward
normal (so that $\kappa_i\ge0$ on a convex domain), which exist
$\sigma$-a.e.\ and satisfy $|\kappa_i|\le1/r$ by the two-sided ball
condition of radius $r$. At a point $x$ of the collar with nearest
boundary point $y$, the Hessian of $\rho$ is, in principal coordinates at
$y$,
\[
\nabla^2\rho(x)
=
\operatorname{diag}\left(
-\frac{\kappa_1(y)}{1-\rho(x)\kappa_1(y)},\dots,
-\frac{\kappa_{d-1}(y)}{1-\rho(x)\kappa_{d-1}(y)},\,0\right),
\]
see \cite[Lemma~14.17]{GilbargTrudinger01} (the sign is dictated by
$\nabla\rho=-n$; for the ball $B_R$ one has $\rho=R-|x|$ and
$\Delta\rho=-(d-1)/(R-\rho)$). Since $\rho\le r/4$ and $|\kappa_i|\le1/r$
on the support of $\mathcal N$,
\[
|\Delta\rho|
\le
\sum_{i=1}^{d-1}\frac{|\kappa_i|}{1-\rho|\kappa_i|}
\le
\frac{(d-1)/r}{1-1/4}
=
\frac{4(d-1)}{3r}
\le
\frac{2(d-1)}{r}
\qquad\text{on }\supp\mathcal N,
\]
while $|\chi'|\le4/r$. Consequently,
\[
|\nabla_x\cdot\mathcal N|\le\frac4r+\frac{2(d-1)}{r}=\frac{2(d+1)}{r},
\]
and one may choose $\mathcal N$ such that
\begin{equation}\label{eq:Theta-explicit-nc}
\Theta_{\mathcal N}
\le
\frac{2(d+1)}{\reach(\partial\Omega)}
+
\|\nabla U\|_{L^\infty(\Omega)}.
\end{equation}
Notice that Lemma~\ref{lem:trace-nc}
below uses only the first two properties in \eqref{eq:N-props-nc}, so
that $\Theta_{\mathcal N}$ may be replaced throughout by the infimum of
\eqref{eq:Theta-nc} over all $\mathcal N\in W^{1,\infty}(\Omega;\R^d)$
with $|\mathcal N|\le1$ and $\mathcal N=n$ on $\partial\Omega$; the
support condition merely serves to produce the explicit bound
\eqref{eq:Theta-explicit-nc}.
\end{remark}

\begin{lemma}[Boundary trace absorption]\label{lem:trace-nc}
Let $\Omega\subset\R^d$ be bounded with $\partial\Omega\in C^{1,1}$,
let $U\in C^1(\overline\Omega)$, and let $\mathcal N$ and
$\Theta_{\mathcal N}$ be as in Lemma~\ref{lem:normalfield-nc}.
Then for every $h\in H^2(\Omega)$ and every $\varepsilon>0$,
\begin{equation}\label{eq:trace-nc}
\frac1{Z_x}
\int_{\partial\Omega}
|\nabla_xh|^2e^{-U}\,\rmd\sigma
\le
\varepsilon\nrm[\mux]{\nabla_x^2h}^2
+
\left(
\varepsilon^{-1}+\Theta_{\mathcal N}
\right)
\nrm[\mux]{\nabla_xh}^2.
\end{equation}
\end{lemma}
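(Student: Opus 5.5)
The plan is a weighted divergence-theorem argument applied to the vector field $|\nabla_xh|^2e^{-U}\mathcal N$, with $\mathcal N$ the normal extension of Lemma~\ref{lem:normalfield-nc}. By density it suffices to take $h\in C^2(\overline\Omega)$ (or $H^3$); both sides of \eqref{eq:trace-nc} are continuous in $H^2(\Omega)$. For the left side this uses the trace theorem, $\|\nabla h_k-\nabla h\|_{L^2(\partial\Omega)}\le C_{\mathrm{tr}}\|h_k-h\|_{H^2}$, exactly as in Step~5 of Lemma~\ref{lem:reilly}.

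For smooth $h$, the field $F:=|\nabla_xh|^2e^{-U}\mathcal N$ lies in $W^{1,1}(\Omega)$, since $\mathcal N\in W^{1,\infty}$. Gauss--Green on the Lipschitz domain $\Omega$, together with $\mathcal N\cdot n=|n|^2=1$ $\sigma$-a.e.\ on $\partial\Omega$, gives
\[
\frac1{Z_x}\int_{\partial\Omega}|\nabla_xh|^2e^{-U}\rmd\sigma=\int_\Omega\Big(2\,\nabla_x^2h(\nabla_xh,\mathcal N)+|\nabla_xh|^2\big(\nabla_x\cdot\mathcal N-\nabla_xU\cdot\mathcal N\big)\Big)\rmd\mux .
\]
The second term is at most $\Theta_{\mathcal N}\nrm[\mux]{\nabla_xh}^2$ by definition \eqref{eq:Theta-nc}. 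For the first term, $|\mathcal N|\le1$ and the operator-norm bound by the Frobenius norm give the pointwise estimate $|\nabla^2_xh(\nabla_xh,\mathcal N)|\le\|\nabla_x^2h\|_F|\nabla_xh|$. Young's inequality $2ab\le\varepsilon a^2+\varepsilon^{-1}b^2$ then bounds it by $\varepsilon\nrm[\mux]{\nabla_x^2h}^2+\varepsilon^{-1}\nrm[\mux]{\nabla_xh}^2$. Adding the two bounds yields \eqref{eq:trace-nc}.

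The only delicate point is justifying Gauss--Green with a merely Lipschitz $\mathcal N$ and a boundary trace. I would first treat $h\in C^2(\overline\Omega)$, where $F$ is Lipschitz on $\overline\Omega$ and the divergence theorem for $W^{1,\infty}$ fields on Lipschitz domains applies, with the product rule valid a.e. The identity $\mathcal N=n$ is used only through its trace, which is legitimate since $\mathcal N$ is continuous up to the boundary by construction. I then pass to $H^2$ by approximation in $H^2(\Omega)$; such approximations exist on $C^{1,1}$ domains, and no Neumann condition is needed here.
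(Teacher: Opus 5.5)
Your proposal is correct and follows the paper's proof essentially step for step. Both arguments apply the divergence theorem to $|\nabla_xh|^2e^{-U}\mathcal N$ with $\mathcal N\cdot n=1$ on $\partial\Omega$, bound the Hessian term by $|\nabla_x^2h(\nabla_xh,\mathcal N)|\le\|\nabla_x^2h\|_F|\nabla_xh|$ plus Young's inequality, bound the remaining term through the definition of $\Theta_{\mathcal N}$, and pass to $H^2(\Omega)$ by density and the trace theorem. Your extra justification of Gauss--Green for the merely Lipschitz field $\mathcal N$, using that the product field is $W^{1,\infty}$ when $h\in C^2(\overline\Omega)$, is a small point the paper passes over silently.
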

\begin{proof}
Assume first that $h\in C^2(\overline\Omega)$. Applying the divergence
theorem to
$|\nabla_xh|^2e^{-U}\mathcal N$,
and using $\mathcal N=n$ on $\partial\Omega$, we obtain
\[
\int_{\partial\Omega}|\nabla_xh|^2e^{-U}\,\rmd\sigma = \int_{\partial\Omega}|\nabla_x h|^2 e^{-U}\underbrace{(n\cdot \mathcal{N})}_{=|n|^2=1}\,\rmd\sigma
=
\int_\Omega
\nabla_x\cdot
\left(
|\nabla_xh|^2e^{-U}\mathcal N
\right)
\,\rmd x.
\]
Since
\[
\nabla_x|\nabla_xh|^2
=
2\nabla_x^2h\,\nabla_xh,
\]
we have
\[
\nabla_x\cdot
\left(
|\nabla_xh|^2e^{-U}\mathcal N
\right)
=
e^{-U}
\left[
2\nabla_x^2h(\nabla_xh,\mathcal N)
+
|\nabla_xh|^2
\left(
\nabla_x\cdot\mathcal N-\nabla_xU\cdot\mathcal N
\right)
\right].
\]
Dividing by $Z_x$ gives
\[
\frac1{Z_x}
\int_{\partial\Omega}
|\nabla_xh|^2e^{-U}\,\rmd\sigma
=
2
\int_\Omega
\nabla_x^2h(\nabla_xh,\mathcal N)\,\rmd\mux
+
\int_\Omega
|\nabla_xh|^2
\left(
\nabla_x\cdot\mathcal N-\nabla_xU\cdot\mathcal N
\right)
\,\rmd\mux.
\]
Since $|\mathcal N|\le1$,
\[
\left|
\nabla_x^2h(\nabla_xh,\mathcal N)
\right|
\le
\|\nabla_x^2h\|_F|\nabla_xh|.
\]
Hence the Cauchy--Schwarz inequality and
$2ab\le\varepsilon a^2+\varepsilon^{-1}b^2$ imply
\[
2
\left|
\int_\Omega
\nabla_x^2h(\nabla_xh,\mathcal N)\,\rmd\mux
\right|
\le
\varepsilon\nrm[\mux]{\nabla_x^2h}^2
+
\varepsilon^{-1}\nrm[\mux]{\nabla_xh}^2.
\]
The remaining term is bounded by
$\Theta_{\mathcal N}\nrm[\mux]{\nabla_xh}^2$.
This proves \eqref{eq:trace-nc} for smooth $h$.
For general $h\in H^2(\Omega)$, approximate $h$ in $H^2(\Omega)$ by
smooth functions (possible on a bounded Lipschitz domain).
The interior terms converge directly, while the
boundary term converges by the trace theorem applied to $\nabla h$.
\end{proof}

\subsection{The second-order estimate}\label{subsec:nc-hessian}
For $\theta\in(0,1)$, define
\begin{equation}\label{eq:Ktheta-nc}
K_\theta
:=
K+\frac{\sigma_0^2}{\theta}
+\sigma_0\Theta_{\mathcal N}.
\end{equation}

Now we can utilize Lemma \ref{lem:trace-nc} to obtain a second-order estimate that extends \eqref{eq:hessian-bound} to domains with semi-convex boundary. 

\begin{proposition}[Second-order estimate for a semiconvex boundary]
\label{prop:hessian-nc}
Let $\Omega$ be a bounded connected domain with
$\partial\Omega\in C^{2,1}$ satisfying
Assumption~\ref{ass:semiconvex}. Let
$U\in C^2(\overline\Omega)$ satisfy
\[
\nabla^2U\succeq-K\,\mathrm{Id},
\]
for some $K\ge0$, and let $\mathcal N,\Theta_{\mathcal N}$ be as in
Lemma~\ref{lem:normalfield-nc}. Then, for every $h\in D(\Lo)$ and every
$\theta\in(0,1)$,
\begin{equation}\label{eq:hessian-nc}
\nrm[\mux]{\nabla_x^2h}^2
\le
\frac1{1-\theta}
\left(
\nrm[\mux]{\Lo h}^2
+
K_\theta\nrm[\mux]{\nabla_xh}^2
\right).
\end{equation}
When $\sigma_0=0$ one has, in addition, the sharper bound
\eqref{eq:hessian-bound}, which is the limit $\theta\downarrow0$ of
\eqref{eq:hessian-nc}.
\end{proposition}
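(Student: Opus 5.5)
We start from the weighted Reilly identity \eqref{eq:reilly}. By Remark~\ref{rem:reilly-nc} it holds verbatim for every $h\in D(\Lo)$ on a bounded connected $C^{2,1}$ domain, with no convexity needed. Rearranging as in \eqref{last:middle}, we get
\[
\nrm[\mux]{\nabla_x^2h}^2=\nrm[\mux]{\Lo h}^2-\int_\Omega\langle\nabla^2U\nabla_xh,\nabla_xh\rangle\,\rmd\mux-\frac1{Z_x}\int_{\partial\Omega}\II(\nabla_xh,\nabla_xh)\,e^{-U}\rmd\sigma .
\]
The potential term is bounded by $K\nrm[\mux]{\nabla_xh}^2$ exactly as in Step~6 of Lemma~\ref{lem:reilly}.

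For the boundary term, we use that $\partial_nh=0$ $\sigma$-a.e. Hence the trace of $\nabla_xh$ is tangent to $\partial\Omega$ $\sigma$-a.e., and Assumption~\ref{ass:semiconvex} applies pointwise. It gives $-\II(\nabla_xh,\nabla_xh)\le\sigma_0|\nabla_xh|^2$, and therefore
\[
\nrm[\mux]{\nabla_x^2h}^2\le\nrm[\mux]{\Lo h}^2+K\nrm[\mux]{\nabla_xh}^2+\frac{\sigma_0}{Z_x}\int_{\partial\Omega}|\nabla_xh|^2e^{-U}\rmd\sigma .
\]
This is the ``pretrace'' second-order estimate. The tangentiality is legitimate in the trace sense because $h\in H^2$ gives $\nabla_xh\in H^1$ with trace in $L^2(\partial\Omega)$.

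Next we apply Lemma~\ref{lem:trace-nc} to $h\in D(\Lo)\subset H^2(\Omega)$, with its free parameter chosen as $\varepsilon=\theta/\sigma_0$ when $\sigma_0>0$. Multiplying by $\sigma_0$, the boundary contribution is at most
\[
\theta\nrm[\mux]{\nabla_x^2h}^2+\Big(\frac{\sigma_0^2}{\theta}+\sigma_0\Theta_{\mathcal N}\Big)\nrm[\mux]{\nabla_xh}^2 .
\]
Substituting this in produces exactly $K_\theta$ from \eqref{eq:Ktheta-nc} in front of $\nrm[\mux]{\nabla_xh}^2$. Since $\nrm[\mux]{\nabla_x^2h}^2<\infty$ for $h\in H^2$ and $\theta<1$, we may absorb $\theta\nrm[\mux]{\nabla_x^2h}^2$ into the left-hand side and divide by $1-\theta$, which yields \eqref{eq:hessian-nc}.

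If $\sigma_0=0$, the boundary term is already nonpositive and no trace inequality is needed. We then recover \eqref{eq:hessian-bound} directly, and it coincides with the $\theta\downarrow0$ limit of \eqref{eq:hessian-nc} because $K_\theta=K$ in that case.

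The only delicate point is the pointwise use of semiconvexity on the trace of $\nabla_xh$. It needs the tangentiality to hold $\sigma$-a.e. and $\II$ to be bounded, so that the integral makes sense. Both are already supplied by Lemma~\ref{lem:neumann-nc} and Remark~\ref{rem:C21}. Everything else is bookkeeping of constants.
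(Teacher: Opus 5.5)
Your proposal is correct and follows the paper's proof essentially step by step: the Reilly identity, semiconvexity applied to the tangential trace of $\nabla_x h$, Lemma~\ref{lem:trace-nc} with $\varepsilon=\theta/\sigma_0$, then absorption and division by $1-\theta$. The one thing to state explicitly is that when $\sigma_0=0$ the inequality \eqref{eq:hessian-nc} itself also holds, since it follows from \eqref{eq:hessian-bound} using $K_\theta=K$ and $1/(1-\theta)\ge1$, as the paper notes.
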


\begin{proof}
By Remark~\ref{rem:reilly-nc}, the weighted Reilly identity
\eqref{eq:reilly} remains valid:
\[
\nrm[\mux]{\nabla_x^2h}^2
=
\nrm[\mux]{\Lo h}^2
-
\int_\Omega
\langle
\nabla^2U\nabla_xh,\nabla_xh
\rangle
\,\rmd\mux
-
\frac1{Z_x}
\int_{\partial\Omega}
\II(\nabla_xh,\nabla_xh)e^{-U}\,\rmd\sigma.
\]
Since $h\in D(\Lo)$, $\partial_nh=0$, so $\nabla_xh$ is tangent to
$\partial\Omega$ almost everywhere. Therefore
Assumption~\ref{ass:semiconvex} gives
\[
-\II(\nabla_xh,\nabla_xh)
\le
\sigma_0|\nabla_xh|^2.
\]
Together with $\nabla^2U\succeq-K\,\mathrm{Id}$,
\begin{equation}\label{eq:pretrace-nc}
\nrm[\mux]{\nabla_x^2h}^2
\le
\nrm[\mux]{\Lo h}^2
+
K\nrm[\mux]{\nabla_xh}^2
+
\frac{\sigma_0}{Z_x}
\int_{\partial\Omega}
|\nabla_xh|^2e^{-U}\,\rmd\sigma.
\end{equation}
If $\sigma_0=0$, the last term vanishes and
\eqref{eq:hessian-bound} follows immediately; since
$K_\theta=K$ in that case and $1/(1-\theta)\ge1$, \eqref{eq:hessian-nc}
follows a fortiori.
Now suppose $\sigma_0>0$. Apply
Lemma~\ref{lem:trace-nc} with
$\varepsilon=\theta/\sigma_0$.
Then \eqref{eq:pretrace-nc} gives
\[
\nrm[\mux]{\nabla_x^2h}^2
\le
\nrm[\mux]{\Lo h}^2
+
K\nrm[\mux]{\nabla_xh}^2
+
\theta\nrm[\mux]{\nabla_x^2h}^2
+
\sigma_0
\left(
\frac{\sigma_0}{\theta}
+
\Theta_{\mathcal N}
\right)
\nrm[\mux]{\nabla_xh}^2.
\]
Absorbing the term
$\theta\nrm[\mux]{\nabla_x^2h}^2$ into the left-hand side (it
is finite since $h\in H^2(\Omega)$) and dividing
by $1-\theta$ gives \eqref{eq:hessian-nc}.
\end{proof}

\subsection{The corrector bound and the decay rate}
\label{subsec:nc-main}
Define
\begin{equation}\label{eq:A-def-nc}
A(\theta)
:=
\frac{2}{1-\theta}
\left(
1+\frac{K_\theta}{4m}
\right),
\qquad
A
:=
\inf_{\theta\in(0,1)}A(\theta).
\end{equation}
Then $A\ge2$. If $\sigma_0=0$, then
$A(\theta)=\frac{2}{1-\theta}\left(1+\frac{K}{4m}\right)$ is increasing in
$\theta$ and
\begin{equation}\label{eq:A-convex-limit}
A=\lim_{\theta\downarrow0}A(\theta)=2+\frac{K}{2m},
\end{equation}
which is exactly the constant appearing in
\eqref{eq:AmLa-bound}. If $\sigma_0>0$, then
$A(\theta)\to\infty$ both as $\theta\downarrow0$ and as
$\theta\uparrow1$, so the infimum is attained at an interior point
$\theta_*\in(0,1)$, computed explicitly in
Remark~\ref{rem:theta-nc}.

We now extend the bounds on the corrector in Lemma \ref{lem:corrector} to semiconvex domains in the following proposition. 

\begin{proposition}[Corrector bound for a semiconvex boundary]
\label{prop:corrector-nc}
Under the hypotheses of Proposition~\ref{prop:hessian-nc}, assume in
addition that $\mux$ satisfies the Poincar\'e inequality
\eqref{eq:poincare} with constant $m>0$. Then, for every
$\phi\in\mathscr C_0$,
\begin{equation}\label{eq:AmLa-nc}
\nrm{\Am\La(1-\Pv)\phi}
\le
\sqrt A\,\nrm{\phi}.
\end{equation}
The bounds \eqref{eq:Am-bound} and \eqref{eq:LaAm-bound} of
Lemma~\ref{lem:corrector} remain unchanged, as does the norm equivalence
\eqref{eq:equivalence}.
\end{proposition}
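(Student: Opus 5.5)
The proof repeats Steps~2--5 of the proof of Lemma~\ref{lem:corrector} word for word, with one change: the second-order estimate \eqref{eq:hessian-bound} is replaced by \eqref{eq:hessian-nc} from Proposition~\ref{prop:hessian-nc}. By Remark~\ref{rem:reilly-nc}, the following ingredients never used convexity: the duality reduction \eqref{eq:normB}, the adjoint computation \eqref{eq:Bstar}--\eqref{eq:Bstar2}, the explicit form \eqref{eq:Bstar-explicit}, and the Gaussian moment identity \eqref{eq:Bstar-norm}. They rely only on Lemma~\ref{lem:antisym-sob}, Lemma~\ref{lem:neumann-nc} (so $h=(m-\Lo)^{-1}g\in D(\Lo)$ with $\partial_nh=0$), and the product structure of $\mu$. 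So for $g\in\Ran\Pv\cap\Lmuz$ with $\nrm{g}=1$ we again have $\nrm{B^*g}^2=2\nrm[\mux]{\nabla_x^2h}^2$.

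Next, fix $\theta\in(0,1)$ and apply \eqref{eq:hessian-nc}, together with $\nrm[\mux]{\nabla_xh}^2=\ip[\mux]{h}{-\Lo h}$ (Green formula, Neumann condition). This gives
\[
\nrm{B^*g}^2\le\frac{2}{1-\theta}\Big(\nrm[\mux]{\Lo h}^2+K_\theta\,\ip[\mux]{h}{-\Lo h}\Big).
\]
The spectral-calculus step is identical to Step~5 of Lemma~\ref{lem:corrector}. Connectedness gives $\ker(-\Lo)=\mathrm{span}\{1\}$, so \eqref{eq:gap} holds on $L^2_0(\mux)$. The two multipliers are bounded by $\sup_{\lambda\ge m}\lambda^2/(m+\lambda)^2=1$ and $\sup_{\lambda\ge m}\lambda/(m+\lambda)^2=1/(4m)$. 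Hence $\nrm{B^*g}^2\le A(\theta)\nrm[\mux]{g}^2$ for every $\theta$, and taking the infimum over $\theta$ gives $\nrm{B^*g}^2\le A$. Then \eqref{eq:normB} yields \eqref{eq:AmLa-nc}. If the infimum is not attained, we apply the bound for each $\theta$ and pass to the limit; no subtlety arises.

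The remaining claims cost nothing. Bounds \eqref{eq:Am-bound} and \eqref{eq:LaAm-bound} come from the key identity \eqref{eq:key-identity}, which uses only the form identity \eqref{eq:form-identity}, Lemma~\ref{lem:antisym-sob}, and the Poincar\'e inequality. The mean-zero argument in Step~0 uses connectedness to conclude $u=0$ when $\nabla_xu=0$, and connectedness is assumed here. The norm equivalence \eqref{eq:equivalence} follows from \eqref{eq:Am-bound} alone.

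The only point needing care is justifying that the earlier steps carry over on a non-convex $C^{2,1}$ domain. First, $h$ must lie in $D(\Lo)\subset H^2$ so that \eqref{eq:hessian-nc} applies and $\La h$ is specular; Lemma~\ref{lem:neumann-nc} and Remark~\ref{rem:neumann-specular} provide this. Second, extending from $\mathscr C_0$ to $\Lmuz$ uses density via Assumption~\ref{ass:WP}, exactly as before. I expect no genuine obstacle beyond checking this bookkeeping.
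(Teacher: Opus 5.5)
Your proposal is correct and follows the paper's proof essentially verbatim. Steps 0--4 of Lemma~\ref{lem:corrector} carry over, and \eqref{eq:hessian-nc} replaces \eqref{eq:hessian-bound} in the spectral step to give $\nrm{B^*g}^2\le A(\theta)\nrm[\mux]{g}^2$ for each $\theta$. The infimum over $\theta$ together with the duality bound \eqref{eq:normB} then yields \eqref{eq:AmLa-nc}.

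Two minor misattributions: \eqref{eq:Am-bound} uses only $a^2+b^2\ge 2ab$ and $m>0$ applied to \eqref{eq:key-identity}, not the Poincar\'e inequality. Also, the connectedness argument giving $u=0$ sits in Step~1, not Step~0.
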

\begin{proof}
Steps~0--4 of the proof of Lemma~\ref{lem:corrector} are unchanged.
Indeed, they use only antisymmetry, the form identity
\eqref{eq:form-identity}, connectedness of $\Omega$, and Gaussian moment
computations.
Let
\[
B:=\Am\La(1-\Pv).
\]
For $g\in\Ran\Pv\cap\Lmuz$ with $\nrm g=1$, set
\[
h:=(m-\Lo)^{-1}g\in D(\Lo)\cap L^2_0(\mux).
\]
The computation \eqref{eq:Bstar-norm} still gives
\[
\nrm{B^*g}^2
=
2\nrm[\mux]{\nabla_x^2h}^2.
\]
For every $\theta\in(0,1)$, Proposition~\ref{prop:hessian-nc}
together with $\nrm[\mux]{\nabla_xh}^2=\ip[\mux]{h}{-\Lo h}$
yields
\[
\nrm{B^*g}^2
\le
\frac{2}{1-\theta}
\left(
\nrm[\mux]{\Lo h}^2
+
K_\theta
\ip[\mux]{h}{-\Lo h}
\right).
\]
Let $(E_\lambda)$ be the spectral resolution of $-\Lo$ on
$L^2_0(\mux)$. Since
\[
\Spec\left(-\Lo|_{L^2_0(\mux)}\right)\subset[m,\infty),
\]
and $h=(m-\Lo)^{-1}g$, we obtain
\[
\nrm{B^*g}^2
\le
\frac{2}{1-\theta}
\int_m^\infty
\frac{\lambda^2+K_\theta\lambda}{(m+\lambda)^2}
\,\rmd\nu_g(\lambda).
\]
Using
\[
\sup_{\lambda\ge m}
\frac{\lambda^2}{(m+\lambda)^2}
=1,
\qquad
\sup_{\lambda\ge m}
\frac{\lambda}{(m+\lambda)^2}
=
\frac1{4m},
\]
we get
\[
\nrm{B^*g}^2
\le
\frac{2}{1-\theta}
\left(
1+\frac{K_\theta}{4m}
\right)
\nrm[\mux]{g}^2
=
A(\theta)\nrm[\mux]{g}^2.
\]
Since this holds for every $\theta\in(0,1)$,
\[
\nrm{B^*g}^2\le A\nrm[\mux]{g}^2.
\]
The duality argument of Step~2 in the proof of
Lemma~\ref{lem:corrector} therefore gives \eqref{eq:AmLa-nc}.
The remaining corrector bounds do not use the second-order estimate and
remain unchanged.
\end{proof}

We have all the ingredients to show the explicit decay rate for the underdamped Langevin dynamics with specular reflection on a non-convex domain in the following theorem. 

\begin{theorem}[Decay rate on a non-convex domain]
\label{thm:main-nc}
Let $\Omega\subset\R^d$ be a bounded connected domain with $C^{2,1}$
boundary satisfying Assumption~\ref{ass:semiconvex} with constant
$\sigma_0\ge0$. Let $U\in C^2(\overline\Omega)$ and let $\mu$ be the
Gibbs measure \eqref{eq:gibbs}. Assume:
\begin{enumerate}
\item[(i)] $\mux$ satisfies the Poincar\'e inequality
\eqref{eq:poincare} with constant $m>0$;
\item[(ii)] $\nabla^2U\succeq-K\,\mathrm{Id}$ on $\Omega$ for some
$K\ge0$;
\item[(iii)] Assumption~\ref{ass:WP} holds.
\end{enumerate}
Fix a vector field $\mathcal N$ as in
Lemma~\ref{lem:normalfield-nc} and let $A$ be defined by
\eqref{eq:A-def-nc}. With friction coefficient
\begin{equation}\label{eq:gamma-nc}
\gamma
=
2\sqrt m\,\sqrt{A+2},
\end{equation}
the solution
$f(t)=e^{t\Lgen}f_0$
of \eqref{eq:bke} with the specular boundary condition and
$f_0\in\Lmuz$ satisfies, for every $t\ge0$,
\begin{equation}\label{eq:decay-nc}
\nrm{f(t)}
\le
\sqrt3\,e^{-\Lambda_\sigma t}\nrm{f_0},
\qquad
\Lambda_\sigma
=
\frac16
\frac{\sqrt m}{\sqrt A+\sqrt{A+2}}.
\end{equation}
\end{theorem}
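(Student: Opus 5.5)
The plan is to rerun the proof of Theorem~\ref{thm:main} in Section~\ref{sec:proof} verbatim, with the single change that the corrector bound \eqref{eq:AmLa-bound} is replaced by \eqref{eq:AmLa-nc} of Proposition~\ref{prop:corrector-nc}. By Remark~\ref{rem:reilly-nc}, everything else in that proof holds without convexity, only connectedness being needed:
\begin{itemize}
\item the antisymmetry Lemmas~\ref{lem:antisym} and \ref{lem:antisym-sob};
\item the Neumann realization, now via Lemma~\ref{lem:neumann-nc};
\item the energy identity \eqref{eq:energy};
\item the bounds \eqref{eq:Am-bound}, \eqref{eq:LaAm-bound} and the norm equivalence \eqref{eq:equivalence};
\item the spectral computations in Steps~3--4 of Section~\ref{sec:proof}.
\end{itemize}
Assumption~\ref{ass:WP} again supplies the differentiability of $t\mapsto\mathsf L_m(f(t))$ and the density argument. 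So Steps~1--4 give \eqref{eq:dLm}, \eqref{eq:uncorrected}, \eqref{eq:Ls-first}, \eqref{eq:Ls-term} and \eqref{eq:coercive-term} unchanged. In \eqref{eq:err1} the factor $\sqrt{2+K/(2m)}$ becomes $\sqrt A$. Step~5 then yields the matrix bound \eqref{eq:matrix} with
\[
\zeta=\frac{\gamma}{2\sqrt m}+\sqrt A .
\]

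The only real work is to redo the optimization of Step~6 explicitly, since we can no longer cite \cite{FLL26} verbatim for a general constant. Write $c:=\sqrt A\ge\sqrt2$ and $s:=\sqrt{A+2}$. The choice \eqref{eq:gamma-nc} gives $\gamma=2\sqrt m\,s$, hence $\zeta=s+c$ and
\[
2+\zeta^2=2+s^2+c^2+2cs=2s(s+c),
\]
using $s^2=c^2+2$. We take $\varepsilon=\gamma/(2+\zeta^2)=\sqrt m/(s+c)$. Since $s+c\ge\sqrt2+2$, this gives $\varepsilon\le\sqrt m/2$, so \eqref{eq:equiv-numbers} holds again.

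Next we show $\lambda_{\min}(M_{\gamma,\varepsilon})\ge\varepsilon/4$, i.e.\ that $M_{\gamma,\varepsilon}-\frac{\varepsilon}{4}\mathrm{Id}$ is positive semidefinite.
\begin{itemize}
\item Its lower diagonal entry is $\varepsilon/4>0$.
\item Its upper diagonal entry $\gamma-5\varepsilon/4$ is positive, because $\gamma=\varepsilon(2+\zeta^2)$.
\item Its determinant is nonnegative iff $(\gamma-\tfrac54\varepsilon)\tfrac{\varepsilon}{4}\ge\tfrac{\varepsilon^2\zeta^2}{4}$, i.e.\ iff $\gamma\ge\varepsilon(\zeta^2+\tfrac54)$. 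This holds because $\gamma=\varepsilon(\zeta^2+2)$.
\end{itemize}
Hence $\lambda_{\mathrm{coer}}:=\varepsilon/4=\sqrt m/(4(c+s))$. As a consistency check, in the convex case $A=2+K/(2m)$ this reproduces $\gamma_*$, $\varepsilon_*$ and \eqref{eq:lam-coer}.

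Finally, exactly as in \eqref{eq:final-coercivity}, the velocity Poincar\'e inequality \eqref{eq:gauss-poincare} and \eqref{eq:equiv-numbers} give
\[
\mathscr D_\varepsilon(f)\ge\lambda_{\mathrm{coer}}\nrm f^2\ge\tfrac43\lambda_{\mathrm{coer}}\mathsf L_m(f)
\]
on $\mathscr C_0$. This extends to $D(\Lgen)$ via the core property. Gr\"onwall's lemma with $\Lambda_\sigma=\frac23\lambda_{\mathrm{coer}}=\frac16\sqrt m/(\sqrt A+\sqrt{A+2})$, followed by the two-sided bound \eqref{eq:equiv-numbers}, gives \eqref{eq:decay-nc}, first for $f_0\in\mathscr C_0$ and then for all $f_0\in\Lmuz$ by density and contractivity.

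The main obstacle is not analytic but bookkeeping. One must check that every use of convexity in Sections~\ref{sec:setting}--\ref{sec:proof} is either confined to \eqref{eq:hessian-bound}, now replaced via Proposition~\ref{prop:hessian-nc}, or is an elliptic-regularity input covered by Lemma~\ref{lem:neumann-nc}. One must also verify the algebraic identity $2+\zeta^2=2s(s+c)$, which is what makes the rate formula close up in the stated form.
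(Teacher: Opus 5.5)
Your proposal is correct and follows the paper's proof essentially line by line. You replace $\sqrt{2+K/(2m)}$ by $\sqrt A$ in \eqref{eq:err1}, take $\gamma=2\sqrt m\sqrt{A+2}$ and $\varepsilon=\gamma/(2+\zeta^2)=\sqrt m/(\sqrt A+\sqrt{A+2})$ via the identity $2+\zeta^2=2\sqrt{A+2}\,(\sqrt A+\sqrt{A+2})$, and certify $\lambda_{\min}(M_{\gamma,\varepsilon})\ge\varepsilon/4$ from the diagonal entries and determinant of $M_{\gamma,\varepsilon}-\frac{\varepsilon}{4}\mathrm{Id}$, exactly as the paper does; your reduction of the determinant condition to $\gamma\ge\varepsilon(\zeta^2+\frac54)$ is just a tidier form of the paper's explicit value $3m/(16p^2)$.
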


\begin{proof}
Steps~1--5 in the proof of Theorem~\ref{thm:main} remain unchanged,
except that the corrector estimate \eqref{eq:AmLa-bound} is replaced by
\eqref{eq:AmLa-nc}. Thus the constant
$\sqrt{2+K/(2m)}$ in \eqref{eq:err1} is replaced by $\sqrt A$.
Consequently,
\[
\zeta
=
\frac{\gamma}{2\sqrt m}
+
\sqrt A,
\]
and
\[
\mathscr D_\varepsilon(f)
\ge
\begin{pmatrix}
\alpha&\beta
\end{pmatrix}
M_{\gamma,\varepsilon}
\begin{pmatrix}
\alpha\\
\beta
\end{pmatrix},
\qquad
M_{\gamma,\varepsilon}
=
\begin{pmatrix}
\gamma-\varepsilon
&
-\dfrac{\varepsilon\zeta}{2}
\\[3pt]
-\dfrac{\varepsilon\zeta}{2}
&
\dfrac{\varepsilon}{2}
\end{pmatrix},
\]
where
\[
\alpha:=\nrm{\nabla_vf},
\qquad
\beta:=\nrm{f_S}.
\]
Set
\[
s:=\sqrt A,
\qquad
u:=\sqrt{A+2},
\qquad
p:=s+u.
\]
Then $u^2=s^2+2$. Choose
$\gamma_*:=2\sqrt m\,u$,
and as in Section~\ref{sec:proof},
\[
\varepsilon_*
:=
\frac{\gamma_*}{a(\gamma_*)},
\qquad
a(\gamma):=2+\zeta^2.
\]
At $\gamma=\gamma_*$,
$\zeta=u+s=p$.
Moreover,
\[
2up
=
2us+2u^2
=
2us+2s^2+4
=
p^2+2.
\]
Therefore,
\[
a(\gamma_*)
=
p^2+2
=
2up,
\]
and hence,
\begin{equation}\label{eq:eps-nc}
\varepsilon_*
=
\frac{2\sqrt m\,u}{2up}
=
\frac{\sqrt m}{p}
=
\frac{\sqrt m}{\sqrt A+\sqrt{A+2}}.
\end{equation}
The entries of $M_{\gamma_*,\varepsilon_*}$ are
\[
M_{11}
=
\frac{\sqrt m}{p}(2up-1),
\qquad
M_{22}
=
\frac{\sqrt m}{2p},
\qquad
M_{12}
=
-\frac{\sqrt m}{2}.
\]
Since $A\ge2$, we have $u\ge2$ and $p\ge2+\sqrt2$. In particular,
\[
M_{11}-t
=
\frac{\sqrt m}{p}
\left(
2up-\frac54
\right)
>0,
\qquad
M_{22}-t
=
\frac{\sqrt m}{4p},
\]
where $t:=\frac{\sqrt m}{4p}$.
Using $2up-p^2=2$,
\begin{align*}
\det
\left(
M_{\gamma_*,\varepsilon_*}-t\,\mathrm{Id}
\right)
&=
\frac{m}{4p^2}
\left(
2up-\frac54-p^2
\right)
=
\frac{3m}{16p^2}
>0.
\end{align*}
Since the symmetric matrix $M_{\gamma_*,\varepsilon_*}-t\,\mathrm{Id}$ has
positive diagonal entries and positive determinant, it is positive
definite, i.e.
\begin{equation}\label{eq:lambda-coer-nc}
\lambda_{\min}
\left(
M_{\gamma_*,\varepsilon_*}
\right)
\ge
\frac{\sqrt m}
{4\left(\sqrt A+\sqrt{A+2}\right)}
=:
\lambda_{\mathrm{coer}}.
\end{equation}
Furthermore, $p>2$ implies
\[
\varepsilon_*<\frac{\sqrt m}{2}<\sqrt m,
\]
so \eqref{eq:equivalence} yields
\[
\frac14\nrm f^2
\le
\mathsf L_m(f)
\le
\frac34\nrm f^2.
\]
The remainder of Step~6 in the proof of
Theorem~\ref{thm:main} is unchanged. In particular,
\[
\mathscr D_{\varepsilon_*}(f)
\ge
\frac{4\lambda_{\mathrm{coer}}}{3}
\mathsf L_m(f),
\]
and therefore
\[
\frac{\rmd}{\rmd t}\mathsf L_m(f(t))
\le
-2\Lambda_\sigma
\mathsf L_m(f(t)),
\qquad
\Lambda_\sigma
=
\frac23\lambda_{\mathrm{coer}}.
\]
Using \eqref{eq:lambda-coer-nc},
\[
\Lambda_\sigma
=
\frac16
\frac{\sqrt m}{\sqrt A+\sqrt{A+2}}.
\]
Gr\"onwall's lemma and the norm equivalence then give
\[
\nrm{f(t)}
\le
\sqrt3\,e^{-\Lambda_\sigma t}\nrm{f_0}.
\]
The extension from the core to arbitrary $f_0\in\Lmuz$ follows by
density and strong continuity exactly as in
Theorem~\ref{thm:main}.
\end{proof}

\begin{remark}
When $\sigma_0=0$ in Theorem~\ref{thm:main-nc}, one has
$A=2+\frac{K}{2m}$,
and consequently
\[
\gamma
=
\sqrt{16m+2K},
\qquad
\Lambda_\sigma
=
\frac16
\frac{\sqrt m}
{\sqrt{2+\frac{K}{2m}}+\sqrt{4+\frac{K}{2m}}},
\]
so Theorem~\ref{thm:main-nc} reduces exactly to
Theorem~\ref{thm:main}.
\end{remark}

\begin{remark}[The matrix estimate is self-contained]
\label{rem:step6-selfcontained}
The preceding computation directly verifies
\[
\lambda_{\min}
\left(
M_{\gamma_*,\varepsilon_*}
\right)
\ge
\frac{\sqrt m}
{4\left(\sqrt A+\sqrt{A+2}\right)}
\]
for every $A\ge2$. In the special case
$A=2+K/(2m)$ it reproduces the coercivity estimate
\eqref{eq:lam-coer} used in the proof of
Theorem~\ref{thm:main}. Thus the final $2\times2$ matrix estimate does
not require a separate appeal to the optimization calculation of
\cite{FLL26}.
\end{remark}

\begin{remark}[Convergence of the law]
\label{rem:divergences-nc}
The conclusions of Corollary~\ref{cor:divergences} remain valid on the
non-convex domain, with $\Lambda$ replaced everywhere by
$\Lambda_\sigma$.
\end{remark}

\begin{remark}[Choosing $\theta$]\label{rem:theta-nc}
Write
$P
:=
1+
\frac{K+\sigma_0\Theta_{\mathcal N}}{4m}$
and
$Q
:=
\frac{\sigma_0^2}{4m}$.
Then $A(\theta)$ (defined in \eqref{eq:A-def-nc}) takes the form:
\[
A(\theta)
=
\frac{2}{1-\theta}
\left(
P+\frac Q\theta
\right).
\]
When $\sigma_0>0$, the unique minimizer in $(0,1)$ is given by
\begin{equation}\label{eq:theta-star-nc}
\theta_*
=
\frac{-Q+\sqrt{Q^2+PQ}}{P}.
\end{equation}
For an explicit bound that avoids optimization, one may take
$\theta=1/2$. This gives
\begin{equation}\label{eq:A-half-nc}
A
\le
4+
\frac{
K+2\sigma_0^2+\sigma_0\Theta_{\mathcal N}
}{m},
\end{equation}
and therefore,
\begin{equation}\label{eq:lambda-simple-nc}
\Lambda_\sigma
\ge
\frac{\sqrt m}{12\sqrt{A+2}}
\ge
\frac1{12}
\frac{m}
{\sqrt{
6m+K+2\sigma_0^2+\sigma_0\Theta_{\mathcal N}
}}.
\end{equation}
\end{remark}

\section{Conclusion}\label{sec:conclusion}

In this paper, we studied the underdamped Langevin dynamics confined to a bounded domain $\Omega\subset\R^d$ by specular reflection at the boundary, with invariant measure $\mu(\rmd x\,\rmd v)\propto e^{-U(x)-|v|^2/2}\,\rmd x\,\rmd v$ on $\Omega\times\R^d$. Assuming that the position marginal $\mux\propto e^{-U(x)}\rmd x$ satisfies a Poincar\'e inequality on $\Omega$ with constant $m>0$,  $\nabla^2U\succeq -K\,\mathrm{Id}$ on $\Omega$ for some $K\ge0$ and the domain $\Omega$ is convex, we extended the modified $L^2$ hypocoercivity method of Fan--Li--Lu \cite{FLL26}, based on the gap-shifted corrector \eqref{eq:gap-corrector}
to this boundary-value setting. Three structural observations allowed us to obtain our results in the constrained domain with a specular reflection:
\begin{itemize}
\item[(i)] the specular symmetry class renders the transport operator $\La$ antisymmetric;
\item[(ii)] the overdamped infinitesimal generator $\Lo$ entering the corrector is canonically realized with \emph{Neumann} boundary conditions, and this Neumann condition is precisely what makes all boundary terms in the corrector estimates vanish;
\item[(iii)] the Bochner identity used in the whole-space argument is replaced by a weighted Reilly formula, whose boundary contribution involves the second fundamental form of $\partial\Omega$ and is nonnegative for convex $\Omega$.
\end{itemize}
As a consequence, with friction $\gamma=\sqrt{16m+2K}$ we obtained the explicit decay rate 
$
\Lambda
$ (Theorem~\ref{thm:main}), which is $O(\sqrt m)$ when $U$ is convex; this scaling is optimal in the constrained setting, including the pure reflection case $U\equiv 0$ (Proposition~\ref{prop:sharp}). This yields exponential convergence of the law in $\chi^2$-divergence, total variation, and Wasserstein distance for the position marginal (Corollary~\ref{cor:divergences}).
The explicit decay rate $\Lambda$ for the convex domain $\Omega$ is the same as that in the unconstrained setting of \cite{FLL26}. Finally, we extended our results to the setting where the domain $\Omega$ is non-convex. We obtained an explicit contraction rate that depends on the domain (Theorem~\ref{thm:main-nc}).

\section*{Acknowledgments}
The authors would like to thank Andreas Eberle and Francis L\"{o}rler for helpful comments.
Qi Feng is partially supported by the grant DMS-2420029.
Lingjiong Zhu is partially supported by the grants NSF DMS-2053454 and DMS-2208303.

\bibliographystyle{plain}
\bibliography{langevin}

\bigskip
\noindent\textsc{Department of Mathematics and Computer Science, Fisk University, Nashville, TN}\\
\emph{Email address}: \texttt{hdu@fisk.edu}
\medskip
\noindent\textsc{Department of Mathematics, Florida State University, Tallahassee, FL}\\
\emph{Email address}: \texttt{qfeng2@fsu.edu}
\medskip
\noindent\textsc{Department of Mathematics, Florida State University, Tallahassee, FL}\\
\emph{Email address}: \texttt{lzhu2@fsu.edu}

\end{document}